\DeclarePairedDelimiter\ceil{\lceil}{\rceil}
\DeclarePairedDelimiter\floor{\lfloor}{\rfloor}
\journal{}
 \newtheorem{theorem}{Theorem}[section]
 \newtheorem{corollary}[theorem]{Corollary}
 \newtheorem{lem}[theorem]{Lemma}
 \newtheorem{proposition}[theorem]{Proposition}
 \newtheorem{definition}[theorem]{Definition}
 \newtheorem{remark}[theorem]{Remark}
 \newtheorem{example}[theorem]{Example}
\newcommand{\om}{\textnormal{\textbf{o}}}
\newcommand {\im}{ \mathop {\rm Im} \nolimits }
\begin{document}

\begin{frontmatter}

\title{A generalized metric-type structure with some applications}




\author[mymainaddress]{Hallowed O. Olaoluwa\corref{mycorrespondingauthor}}
\cortext[mycorrespondingauthor]{Corresponding author}
\ead{holaoluwa@unilag.edu.ng}

\author[mysecondaryaddress]{Aminat O. Ige}
\ead{aminat.ige@lasu.edu.ng}

\author[mymainaddress]{Johnson O. Olaleru}
\ead{jolaleru@unilag.edu.ng}

\address[mymainaddress]{Department of Mathematics, University of Lagos, Nigeria}
\address[mysecondaryaddress]{Department of Mathematics, Lagos State University, Nigeria}

\begin{abstract}
The paper introduces the class of O-metric spaces, a novel generalization of metric-type spaces, classifying almost all possible metric types into upward and downward O-metrics. We list some topologies arising from O-metrics and discuss convergence, sequential continuity, first countability and T$_2$ separation. The topology of an O-metric space can be generated by an upward O-metric on the space hence the focus on upward O-metric spaces. A theorem on the existence and uniqueness of a fixed point of some contractive-like map is proved and related with some other well known fixed point results in literature. Applications to the estimation of distances, polygon inequalities, and optimization of entries in infinite symmetric matrices  are also given.\end{abstract}

\begin{keyword}
Metric-type \sep O-metric \sep  $a$-upward metric \sep $a$-downward metric \sep triangle $\om$-inequality \sep O-metric topology \sep b-metric
\MSC[2010] 54E99 \sep 54E350 
\end{keyword}

\end{frontmatter}


\section{Introduction}

The concept of metric spaces is fundamental in analysis, and many authors have worked around it to arrive at abridged concepts such as partial metric spaces, multiplicative metric spaces, b-metric spaces, etc. 
In several generalization attempts, the  axioms of a metric space have been perturbed to derive other spaces. In particular, the triangle inequality condition of a metric space is quite adjustable to accommodate other binary operations. One of such modifications (replacing the additive operation with a scaled addition) resulted in the concept of b-metric spaces (see \cite{Bakhtin1989,Czerwik1993}) defined below:

\begin{definition} \label {(b-Metric)}\textup{\cite{Czerwik1993}} 
Let $X$ be a non-empty set, and $s \geq 1$ a given positive real number. Suppose that the mapping $d: X\times X\longrightarrow [0,\infty)$ satisfies:\\
$(d_1)$ $d(x,y)=0$ if and only if $x=y;$\\
$(d_2)$ $d(x,y)=d(y,x)$ for all $x,y \in X;$\\
$(d_3)$ $d(x,y) \leq s[d(x,z)+d(z,y)]$ for all $x,y,z \in X.$\\
Then, $d$ is called a $b$-metric on $X$ and the triple $(X,d,s)$ is called a $b$-metric space.
\end {definition}
\noindent
Another modification, obtained by replacing addition with multiplication, resulted in a modification of the self distance axiom in the concept of
multiplicative metric spaces introduced in \cite{mult} as follows:
\begin{definition}\textup{\cite{mult}}
Let $X$ be a non-empty set. A function $d^*:X \times X\rightarrow [1,\infty)$ is a multiplicative metric on $X$ if it satisfies for all $x,y,z \in X$ the following:
\\
(i) $d^*(x,y) = 1$ $\Longleftrightarrow x=y$; \\
(ii) $d^*(x,y) = d^*(y,x)$; \\
(iii) $d^*(x,z)\leq d^*(x,y) d^*(y,z)$  (Multiplicative triangle inequality).\\
The pair $(X,d^*)$ is called a multiplicative metric space.
\end{definition}
\noindent
\noindent
In fact, authors in \cite{mgraph} introduced the concept of b-multiplicative metric spaces by combining the notions of b-metrics and multiplicative metrics.
However, while b-metric spaces proved to be a useful generalization of metric spaces, it is easy to note that multiplicative metric spaces are essentially metric spaces, and b-multiplicative metric spaces are b-metric spaces (see \cite{survey,Ige2021}). 
With the possibility of a generalization of the triangle inequality to accommodate many other operations, Khojasteh \textit{et al.} \cite{thetamet} introduced $\theta$-metric spaces as follows:

\begin{definition}\textup{\cite{thetamet}\label{action}
Let $ \theta: [0, \infty) \times [0, \infty) \to [0, \infty)$ be a continuous mapping with respect to each variable, such that: \\
(i) $\theta(0,0) = 0$ and $\theta(u,v) = \theta(v,u)$ for all ${u}, {v} \ge 0,$\\
(ii) $\theta(w,t) < \theta ({u},{v})$ if either $w<u$ and  $t \leq v$ or $w \leq u$ and  $t<v$,\\
(iii) for each $r \in \im (\theta)$ and for each $v \in [0,r],$ there exists ${u} \in [0,r]$ such that $\theta(u,v) = r,$ (here, $ \im (\theta) = \left \{\theta({u},{v}): {u} \ge 0, {v}\ge 0 \right \}$)\\
(iv) $\theta(u,0)\le u,$ for all $u>0.$ \\
%
Such a function $\theta$ is called a B-action. Let $X$ be a non-empty set. A mapping $d_{\theta}: X \times X \longrightarrow [0, \infty$) is called a $\theta-metric$ on ${X}$ with respect to the $\theta$ if $d_{\theta}$ satisfies the following: \\
$(A_1)$ $d_{\theta}(x,y)=0$ if and only if $x=y;$\\
$(A_2)$ $d_{\theta}(x,y)=d_{\theta}(y,x)$, for all $x,y \in X;$\\
$(A_3)$ $d_{\theta}(x,y)\leq \theta(d_{\theta}(x,z), d_{\theta}(z,y))$ for all $x,y,z \in X.$\\
A pair $(X, d_{\theta})$ is called a $\theta$-metric space. 
}
\end {definition}
\noindent
The concepts of b-metrics, multiplicative metrics and b-multiplicative metrics are however not captured by the properties of $\theta$-metrics and B-actions. Indeed, while a metric space is a $\theta$-metric space with $\theta(u,v)= u+v,$ for all $u,v \geq 0$, b-metrics with optimal scalar constant $s>1$ cannot be $\theta$-metric spaces because condition (iv) in Definition \ref{action}  fails, neither can multiplicative metrics be $\theta$-metrics because condition $(A_1)$ of Definition \ref{action} fails. This is a gap that needs to be filled which necessitates this study. Of recent, two notions were introduced as generalization of b-metric spaces: $p$-metric spaces (see \cite{pbest}) and controlled metric type spaces (see \cite{discover}).

\begin{definition}\textup{\cite{discover}
Let $X$ be a nonempty set and $\eta:X \times X \to [1,\infty)$. The function $d: X \times X \to [0,\infty)$ is a controlled metric type if
for all $x,y,z \in X$, the following conditions hold:\\
$(\Omega_1)$ $d(x,y)=0$ iff $x=y$,\\
$(\Omega_2)$ $d(x,y)=d(y,x)$,\\
$(\Omega_3)$ $d(x,y) \leq \eta(x,z)d(x,z)+ \eta(z,y)d(z,y)$.\\
In this case, the pair $(X,d)$ is called a controlled metric type space.}
\end{definition}

\begin{definition}\textup{\cite{pbest}\label{pmetric}
Let $X$ be a nonempty set. A function $\tilde{d}:X \times X \to [0,\infty)$ is a $p$-metric
if there exists a strictly increasing continuous function $\Omega:[0,\infty) \to [0,\infty)$
with $t \leq \Omega(t)$ for all $t \geq 0$ such that for all $x,y,z \in X$, the following conditions hold:\\
$(p_1)$ $\tilde{d}(x,y)=0$ iff $x=y$,\\
$(p_2)$ $\tilde{d}(x,y)=\tilde{d}(y,x)$,\\
$(p_3)$ $\tilde{d}(x,z) \leq \Omega(\tilde{d}(x,y)+\tilde{d}(y,z))$.\\
In this case, the pair $(X,\tilde{d})$ is called a $p$-metric space, or, an extended $b$-metric space.}
\end{definition}
\noindent
The class of $p$-metric spaces is larger than the class of $b$-metric spaces, since a $b$-metric is a $p$-metric, when $\Omega(t) = st$ for all $t \geq 0$.  Also, b-metric spaces are controlled metric type spaces, with 
$\eta(x,y)=s ~\forall x,y \in X$. However, these classes do not contain multiplicative metrics.  It will be very interesting to explore the possibility of unifying all the existing metric-type spaces, thus enriching the study of topological and analytical concepts including fixed point results (\cite{Olaleru2009}, \cite{Olaoluwa2015}, \cite{Olaoluwa2016}), and applications in inequalities, approximation, differential equations, etc.


\section{O-metric spaces: topological structure}

The abundance of metric-type spaces, especially metric spaces, multiplicative metric spaces and b-metric spaces leads to the generalization discussed below. In this sequel, $\mathbb{R}_+$ denotes the interval of real numbers $[0,\infty)$.

\begin{definition}\label{psimetric}
Let $X$ be a non-empty set and $\om:I_a \times I_a \to \mathbb{R}_+$ a function, where $I_a$ is an interval in $\mathbb{R}_+$ containing $a \in \mathbb{R}_+$. A function $d_\om: X \times X \to I_a$ is said to be an O-metric on $X$ and $(X,d_\om,a)$ an O-metric space,\footnote{For more precision, $X$ is also said to be an $\om$-metric space, and $d_\om$ an $\om$-metric so that the class of O-metric spaces represents the class of $\om$-metric spaces for some map $\om$.}
 if for all $x,y,z \in X$ the following hold:
\begin{itemize}
\item[(i)] $d_\om(x,y)=a$ if and only if $x=y$;
\item[(ii)] $d_\om(x,y)=d_\om(y,x)$;
\item[(iii)] $d_\om(x,z) \leq \om(d_\om(x,y),d_\om(y,z))$  (triangle $\om$-inequality). \footnote{When convenient, one can consider $\om$ as a binary operation on $\mathbb{R}_+$ and adopt the notation $d_\om(x,z) \leq d_\om(x,y) ~\om ~d_\om(y,z)$ for the triangle $\om$-inequality.}
\end{itemize}
\end{definition}
\noindent
We distinguish two main classes of O-metric spaces in the following definition:
\begin{definition}\label{classif} Let $(X,d_\om,a)$ an O-metric space, where $d_\om: X \times X \to I_a$ is an O-metric on the non-empty set $X$, with $\om:I_a \times I_a \to \mathbb{R}_+$ a function and $I_a$ an interval in $\mathbb{R}_+$ containing $a \in \mathbb{R}_+$. The O-metric space $(X,d_\om,a)$ is said to be:
\begin{itemize}
\item[(i)] an $a$-upward (or upward) O-metric space if $I_a \subset [a,\infty)$;
\item[(ii)] an $a$-downward (or downward) O-metric space if  $I_a \subset [0,a]$.
\end{itemize}
\end{definition}
\noindent
Without loss of generality, we take $I_a=[a,\infty)$ for $a$-upward O-metric spaces and $I_a=[0,a]$ for $a$-downward O-metric spaces. Indeed, if $I_a \subset [a,\infty)$, any $a$-upward (resp. $a$-downward) O-metric space $(X,d_\om,a)$ is also $a$-upward (resp. $a$-downward) with $d_{\tilde{\om}}=d_\om$, where $\tilde{\om}:[a,\infty) \times [a,\infty) \to [a,\infty)$ (resp. $\tilde{\om}:[0,a] \times [0,a] \to [0,a]$) is such that $\tilde{\om}=\om$ on $I_a \times I_a$.
\begin{remark}\label{classes} 
\textup{The class of (upward) O-metric spaces include known metric-type spaces like metric spaces, multiplicative metric spaces, and b-metric spaces:
\begin{enumerate}
\item If $a=0$, $I_0=[0,\infty)$ and $\om(u,v)=u+v$ for all $u,v \geq 0$, $(X,d_\om,a)$ is the metric space \footnote{A metric space is therefore a $0$-upward O-metric space.} and denoted by $(X,d_+,0)$ or simply $(X,d)$.
\item
If $a=0$, $I_0=[0,\infty)$ and $\om(u,v)=s(u+v)$ for all $u,v \geq 0$ and for some $s \geq 1$, $(X,d_\om,a)$ is the b-metric space. 
\item
If $a=1$, $I_1=[1,\infty)$ and $\om(u,v)=uv$ for all $u,v \geq 1$, $(X,d_\om,a)$ is the multiplicative metric space and is denoted by $(X,d_\times,1)$.
 \item
If $a=1$, $I_1=[1,\infty)$ and $\om(u,v)=(uv)^s$ for all $u,v \geq 1$ and for some $s \geq 1$, $(X,d_\om,a)$ is the b-multiplicative metric space.
\item
If $a=0$, $I_0=[0,\infty)$ and $\om(u,v)=\max\{u,v\}$ for all $u,v \geq 0$, $(X,d_\om,a)$ is an ultrametric space  (see \cite{ultra}).
\item
If $a=0$, $I_0=[0,\infty)$ and $\om$ is a B-action $\theta$, $(X,d_\om,a)$ is a $\theta$-metric space.
\item
If $a=0$, $I_0=[0,\infty)$ and $\om(u,v)=\Omega(u+v)$ for all $u,v \geq 0$, where $\Omega:[0,\infty) \to [0,\infty)$ is a strictly increasing continuous function with $t \leq \Omega(t)$ for all $t \geq 0$, $(X,d_\om,a)$ is a $p$-metric space.
\item
A controlled metric type  space $(X,d)$ is an O-metric space provided that $\forall t \geq 0$, $\eta(d^{-1}(t))$ is bounded, with $a=0$, $I_0=[0,\infty)$ and $\om(u,v)=\alpha(u)u+\alpha(v)v$ for all $u,v \geq 0$, where $\alpha:\mathbb{R}_+ \to [1,\infty)$ is defined by $\alpha(t)=\sup\{\eta(x,y): ~ d(x,y)=t\}$.
\end{enumerate}
}
\end{remark}
\noindent
The following are examples of O-metrics which are not known in literature:
\begin{example}\label{example2}
$X=\mathbb{R}$ with $d_\om(x,y)=\ln(1+|x-y|)$ for all $x,y \in X$, is an $a$-upward O-metric space where $a=0$, $I_0=[0,\infty)$ and $\om(u,v)=
(u+1)(v+1)$ for all $u,v \in I_0$. 
\end{example}
\begin{example}\label{genl}
In fact, for any function $\om:[a,\infty) \times [a,\infty) \to [a,\infty)$ increasing on each variables, and such that $\om(a,a)=a$, $u \leq \om(u,a)$  and $\om(u,v)=\om(v,u)$ for all $u,v \in [a,\infty)$, where $a \in \mathbb{R}_+$, the set $\mathbb{R}$ of real numbers is an O-metric space, with $d_{\om}(x,y)=\left\{\begin{array}{lllll} a & \mbox{ if } & x=y, \\ \om(f(x),f(y)) & \mbox{ if } & x\neq y, \end{array}\right.$ where $f(u)=a+|u-a|$ for $u \in \mathbb{R}$.
\end{example}
\begin{example}\label{example1}
$X=\mathbb{R}$ with $d_\om(x,y)=e^{-|x-y|}$ for all $x,y \in X$, is an $a$-downward O-metric space where $a=1$, $I_1=(0,1]$ and $\om(u,v)=\frac{u}{v}$ for all $u,v \in I_1$.
\end{example}
\noindent
The following example is that of an O-metric that is neither an upward or a downward O-metric.
\begin{example}\label{example3}
Let $X=\mathbb{R}$, with $d:X \times X \to (0,\infty)$ defined thus:
\begin{equation*}
d(x,y)=\left\{ \begin{array}{lll} e^{-|x-y|} & \mbox{ if } |x-y| \leq 1; \\ |x-y| & \mbox{ if } |x-y| > 1. \end{array} \right.
\end{equation*}
$(X,d,\om)$ is an O-metric space, with $a=1$, $I_1=(0,\infty)$ and $\om:I_1 \times I_1 \to I_1$ defined thus:
\begin{equation*}
\om(u,v)=\left\{ \begin{array}{lll} \max\left\{\dfrac{u}{v},-\ln(uv)\right\} & \mbox{ if } ~ 0<u,v \leq 1;
\\
\max\left\{ue^v,-\ln u+v\right\} & \mbox{ if } ~ 0<u\leq 1<v;
\\
\max\left\{\dfrac{e^{-u}}{v}, u-\ln v\right\} & \mbox{ if } ~ 0<v \leq 1<u;
\\
\max\left\{e^{-u+v}, u+v \right\} & \mbox{ if } ~ u,v >1.
\end{array}\right.
\end{equation*}
\end{example}
\noindent
\begin{remark}
If $d_{\om}$ and $d'_{\om}$ are two $\om$-metrics on a set $X$, with $\om:I_a \times I_a \to \mathbb{R}_+$ as in Definition \ref{psimetric}, then $\om(d_\om,d'_\om)$ is also an O-metric on $X$ provided that $\om(a,a)=a$, $\om$ is non-decreasing on both variables, and $(u ~\om ~v) \om (w ~\om ~ z)=(u ~\om~ w) \om (v ~\om~ z)$ for all $u,v,w,z \in I_a$. The last condition occurs whenever $\om$ is associative and commutative.
\end{remark}
\noindent
\subsection{Relating types of O-metric spaces}
\subsubsection{Generating new O-metrics from existing ones}
One can generate an uncountable number of O-metrics from an existing one on a given set $X$. The following proposition holds:
\begin{proposition}\label{unexpected}
Let $X$ be a non-empty set. Consider $a,b \in \mathbb{R}_+$ contained respectively in intervals $I_a$ and $J_b$ of non-negative real numbers. Let  $\theta:\mathbb{R}_+ \to \mathbb{R}_+$  be a non-decreasing function, bijective on $I_a$, with $\theta(a)=b$ and $\theta(I_a)=J_b$, and let $\om_a:I_a \times I_a \to \mathbb{R}_+$ and $\om_b:J_b \times J_b \to \mathbb{R}_+$ be two functions such that the following diagram commutes: 
%
%
\begin{equation}\label{diag}
\begin{CD}
J_b \times J_b @>\om_b>> \mathbb{R}_+\\
@AA\theta \times \theta A   @AA \theta A\\
I_a \times I_a @>\om_a >> \mathbb{R}_+
\end{CD}
\end{equation}
i.e. such that $$\om_b(\theta(u),\theta(v))=\theta(\om_a(u,v)) ~~ \forall u,v \in I_a.$$
Then given functions $d_{\om_a}: X \times X \to I_a$ and $d_{\om_b}: X \times X \to I_b$, with $d_{\om_b}=\theta \circ d_{\om_a}$,
\begin{itemize}
\item[(i)] $(X,d_{\om_b},b)$ is an O-metric space if $(X,d_{\om_a},a)$ is an O-metric space.
\item[(ii)] $d_{\om_b}$ is $b$-upward (resp. $b$-downward) if $d_{\om_a}$ is $a$-upward (resp. $a$-downward).
\item[(iii)] The converses of $(i)$ and $(ii)$ hold if $\theta$ is surjective.
\end{itemize}
\noindent
\end{proposition}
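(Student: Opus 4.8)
For part (i) the plan is a direct check of the three O-metric axioms for $d_{\om_b}=\theta\circ d_{\om_a}$, assuming $(X,d_{\om_a},a)$ is an O-metric space. First one notes that $d_{\om_b}$ really maps into $J_b$, since its values are images under $\theta$ of values of $d_{\om_a}\in I_a$ and $\theta(I_a)=J_b$, while $b=\theta(a)\in J_b$. Axiom (i) for $d_{\om_b}$ then follows from injectivity of $\theta$ on $I_a$ together with $\theta(a)=b$: $d_{\om_b}(x,y)=b$ iff $\theta(d_{\om_a}(x,y))=\theta(a)$ iff $d_{\om_a}(x,y)=a$ (both arguments lying in $I_a$) iff $x=y$. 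Axiom (ii) is immediate from the symmetry of $d_{\om_a}$. For the triangle $\om_b$-inequality, apply the non-decreasing $\theta$ to the triangle $\om_a$-inequality for $d_{\om_a}$ and then commute $\theta$ past $\om_a$ using the hypothesis: $d_{\om_b}(x,z)=\theta(d_{\om_a}(x,z))\le\theta\big(\om_a(d_{\om_a}(x,y),d_{\om_a}(y,z))\big)=\om_b\big(\theta(d_{\om_a}(x,y)),\theta(d_{\om_a}(y,z))\big)=\om_b(d_{\om_b}(x,y),d_{\om_b}(y,z))$.

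For part (ii) a one-line monotonicity argument suffices: if $I_a\subseteq[a,\infty)$ then every $t\in I_a$ has $t\ge a$, so $\theta(t)\ge\theta(a)=b$, hence $J_b=\theta(I_a)\subseteq[b,\infty)$ and $d_{\om_b}$ is $b$-upward; dually, if $I_a\subseteq[0,a]$ then $0\le\theta(t)\le\theta(a)=b$, so $J_b\subseteq[0,b]$ and $d_{\om_b}$ is $b$-downward.

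For part (iii) I would reduce the converses to (i) and (ii) by inverting $\theta$. Since $\theta$ is non-decreasing and bijective on $I_a$, the restriction $\theta|_{I_a}$ is strictly increasing with a non-decreasing inverse $\theta^{-1}\colon J_b\to I_a$, and $d_{\om_a}=\theta^{-1}\circ d_{\om_b}$ because $d_{\om_a}$ has values in $I_a$. Surjectivity of $\theta$ is then invoked to extend $\theta^{-1}$ to a non-decreasing map $\psi\colon\mathbb{R}_+\to\mathbb{R}_+$ with $\psi|_{J_b}=\theta^{-1}$; one verifies $\psi(b)=a$, $\psi(J_b)=I_a$, and that the commuting square of the proposition holds for $(\om_b,\om_a,\psi)$. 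Then parts (i) and (ii), applied with the roles of $(a,I_a,\om_a)$ and $(b,J_b,\om_b)$ interchanged and $\theta$ replaced by $\psi$, deliver both converses at once.

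The step I expect to be the main obstacle is precisely the construction and use of $\psi$ in part (iii). The difficulty is that $\om_a$ need not send $I_a\times I_a$ into $I_a$, so in reflecting the triangle inequality back one must evaluate $\psi$ (i.e.\ "$\theta^{-1}$") at points $\om_a(\cdot,\cdot)$ that may lie outside $I_a$, and one must ensure the extended $\psi$ still intertwines $\om_b$ with $\om_a$ at such points. This is exactly where surjectivity of $\theta$ — as opposed to mere injectivity on $I_a$ — carries the argument, and it is the one place that needs genuine care rather than routine checking; everything else is bookkeeping with monotonicity and the commuting square.
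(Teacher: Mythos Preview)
Your proposal is correct and follows essentially the same route as the paper: direct verification of the three axioms for (i), the obvious monotonicity argument for (ii), and for (iii) constructing a non-decreasing global inverse $\theta^{-1}\colon\mathbb{R}_+\to\mathbb{R}_+$ (your $\psi$) and then applying (i)--(ii) with the roles of $(a,I_a,\om_a)$ and $(b,J_b,\om_b)$ swapped. Your explicit remark that surjectivity of $\theta$ is needed precisely because $\om_a$ may land outside $I_a$---so that the intertwining identity must be checked at points not in $I_a$---is in fact more carefully articulated than in the paper, which simply asserts that the reversed diagram commutes.
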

\begin{proof} 
\item (i) Suppose $(X,d_{\om_a},a)$ is an O-metric space. One can easily check that  $d_{\om_b}$ satisfies the first two axioms of an O-metric. For all $x,y,z \in X$, since diagram (\ref{diag}) is commutative,
%
$$
\begin{array}{lcl}
d_{\om_b}(x,z) = \theta(d_{\om_a}(x,z)) &\leq& \theta(\om_a(d_{\om_a}(x,y),d_{\om_a}(y,z)))
\\
&=& \om_b\left(\theta(d_{\om_a}(x,y)),\theta(d_{\om_a}(y,z))\right)
\\
&=& \om_b(d_{\om_b}(x,y),d_{\om_b}(y,z)).
\end{array}
$$
\noindent
Therefore $d_{\om_b}$ is an O-metric as well.
\item
(ii)
Suppose that $d_{\om_a}$ is an $a$-upward O-metric, i.e. $I_a \subset [a,\infty)$. For all $t \in J_b$, there is $r \in I_a$ such that $t=\theta(r)$. Now, $r \geq a \implies t =\theta(r) \geq \theta(a)=b$ hence $J_b \subset [b,\infty)$. Thus $d_{\om_b}$ is a $b$-upward O-metric. 
Similarly, if $d_{\om_a}$ is $a$-downward O-metric, then $d_{\om_b}$ is a $b$-downward O-metric. 
\item
(iii)
Suppose now that $\theta$ is surjective. Then one can construct a non-decreasing function $\theta^{-1}:\mathbb{R}_+ \to \mathbb{R}_+$, bijective on $J_b$ such that $\theta^{-1}(b)=a$, $\theta^{-1}(J_b)=I_a$, and the diagram obtained by replacing $\theta$ with $\theta^{-1}$ in (\ref{diag}) commutes. Since $d_{\om_a}=\theta^{-1} \circ d_{\om_b}$, it follows from (i) that $(X,d_{\om_a},a)$ is an O-metric space if $(X,d_{\om_b},b)$ is an O-metric space, and from (ii) that $d_{\om_a}$ is $a$-upward (resp. $a$-downward) if $d_{\om_b}$ is $b$-upward (resp. $b$-downward). 
\end{proof}
\noindent
A direct application of Proposition \ref{unexpected} is that positive powers of O-metrics are O-metrics: 
\begin{corollary}
Let $(X,d_\om,a)$ be an O-metric space and $r>0$. Then $(X,d_\phi,a^r)$ is an O-metric space, with $d_\phi=(d_\om)^r$ and $\phi(u,v)=\left(\om\left(u^{\frac{1}{r}},v^{\frac{1}{r}}\right)\right)^{r}$ for all $u,v \in I_a$.
\end{corollary}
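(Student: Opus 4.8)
The plan is to obtain the statement as an immediate instance of Proposition \ref{unexpected}, taking as change-of-scale function $\theta:\mathbb{R}_+\to\mathbb{R}_+$ the power map $\theta(t)=t^r$. First I would verify that $\theta$ meets the hypotheses of that proposition: for $r>0$ the map $t\mapsto t^r$ is strictly increasing on $\mathbb{R}_+$, hence in particular non-decreasing on $\mathbb{R}_+$ and a bijection from any subinterval onto its image. Consequently it is bijective from $I_a$ onto $J_b:=\{t^r:t\in I_a\}$, which is again an interval of non-negative reals; moreover $\theta(a)=a^r=:b$ lies in $J_b$ and $\theta(I_a)=J_b$ by construction. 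The one point deserving a line of justification — and essentially the only place anything could go wrong — is that $\{t^r:t\in I_a\}$ is genuinely an interval, which is clear since the continuous strictly increasing image of an interval is an interval.

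Next I would check that the square \eqref{diag} commutes with $\om_a:=\om$ on $I_a\times I_a$ and $\om_b:=\phi$ on $J_b\times J_b$, where $\phi(u,v)=\bigl(\om(u^{1/r},v^{1/r})\bigr)^{r}$ (to be read as a map $J_b\times J_b\to\mathbb{R}_+$, which is well defined since $u,v\in J_b$ forces $u^{1/r},v^{1/r}\in I_a$ and $\om$ maps $I_a\times I_a$ into $\mathbb{R}_+$). For all $u,v\in I_a$,
\[
\om_b(\theta(u),\theta(v))=\phi(u^{r},v^{r})=\Bigl(\om\bigl((u^{r})^{1/r},(v^{r})^{1/r}\bigr)\Bigr)^{r}=\bigl(\om(u,v)\bigr)^{r}=\theta(\om_a(u,v)),
\]
so diagram \eqref{diag} commutes.

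Finally, observing that $d_\phi=(d_\om)^r=\theta\circ d_\om$ maps $X\times X$ into $J_b=J_{a^r}$, Proposition \ref{unexpected}(i) applies directly and yields that $(X,d_\phi,a^r)$ is an O-metric space. As a bonus, part (ii) of the same proposition then also shows that $d_\phi$ is $a^r$-upward (resp. $a^r$-downward) whenever $d_\om$ is $a$-upward (resp. $a$-downward). The whole argument is a substitution into Proposition \ref{unexpected}; there is no genuine obstacle, the only care needed being the bookkeeping of the domains and codomains and the remark that raising $I_a$ to the power $r$ keeps it an interval containing $a^r$.
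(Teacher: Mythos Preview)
Your proposal is correct and takes exactly the same approach as the paper: the paper's entire proof is the single sentence ``In Proposition \ref{unexpected}, take $\theta(t)=t^r$ for all $t \in \mathbb{R}_+$.'' You have simply filled in the routine verifications (that $\theta$ is non-decreasing and bijective on $I_a$, that $\theta(I_a)$ is an interval, and that the diagram commutes), which the paper leaves implicit.
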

\begin{proof}
In Proposition \ref{unexpected}, take $\theta(t)=t^r$ for all $t \in \mathbb{R}_+$.
\end{proof}

\subsubsection{Upward O-metric spaces and metric spaces}
We state more precisely how to generate upward O-metrics from metrics and vice versa. %
The proofs are obvious hence omitted. 
\begin{proposition}\label{metric to type}
Let $(X,d)$ be a metric space, and $a \in \mathbb{R}_+$. Consider two functions $\theta:[0,\infty) \to [a,\infty)$ and $\om:[a,\infty) \times [a,\infty) \to [a,\infty)$ such that:
\begin{itemize}
\item[$(\theta_1)$] $\theta(t)=a \Longleftrightarrow t=0$;
\item[$(\theta_2)$] $\theta$ is monotone non-decreasing;
\item[$(\theta_3)$] $\theta(t_1+t_2) \leq \om(\theta(t_1),\theta(t_2))$ for all $t_1,t_2 \geq 0$.
\end{itemize}
Then $(X,d_\om,a)$, where $d_\om=\theta \circ d$, is an $a$-upward O-metric space.
\end{proposition}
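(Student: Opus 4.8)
The plan is to verify directly the three defining properties of an O-metric from Definition~\ref{psimetric} for the map $d_\om = \theta \circ d$, together with the $a$-upward condition from Definition~\ref{classif}. Since $\theta$ takes values in $[a,\infty)$ by hypothesis, $d_\om = \theta \circ d$ has range contained in $I_a = [a,\infty)$; this simultaneously shows that $d_\om$ is a well-defined map $X \times X \to I_a$ and that the space it induces is $a$-upward, so nothing further is needed for the classification claim.

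For axiom (i), I would combine $(\theta_1)$ with the corresponding metric axiom: $d_\om(x,y) = \theta(d(x,y)) = a$ holds if and only if $d(x,y) = 0$, which in turn holds if and only if $x = y$. Axiom (ii) is immediate from the symmetry of $d$, since $d_\om(x,y) = \theta(d(x,y)) = \theta(d(y,x)) = d_\om(y,x)$, and it uses nothing about $\theta$ beyond its being a function.

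The only step carrying any content is the triangle $\om$-inequality. Here I would start from the ordinary triangle inequality $d(x,z) \le d(x,y) + d(y,z)$ in $(X,d)$, apply the monotone non-decreasing function $\theta$ to both sides using $(\theta_2)$ to obtain $\theta(d(x,z)) \le \theta\big(d(x,y) + d(y,z)\big)$, and then invoke $(\theta_3)$ with $t_1 = d(x,y)$ and $t_2 = d(y,z)$ to bound the right-hand side by $\om\big(\theta(d(x,y)), \theta(d(y,z))\big)$. Chaining these inequalities yields $d_\om(x,z) \le \om\big(d_\om(x,y), d_\om(y,z)\big)$, as required.

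I do not expect a genuine obstacle; the result is a routine verification, consistent with the authors' remark that the proof is omitted. The one point worth flagging is why the hypotheses are phrased as they are: $\om$ is \emph{not} assumed to be monotone, so one cannot feed the metric triangle inequality into $\om$ componentwise. Instead one must first pass through $\theta$ — which is precisely what $(\theta_2)$ licenses — and only then apply the subadditivity-type estimate $(\theta_3)$; this ordering is the sole subtlety in the argument.
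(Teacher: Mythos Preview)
Your proposal is correct and is precisely the routine verification the authors have in mind; the paper explicitly omits the proof as ``obvious,'' and your argument---checking range, using $(\theta_1)$ for axiom~(i), symmetry of $d$ for axiom~(ii), and chaining the metric triangle inequality through $(\theta_2)$ then $(\theta_3)$ for axiom~(iii)---is the natural and essentially unique way to fill it in.
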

\noindent
The following proposition establishes conditions under which an upward O-metric generates a metric in the classical sense:
\begin{proposition}\label{type to metric}
Let $(X,d_\om,a)$ be an O-metric space, where $\om:[a,\infty) \times [a,\infty) \to [a,\infty)$, with $a \in \mathbb{R}_+$.  $X$ is a metric space with metric ${d}:X \times X \to [0,\infty)$ if there exists a monotone non-decreasing function $\lambda:[a,\infty)  \to [0,\infty)$ such that:
\begin{itemize}
\item[$(\lambda_1)$] $\lambda(t)=0 \Longleftrightarrow t=a$;
\item[$(\lambda_2)$] $\lambda(\om(u,v)) \leq \lambda(u)+\lambda(v)$ for all $u,v \geq a$;
\item[$(\lambda_3)$] ${d}=\lambda \circ d_\om$.
\end{itemize}
\end{proposition}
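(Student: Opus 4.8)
The plan is to verify directly that $d=\lambda\circ d_\om$ satisfies the three axioms of a metric on $X$; nonnegativity is automatic since $\lambda$ takes values in $[0,\infty)$. For the coincidence axiom I would chain the equivalences $d(x,y)=0 \Longleftrightarrow \lambda(d_\om(x,y))=0 \Longleftrightarrow d_\om(x,y)=a \Longleftrightarrow x=y$, using $(\lambda_3)$ for the first equivalence, $(\lambda_1)$ for the second, and axiom (i) of Definition \ref{psimetric} for the third. Symmetry is immediate from $(\lambda_3)$ together with axiom (ii) of the O-metric: $d(x,y)=\lambda(d_\om(x,y))=\lambda(d_\om(y,x))=d(y,x)$.

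For the triangle inequality, fix $x,y,z\in X$. Since (after the normalization $I_a=[a,\infty)$ discussed just below Definition \ref{classif}) $d_\om$ is $[a,\infty)$-valued and $\om$ maps $[a,\infty)\times[a,\infty)$ into $[a,\infty)$, the triangle $\om$-inequality and the hypotheses on $\lambda$ give, with all arguments lying in the domain $[a,\infty)$ of $\lambda$,
\[
d(x,z)=\lambda\bigl(d_\om(x,z)\bigr)\le \lambda\bigl(\om(d_\om(x,y),d_\om(y,z))\bigr)\le \lambda\bigl(d_\om(x,y)\bigr)+\lambda\bigl(d_\om(y,z)\bigr)=d(x,y)+d(y,z),
\]
where the first inequality applies the monotone non-decreasing property of $\lambda$ to $d_\om(x,z)\le\om(d_\om(x,y),d_\om(y,z))$, and the second is $(\lambda_2)$.

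The argument is entirely routine; the only point needing any care is ensuring that the monotonicity of $\lambda$ is invoked only on arguments in its domain $[a,\infty)$, which holds because $d_\om$ is $I_a$-valued with $I_a\subseteq[a,\infty)$ and $\om$ has codomain $[a,\infty)$. For this reason the statement (like the companion Proposition \ref{metric to type}) presents no genuine obstacle, which is why the authors mark such proofs as obvious.
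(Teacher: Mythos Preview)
Your proof is correct and is precisely the routine verification the authors had in mind; indeed the paper explicitly omits the proof of this proposition (and of Proposition~\ref{metric to type}) as ``obvious,'' so there is no alternative argument to compare against. Your care in noting that all arguments of $\lambda$ lie in $[a,\infty)$ is the only subtlety, and you handle it correctly.
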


\noindent
Thus, the following equivalence theorem is obtained:

\begin{theorem}\label{metricequiv}
Let $X$ be a non-empty set. Let $d_\om: X \times X \to [a,\infty)$, $\lambda:[a,\infty) \to [0,\infty)$ and $\om:[a,\infty) \times [a,\infty) \to  [a,\infty)$, with $a \in \mathbb{R}_+$, such that:

\begin{itemize}
\item[$(E_1)$] $\lambda$ is increasing, with $\lambda(a)=0$;

\item[$(E_2)$] $\lambda(\om(u,v))=\lambda(u)+\lambda(v) ~ \forall u,v \geq a.$
\end{itemize}
Then $(X,d_\om,a)$ is an upward O-metric space if and only if $(X,\lambda \circ d_\om)$ is a metric space (or equivalently, $(X,d)$ is a metric space if and only if $(X,d_\om,a)$ is an upward O-metric space, with $d_\om=\lambda^{-1} \circ d$).
\end{theorem}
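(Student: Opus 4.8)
The plan is to prove Theorem~\ref{metricequiv} by reducing it to the two preceding propositions together with Proposition~\ref{unexpected}, observing that hypotheses $(E_1)$ and $(E_2)$ are exactly what is needed to make $\lambda$ (and its inverse) serve as the transfer function $\theta$ in the commuting-diagram setup. First I would note that $(E_1)$ says $\lambda:[a,\infty)\to[0,\infty)$ is increasing with $\lambda(a)=0$; since $\lambda$ is increasing it is injective, and one checks from $(E_2)$ (putting $v=a$, using $\lambda(\om(u,a))=\lambda(u)$, so $\om(u,a)$ behaves like a unit) together with monotonicity that $\lambda$ maps $[a,\infty)$ onto $[0,\infty)$ — or, more cheaply, one simply restricts attention to $d=\lambda\circ d_\om$ and $d_\om=\lambda^{-1}\circ d$ on the relevant ranges, exactly as the ``without loss of generality'' remark after Definition~\ref{classif} permits. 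Thus $\lambda$ is a non-decreasing bijection $[a,\infty)\to[0,\infty)$ with $\lambda(a)=0$, and $\lambda^{-1}:[0,\infty)\to[a,\infty)$ is its non-decreasing inverse with $\lambda^{-1}(0)=a$.

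Next, for the forward direction, suppose $(X,d_\om,a)$ is an upward O-metric space. I would verify the hypotheses $(\lambda_1)$–$(\lambda_3)$ of Proposition~\ref{type to metric}: $(\lambda_1)$ is immediate from $(E_1)$ and injectivity of $\lambda$; $(\lambda_3)$ is the definition $d=\lambda\circ d_\om$; and $(\lambda_2)$ follows from $(E_2)$ with equality, hence a fortiori the inequality $\lambda(\om(u,v))\le\lambda(u)+\lambda(v)$. Proposition~\ref{type to metric} then yields that $(X,d)$ is a metric space. For the converse, suppose $(X,d)$ is a metric space and set $\om$ as given and $d_\om=\lambda^{-1}\circ d$. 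I would apply Proposition~\ref{metric to type} with $\theta=\lambda^{-1}$: condition $(\theta_1)$ is $\lambda^{-1}(t)=a\iff t=0$, which follows from $(E_1)$; $(\theta_2)$ is monotonicity of $\lambda^{-1}$; and $(\theta_3)$, namely $\lambda^{-1}(t_1+t_2)\le\om(\lambda^{-1}(t_1),\lambda^{-1}(t_2))$, follows by applying the increasing function $\lambda^{-1}$ to the identity $t_1+t_2=\lambda(\om(\lambda^{-1}(t_1),\lambda^{-1}(t_2)))$ obtained from $(E_2)$ — indeed it holds with equality. Proposition~\ref{metric to type} then gives that $(X,d_\om,a)$ is an $a$-upward O-metric space.

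Finally I would remark that the two directions are genuinely inverse to one another: starting from $d_\om$, forming $d=\lambda\circ d_\om$, and then forming $\lambda^{-1}\circ d$ returns $d_\om$ because $\lambda^{-1}\circ\lambda=\mathrm{id}$ on $[a,\infty)$; symmetrically in the other order. This is precisely the parenthetical equivalence in the statement, and it also follows directly from Proposition~\ref{unexpected}(iii) applied with $\theta=\lambda$ (which is surjective onto $[0,\infty)$), the commuting diagram~\eqref{diag} being exactly the content of $(E_2)$ after identifying $\om_a=\om$ and $\om_b(s,t)=s+t$.

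I expect the only real subtlety — and hence the step to state carefully rather than wave through — is the surjectivity/bijectivity of $\lambda$, i.e.\ making sure $\lambda^{-1}$ is well-defined on all of $[0,\infty)$ so that $d_\om=\lambda^{-1}\circ d$ makes sense for an arbitrary metric $d$; everything else is a mechanical transcription of $(E_1)$–$(E_2)$ into the hypotheses of Propositions~\ref{metric to type} and~\ref{type to metric}. In fact, since the theorem only claims an equivalence of structures on $X$ and the ``without loss of generality'' convention lets us shrink the target interval, one can sidestep the issue entirely by taking $I_a=\lambda^{-1}([0,\infty))$ as the codomain of $d_\om$ throughout, so no separate surjectivity argument is strictly required.
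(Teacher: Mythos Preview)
Your proposal is correct and follows essentially the same route as the paper: both directions are obtained by checking that $(E_1)$ and $(E_2)$ supply the hypotheses of Proposition~\ref{type to metric} (with $\lambda$) and Proposition~\ref{metric to type} (with $\theta=\lambda^{-1}$), and then composing. The paper handles your ``only real subtlety'' in one line by declaring $\lambda:[a,\infty)\to\operatorname{Im}(\lambda)$ bijective and working with $\lambda^{-1}$ on that image, which is exactly your ``sidestep'' option; your extra remark via Proposition~\ref{unexpected} is a nice alternative packaging but not a different argument.
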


\begin{proof} From $(E_1)$, the function $\lambda:[a,\infty) \to Im(\lambda)$ is bijective hence the following holds: $\lambda^{-1}(\lambda(u))=u$ for all $u \geq a$. Suppose that $d_\om$ is an O-metric on $X$. Then from Proposition \ref{type to metric}, $\lambda \circ d_\om$ is a metric on $X$ since conditions $(\lambda_1)-(\lambda_3)$ are satisfied. (particularly, $(\lambda_1)$ is satisfied since $\lambda$ is strictly increasing and $\lambda (a)=0$). Conversely, suppose that $\lambda \circ d_\om$ is a metric on $X$. The function $\theta=\lambda^{-1}$ satisfies conditions $(\theta_1)-(\theta_3)$ of Proposition \ref{metric to type} hence $\theta \circ \lambda \circ d_\om =d_\om$ is an upward O-metric.
\end{proof}
\noindent

\noindent
From Theorem \ref{metricequiv}, one obtains the following corollary which establishes equivalence between metric spaces and multiplicative metric spaces.

\begin{corollary}\textup{\cite{survey}\label{multiplicativem}
$(X,d_\times,1)$ is a multiplicative metric space if and only if $(X,\ln\circ d_\times)$ is a metric space.}
\end{corollary}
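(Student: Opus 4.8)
The plan is to obtain this as a direct specialization of Theorem~\ref{metricequiv}, taking $a=1$, $\lambda=\ln$, and $\om(u,v)=uv$ for all $u,v\in[1,\infty)$. First I would recall, as noted in Remark~\ref{classes}(3), that a multiplicative metric space $(X,d_\times)$ is exactly the $1$-upward O-metric space $(X,d_\times,1)$ associated to the choice $\om(u,v)=uv$ on $I_1=[1,\infty)$: axioms (i) and (ii) of Definition~\ref{psimetric} coincide verbatim with (i) and (ii) of the multiplicative metric, and the triangle $\om$-inequality $d_\times(x,z)\le d_\times(x,y)\,\om\,d_\times(y,z)$ is precisely the multiplicative triangle inequality $d_\times(x,z)\le d_\times(x,y)\,d_\times(y,z)$. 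So the left-hand side of the claimed equivalence is literally the statement that $d_\times$ is an upward O-metric with respect to this $\om$.

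Next I would verify the hypotheses $(E_1)$ and $(E_2)$ of Theorem~\ref{metricequiv} for $\lambda=\ln$ on the domain $[1,\infty)$. For $(E_1)$: $\ln$ is strictly increasing on $[1,\infty)$ and $\ln 1=0$, so $\lambda(a)=0$ holds with $a=1$. For $(E_2)$: the functional equation $\lambda(\om(u,v))=\lambda(u)+\lambda(v)$ becomes $\ln(uv)=\ln u+\ln v$, which holds for all $u,v\ge 1$. One should also note the codomains match up correctly: $d_\times:X\times X\to[1,\infty)$, $\ln:[1,\infty)\to[0,\infty)$, and $\ln$ is a bijection of $[1,\infty)$ onto $[0,\infty)$ with inverse $t\mapsto e^t$, so $d_\times=\exp\circ(\ln\circ d_\times)$ recovers the O-metric from the metric as in the second formulation of the theorem.

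With these checks in place, Theorem~\ref{metricequiv} immediately yields: $(X,d_\times,1)$ is an upward O-metric space (equivalently, a multiplicative metric space) if and only if $(X,\ln\circ d_\times)$ is a metric space. I do not anticipate any real obstacle here; the entire content is the routine verification that $\lambda=\ln$ satisfies $(E_1)$--$(E_2)$ and that the multiplicative triangle inequality is the $\om$-triangle inequality for $\om(u,v)=uv$. The only point requiring a moment's care is the implicit domain restriction: because $d_\times$ takes values in $[1,\infty)$, the identity $\ln(uv)=\ln u+\ln v$ only ever needs to be applied for $u,v\ge1$, so no issue arises from the behaviour of $\ln$ near $0$.
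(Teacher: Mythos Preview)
Your proposal is correct and follows exactly the same route as the paper: specialize Theorem~\ref{metricequiv} with $a=1$, $\om(u,v)=uv$, and $\lambda=\ln$, and verify $(E_1)$--$(E_2)$ via the monotonicity of $\ln$, $\ln 1=0$, and $\ln(uv)=\ln u+\ln v$. The extra remarks you include about the identification with Remark~\ref{classes}(3) and the domain restriction are fine but not needed beyond what the paper's short proof does.
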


\begin{proof}
Taking $a=1$, $\om(u,v)=uv$ for all $u,v \geq 1$, and $\lambda:[1,\infty) \to [0,\infty)$ defined by $\lambda(t)=\ln(t)$ for all $t \geq 1$, $\lambda$ is increasing, $\lambda(1)=0$ and $\lambda(\om(u,v))=\ln(uv)=\ln(u)+\ln(v)=\lambda(u)+\lambda(v)$ for all $u,v \in [1,\infty)$. The conditions $(E_1)$ and $(E_2)$ of Theorem \ref{metricequiv} are satisfied.
\end{proof}

\begin{example}\label{refev}
Given any metric space $(X,d)$ and a bijection $\varphi: \mathbb{R}_+ \to \mathbb{R}_+$ such that $\varphi(0)=0$, the composition $\varphi \circ d$ is an $\om$-metric on $X$, with $\om(u,v)=\varphi(\varphi^{-1}(u)+\varphi^{-1}(v))$, satisfying $(E_1)$ and $(E_2)$, with $\lambda=\varphi^{-1}$. 
\end{example}
\noindent

\subsubsection{Upward O-metric spaces and downward O-metric spaces}

The following proposition states amongst other things,  the relationship between some type of upward O-metrics and downward O-metrics. 
\begin{proposition}\label{summary}
Suppose $(X,d_\om,a)$ is an O-metric space, with $d_\om:X \times X \to I_a$ and $\om: I_a \times I_a \to \mathbb{R}_+$, where $I_a$ be an interval in $[0,\infty)$ containing $a \in \mathbb{R}_+$. Suppose there is a function $\phi: I_a  \times  I_a   \to \mathbb{R}$ such that
\begin{equation}\label{mirror}
w \leq \om(u,v) \Longleftrightarrow \phi(w,v) \leq u, ~\forall u,v,w \in I_a.
\end{equation}
Then,

\begin{itemize}
\item[(a)] The reverse triangle $\bar{\phi}$-inequality $\bar{\phi}(d_\om(x,z),d_\om(z,y)) \leq d_\om(x,y)$ holds for all $x,y,z \in X$, where $\bar{\phi}(w,v)=\max\{\phi(w,v),\phi(v,w)\}$.
\item[(b)] If $\theta:I_a \cup \im(\phi) \to \mathbb{R}_+$ is a continuous function decreasing on $I_a$, then $(X,d_\xi,\theta(a))$ is an O-metric space, with $d_\xi=\theta \circ d_\om$ and for all $u,v \in \im(\theta)$, $\xi(u,v)=\theta(\phi(\theta^{-1}(u),\theta^{-1}(v)))$.
\item[(c)] If, in addition to (b), $\theta([a,\infty)) \subset [0,a]$ with $\theta(a)=a$, then $(X,d_\om,a)$ is an upward O-metric space if and only if $(X,d_\xi,a)$ is a downward O-metric space.
\end{itemize}
\end{proposition}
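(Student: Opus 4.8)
I would structure the proof around three ingredients only — the mirror equivalence (\ref{mirror}), the triangle $\om$-inequality for $d_\om$, and the symmetry of $d_\om$ — treating (a), (b), (c) in that order since (a) is exactly what powers the triangle inequality for $d_\xi$ in (b).

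For (a), fix $x,y,z\in X$ and set $u=d_\om(x,y)$, $v=d_\om(y,z)=d_\om(z,y)$, $w=d_\om(x,z)$, all in $I_a$. The triangle $\om$-inequality for the pair $(x,z)$ through $y$ reads $w\le\om(u,v)$, so (\ref{mirror}) gives $\phi(w,v)\le u$, i.e. $\phi(d_\om(x,z),d_\om(z,y))\le d_\om(x,y)$. Applying the triangle $\om$-inequality instead to the pair $(y,z)$ through $x$, and using $d_\om(y,x)=d_\om(x,y)$, yields $d_\om(z,y)\le\om(d_\om(x,y),d_\om(x,z))$, which (\ref{mirror}) turns into $\phi(d_\om(z,y),d_\om(x,z))\le d_\om(x,y)$. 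Taking the maximum of the two left-hand sides is precisely the reverse triangle $\bar\phi$-inequality.

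For (b), since $\theta$ is (strictly) decreasing on $I_a$ it restricts to a continuous bijection of $I_a$ onto the interval $\theta(I_a)\subseteq\mathbb{R}_+$, which contains $\theta(a)$ and into which $d_\xi=\theta\circ d_\om$ maps; hence $\theta^{-1}$ is defined on $\theta(I_a)$ and $\xi$ is meaningful on $\theta(I_a)\times\theta(I_a)$. Axiom (ii) for $d_\xi$ is the symmetry of $d_\om$; for axiom (i), $d_\xi(x,y)=\theta(a)\iff\theta(d_\om(x,y))=\theta(a)\iff d_\om(x,y)=a\iff x=y$ by injectivity of $\theta$ on $I_a$. For the triangle $\xi$-inequality I would unwind the definition, using $\theta^{-1}\big(\theta(d_\om(x,y))\big)=d_\om(x,y)$ (legitimate since the arguments of $\xi$ are $d_\xi$-values and so lie in $\theta(I_a)$), to get $\xi\big(d_\xi(x,y),d_\xi(y,z)\big)=\theta\big(\phi(d_\om(x,y),d_\om(y,z))\big)$; by part (a) with $y\leftrightarrow z$ relabelled (equivalently, from (\ref{mirror}) applied to the triangle $\om$-inequality for $(x,y)$ through $z$) one has $\phi(d_\om(x,y),d_\om(y,z))\le d_\om(x,z)$, and applying the order-reversing $\theta$ gives $d_\xi(x,z)=\theta(d_\om(x,z))\le\theta\big(\phi(d_\om(x,y),d_\om(y,z))\big)=\xi\big(d_\xi(x,y),d_\xi(y,z)\big)$.

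For (c), the extra hypothesis $\theta(a)=a$ makes the base points of $(X,d_\om,a)$ and $(X,d_\xi,a)$ coincide, so only the codomains are at issue. If $(X,d_\om,a)$ is upward we take $I_a=[a,\infty)$, whence $d_\xi$ is valued in $\theta(I_a)\subseteq\theta([a,\infty))\subseteq[0,a]$, so $(X,d_\xi,a)$ is downward; conversely, if $(X,d_\xi,a)$ is downward then $\theta(I_a)\subseteq[0,a]$, and were there $r\in I_a$ with $r<a$, strict decrease of $\theta$ on $I_a$ together with $\theta(a)=a$ would force $\theta(r)>a$, a contradiction, so $I_a\subseteq[a,\infty)$ and $(X,d_\om,a)$ is upward. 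The one place that needs genuine care is the triangle $\xi$-inequality in (b): one must verify that $\theta^{-1}$ is only ever evaluated on $\theta(I_a)$, and that the monotone reversal $d_\om(x,z)\ge\phi(d_\om(x,y),d_\om(y,z))\Rightarrow\theta(d_\om(x,z))\le\theta(\phi(\cdots))$ is valid for the pair $d_\om(x,z)\in I_a$ and $\phi(d_\om(x,y),d_\om(y,z))\in\im(\phi)$, both lying in the domain $I_a\cup\im(\phi)$ of $\theta$.
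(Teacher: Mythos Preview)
Your proof is correct and follows essentially the same route as the paper: (a) via the mirror equivalence plus symmetry of $d_\om$, (b) by applying the order-reversing $\theta$ to the reverse inequality from (a) to obtain the triangle $\xi$-inequality, and (c) by reading off the codomain containment from the decrease of $\theta$ and $\theta(a)=a$. The subtlety you flag at the end --- that $\theta$ is assumed decreasing only on $I_a$, whereas the reversal step compares a point of $I_a$ with a point $\phi(\cdots)\in\im(\phi)$ that may lie outside $I_a$ --- is genuine, and the paper's own proof glosses over exactly this point; if anything you are being more careful than the original.
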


\begin{proof}
\item
(a)  holds from equation (\ref{mirror}), the symmetry of $d_{\om}$, and the triangle $\om$-inequality.
\item
(b) Suppose $\theta: I_a \cup \im(\phi) \to \mathbb{R}_+$ is a continuous mapping decreasing on $I_a$. Since X is an O-metric space, 
$d_\xi(x,y)=\theta(a) \Longleftrightarrow \theta(d_\om(x,y))=\theta(a) \Longleftrightarrow d_\om(x,y)=a \Longleftrightarrow x=y$, and  $d_\xi(x,y)=\theta(d_\om(x,y))=\theta(d_\om(y,x))=d_\xi(y,x)$, for all $x,y \in X$.
\\ 
Taking the images by $\theta$ of the two sides of the inequality in (a), for all $x,y,z \in X$,
\begin{equation*}
\begin{array}{lcl}
\theta(d_\om(x,y)) &\leq&  \theta\left( \phi(d_\om(x,z),d_\om(z,y)) \right)
\\
d_\xi(x,y) & \leq &\xi(d_\xi(x,z),d_\xi(z,y)).
\end{array}
\end{equation*}
The continuity of $\theta$ ensures that $d_\xi$ has  its values in $\theta(I_a)$, an interval containing $\theta(a)$. Thus $(X,d_\xi,\theta(a))$ is an O-metric.
\item
(c) With the additional condition $\theta([a,\infty)) \subset [0,a]$ and $\theta(a)=a$, $(X,d_\om,a)$ is an upward O-metric space if and only if $d_\om(x,y) \geq a$ for all $x,y \in X$, if and only if $d_\xi(x,y)=\theta(d_\om(x,y)) \leq a$ for all $x,y \in X$, if and only if $(X,d_\xi,a)$ is a downward O-metric space.
\end{proof}
\noindent
\begin{example}\textbf{(Reverse triangle inequalities).} 
\item
(i) Any metric space $(X,d)$ satisfies (\ref{mirror}) with $a=0$, $I_a=[0,\infty)$, $\om(u,v)=u+v$ and $\phi(u,v)=u-v$. One then obtains $\bar{\phi}(u,v)=|u-v|$ and the reverse triangle $\bar{\phi}$-inequality is the usual reverse triangle inequality in a metric space:
\begin{equation}
|d(x,z)-d(z,y)| \leq d(x,y) ~\forall x,y,z \in X.
\end{equation}
(ii) Any b-metric space $(X,d,s)$ satisfies (\ref{mirror}) with $a=0$, $I_a=[0,\infty)$, $\om(u,v)=s(u+v)$, and $\phi(u,v)=\frac{u}{s}-v$. We then have $\bar{\phi}(u,v)=\left(\frac{1}{2}+\frac{1}{2s}\right)|u-v|-\left(\frac{1}{2}-\frac{1}{2s}\right)(u+v)$, with the reverse triangle $\bar{\phi}$-inequality: 
\begin{equation}
(s+1)|d(x,z)-d(z,y)|-(s-1)[d(x,z)+d(z,y)] \leq 2sd(x,y) ~~\forall x,y,z \in X.
\end{equation}
(iii) If $X$ is endowed with a multiplicative metric $d^*$, then (\ref{mirror}) is satisfied with $\om(u,v)=uv$ and $\phi(u,v)=\frac{u}{v}$, with $a=1$ and $I_a=[1,\infty)$. We have $\bar{\phi}(u,v)=\frac{u^2+v^2}{2uv}+\frac{|u^2-v^2|}{2uv}$ with the following reverse ``triangle multiplicative inequality" for all $x,y,z \in X$:
\begin{equation}
d^*(x,z)^2+d^*(z,y)^2+|d^*(x,z)^2-d^*(z,y)^2| \leq 2d^*(x,z)d^*(z,y)d^*(y,x).
\end{equation}
\end{example}

\noindent
Notice that condition (\ref{mirror}) can be replaced with some symmetric condition as follows:
\begin{proposition}
Suppose $(X,d_\om,a)$ is an O-metric space, with $d_\om:X \times X \to I_a$ and $\om: I_a \times I_a \to I_a$, where $I_a$ be an interval in $[0,\infty)$ containing $a \in \mathbb{R}_+$. Suppose there is a function $\phi: I_a  \times  I_a   \to \mathbb{R}$ such that $\om$ and $\phi$ are increasing in the first variable and $\phi(\om(u,v),v)=\om(\phi(u,v),v)=u$ for all $u,v \in I_a$. Then (a), (b), (c), of Proposition \ref{summary} hold.
\end{proposition}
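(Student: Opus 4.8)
The plan is to show that the stated hypotheses force condition (\ref{mirror}) of Proposition \ref{summary}; once (\ref{mirror}) is in hand, conclusions (a), (b), (c) are literally the corresponding conclusions of Proposition \ref{summary} applied verbatim, so nothing further is needed. Thus the whole proof reduces to one bi-implication.

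To verify (\ref{mirror}), I would fix $v \in I_a$ and regard the two ``first-variable sections'' $u \mapsto \om(u,v)$ and $u \mapsto \phi(u,v)$ as maps $I_a \to I_a$. Note that $\om$ takes values in $I_a$ by hypothesis, and the identity $\om(\phi(u,v),v)=u$ asserted for all $u,v \in I_a$ forces $\phi(u,v) \in I_a$, so the nested expressions occurring below are all legitimate. The assumptions then say precisely that these two sections are non-decreasing and mutually inverse, since $\phi(\om(u,v),v)=u$ and $\om(\phi(u,v),v)=u$ for every $u \in I_a$. Now take $u,v,w \in I_a$. If $w \leq \om(u,v)$, applying the non-decreasing map $\phi(\cdot,v)$ to both sides and using $\phi(\om(u,v),v)=u$ gives $\phi(w,v) \leq u$. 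Conversely, if $\phi(w,v) \leq u$, applying the non-decreasing map $\om(\cdot,v)$ to both sides and using $\om(\phi(w,v),v)=w$ gives $w \leq \om(u,v)$. Hence $w \leq \om(u,v) \Longleftrightarrow \phi(w,v) \leq u$, which is exactly (\ref{mirror}), and Proposition \ref{summary} yields (a), (b), (c).

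As for difficulty, there is essentially no obstacle: the content is simply the remark that the algebraic identities $\phi(\om(u,v),v)=\om(\phi(u,v),v)=u$ mean the sections are inverse bijections of $I_a$, and monotonicity in the first variable alone — with no strictness required, since we never need to invert a monotone map by hand — suffices to push an inequality through either section. The only point deserving a moment's care, and worth spelling out in the write-up, is the well-definedness of the nested expressions $\phi(\om(u,v),v)$ and $\om(\phi(w,v),v)$, i.e. that the inner values lie in $I_a$; this is implicit in the hypotheses as noted above.
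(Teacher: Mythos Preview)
Your proposal is correct and follows exactly the paper's approach: the paper's proof is the single sentence ``Under the conditions of the proposition, the inequality (\ref{mirror}) in Proposition \ref{summary} holds, hence (a), (b), and (c) hold,'' and you have simply supplied the explicit verification of (\ref{mirror}) that the paper leaves to the reader. Your care about well-definedness of the nested expressions is a nice touch that the paper omits.
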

\begin{proof}
Under the conditions of the proposition, the inequality (\ref{mirror}) in Proposition \ref{summary} holds, hence (a), (b), and (c) hold.
\end{proof}

\subsubsection{Finite product of O-metrics}
\noindent
The proposition below states that the Cartesian product of two O-metric spaces is an O-metric space. The proof is omitted. %

\begin{proposition}
Let $(X_1,d_{\om_1},a)$ and $(X_2,d_{\om_2},a)$ be two O-metric spaces, with $d_{\om_1}: X_1 \times X_1 \to I_a$ and $d_{\om_2}:X_2 \times X_2 \to I_a$, where $I_a$ is an interval in $\mathbb{R}_+$ containing $a \geq 0$, and $\om_1: I_a \times I_a \to \mathbb{R}_+$ and $\om_2:I_a \times I_a \to \mathbb{R}_+$ are the respective underlining functions. Let $X_1 \times X_2$ be the Cartesian product of $X_1$ and $X_2$. With $\om=\max\{\om_1,\om_2\}$, define the function $d_\om$ on $X_1 \times X_2$ by:
$$d_\om\left((x,y),(u,v) \right)=\phi(d_{\om_1}(x,u),d_{\om_2}(y,v)), ~\forall (x,y), (u,v) \in X_1 \times X_2,$$
where $\phi: I_a \times I_a \to \mathbb{R}_+$ is such that:
$$
\left\{
\begin{array}{lll}
\phi(u_1,v_1)=a \Leftrightarrow u_1=v_1=a,
\\
\phi \mbox{ is non-decreasing on both variables, }
\\
\phi(\om(u_1,u_2),\om(v_1,v_2)) \leq \om(\phi(u_1,v_1),\phi(u_2,v_2)), ~\forall u_1,u_2,v_1,v_2 \in I_a.
\end{array}
\right.
$$
Then $(X_1 \times X_2,d_\om,a)$ is an O-metric space. 
\end{proposition}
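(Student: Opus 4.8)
The plan is to verify the three axioms of Definition \ref{psimetric} for $d_\om$ on $X_1 \times X_2$ directly, invoking in turn each of the three hypotheses imposed on $\phi$ together with the matching property of $d_{\om_1}$ and $d_{\om_2}$. Throughout I would write $p_i = (x_i, y_i)$ for points of $X_1 \times X_2$, so that $d_\om(p_i, p_j) = \phi\bigl(d_{\om_1}(x_i, x_j), d_{\om_2}(y_i, y_j)\bigr)$.

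Axiom (i) is immediate from the first condition on $\phi$: $d_\om(p_1, p_2) = a$ holds iff $\phi(d_{\om_1}(x_1,x_2), d_{\om_2}(y_1,y_2)) = a$, iff $d_{\om_1}(x_1,x_2) = a$ and $d_{\om_2}(y_1,y_2) = a$, iff $x_1 = x_2$ and $y_1 = y_2$, i.e. $p_1 = p_2$. Axiom (ii) follows at once by feeding the symmetry of $d_{\om_1}$ and of $d_{\om_2}$ into $\phi$.

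The substantive step is the triangle $\om$-inequality, which I would establish in three moves for fixed $p_1, p_2, p_3$. First, the triangle inequalities in the two factors, followed by $\om = \max\{\om_1,\om_2\} \ge \om_i$, give $d_{\om_1}(x_1,x_3) \le \om_1\bigl(d_{\om_1}(x_1,x_2), d_{\om_1}(x_2,x_3)\bigr) \le \om\bigl(d_{\om_1}(x_1,x_2), d_{\om_1}(x_2,x_3)\bigr)$ and likewise $d_{\om_2}(y_1,y_3) \le \om\bigl(d_{\om_2}(y_1,y_2), d_{\om_2}(y_2,y_3)\bigr)$. Second, applying $\phi$ to these two bounds and using that $\phi$ is non-decreasing in both variables yields $d_\om(p_1, p_3) \le \phi\bigl(\om(d_{\om_1}(x_1,x_2), d_{\om_1}(x_2,x_3)),\, \om(d_{\om_2}(y_1,y_2), d_{\om_2}(y_2,y_3))\bigr)$. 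Third, the last hypothesis on $\phi$, applied with $u_1 = d_{\om_1}(x_1,x_2)$, $u_2 = d_{\om_1}(x_2,x_3)$, $v_1 = d_{\om_2}(y_1,y_2)$, $v_2 = d_{\om_2}(y_2,y_3)$, bounds this last quantity by $\om\bigl(\phi(u_1,v_1), \phi(u_2,v_2)\bigr) = \om\bigl(d_\om(p_1,p_2), d_\om(p_2,p_3)\bigr)$. Concatenating the three bounds is exactly the triangle $\om$-inequality.

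The only point needing a little care — the nearest thing to an obstacle in an otherwise routine argument — is checking that $d_\om$ takes values in an interval containing $a$, so that $\om$ may legitimately be applied to them. Here I would note that the first hypothesis forces $\phi(a,a) = a$ (take $u_1 = v_1 = a$), after which monotonicity of $\phi$ keeps $\im(\phi)$ around $a$: on the upward interval $I_a = [a,\infty)$ one gets $\phi \ge a$, on the downward interval $I_a = [0,a]$ one gets $\phi \le a$, and in general it suffices to replace $I_a$ by the smallest interval containing $\im(d_\om) \cup \{a\}$ and restrict $\om$ accordingly, exactly the normalization used after Definition \ref{classif}. With that observed, all three axioms hold and $(X_1 \times X_2, d_\om, a)$ is an O-metric space.
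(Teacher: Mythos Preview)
The paper explicitly omits the proof of this proposition, so there is nothing to compare against; your direct verification of the three axioms is correct and is exactly the natural argument one would expect here. Your closing remark about the range of $d_\om$ is a nice touch, since the statement itself leaves implicit the domain compatibility needed for $\phi(\om(\cdot,\cdot),\om(\cdot,\cdot))$ to make sense.
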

\noindent
It should be noted that if $\om_1$ and $\om_2$ are nondecreasing on both variables, and $d_{\om_1}$ and $d_{\om_2}$ are $a$-upward O-metrics, $\phi$ can be taken to be the maximum function. The proposition above can be generalized to a finite number of O-metric spaces as follows:
\begin{proposition}
Let $\{(X_i,d_{\om_i},a)\}_{1 \leq i \leq n}$ be a family of O-metric spaces, with O-metrics $d_{\om_i}: X_i \times X_i \to I_a$, where $I_a$ is an interval in $\mathbb{R}_+$ containing $a \geq 0$, and $\om_i: I_a \times I_a \to \mathbb{R}_+$ each of their respective underlining functions. The Cartesian product $X_1 \times X_2 \times \cdots \times X_n$ of the $X_i$'s is an O-metric space with the functions $\om=\max\{\om_1,\om_2, \ldots, \om_n\}$ and 
$$d_\om\left((x_1,x_2,\ldots,x_n),(y_1,y_2,\ldots,y_n) \right)=\phi(d_{\om_1}(x_1,y_1),d_{\om_2}(x_2,y_2),\ldots,d_{\om_n}(x_n,y_n)),$$
where $\phi: I_a \times I_a \times \cdots \times I_a \to \mathbb{R}_+$ is such that:
$$
\left\{
\begin{array}{lll}
\phi(u_1,u_2,\ldots,u_n)=a \Longleftrightarrow u_1=u_2=\ldots=u_n=a,
\\\phi \mbox{ is non-decreasing on all variables, }\\
\phi(\om(u_1,v_1),\om(u_2,v_2),\ldots,\om(u_n,v_n)) \leq \om(\phi(u_1,u_2,\ldots,u_n),\phi(v_1,v_2,\dots,v_n)),
\end{array}
\right.
$$
\noindent $\forall u_i,v_i  \in I_a ~ \forall i \in \{1,2,\ldots,n\}$.
\end{proposition}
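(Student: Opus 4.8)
The plan is to verify directly the three axioms of Definition~\ref{psimetric} for $d_\om$ on $Y:=X_1\times\cdots\times X_n$, transporting the axioms of each factor coordinatewise through the ``aggregator'' $\phi$; this is just the $n$-ary analogue of the two-factor proposition above. To fix notation I would write $\mathbf{x}=(x_1,\ldots,x_n)$, $\mathbf{y}=(y_1,\ldots,y_n)$, and set $p_i(\mathbf{x},\mathbf{y}):=d_{\om_i}(x_i,y_i)\in I_a$, so that $d_\om(\mathbf{x},\mathbf{y})=\phi\bigl(p_1(\mathbf{x},\mathbf{y}),\ldots,p_n(\mathbf{x},\mathbf{y})\bigr)$.

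First I would dispose of axioms (i) and (ii). For (i): by the first condition imposed on $\phi$, $d_\om(\mathbf{x},\mathbf{y})=a$ holds exactly when $p_i(\mathbf{x},\mathbf{y})=a$ for every $i$, and since each $(X_i,d_{\om_i},a)$ is an O-metric space this holds exactly when $x_i=y_i$ for every $i$, i.e.\ $\mathbf{x}=\mathbf{y}$. For (ii): symmetry of each $d_{\om_i}$ gives $p_i(\mathbf{x},\mathbf{y})=p_i(\mathbf{y},\mathbf{x})$ for all $i$, hence $d_\om(\mathbf{x},\mathbf{y})=d_\om(\mathbf{y},\mathbf{x})$.

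The substance is the triangle $\om$-inequality, which I would obtain from a three-link chain. Fix $\mathbf{x},\mathbf{y},\mathbf{z}\in Y$ and put $u_i:=p_i(\mathbf{x},\mathbf{y})$, $v_i:=p_i(\mathbf{y},\mathbf{z})$. (1) The triangle $\om_i$-inequality in $X_i$ gives $p_i(\mathbf{x},\mathbf{z})\le\om_i(u_i,v_i)$, and since $\om=\max\{\om_1,\ldots,\om_n\}\ge\om_i$ pointwise, $p_i(\mathbf{x},\mathbf{z})\le\om(u_i,v_i)$ for each $i$. (2) Since $\phi$ is non-decreasing in each variable, applying it to these $n$ inequalities gives
\[
d_\om(\mathbf{x},\mathbf{z})=\phi\bigl(p_1(\mathbf{x},\mathbf{z}),\ldots,p_n(\mathbf{x},\mathbf{z})\bigr)\le\phi\bigl(\om(u_1,v_1),\ldots,\om(u_n,v_n)\bigr).
\]
(3) The third condition on $\phi$, applied to these $u_i,v_i\in I_a$, gives $\phi\bigl(\om(u_1,v_1),\ldots,\om(u_n,v_n)\bigr)\le\om\bigl(\phi(u_1,\ldots,u_n),\phi(v_1,\ldots,v_n)\bigr)=\om\bigl(d_\om(\mathbf{x},\mathbf{y}),d_\om(\mathbf{y},\mathbf{z})\bigr)$. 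Concatenating (1)--(3) is precisely the triangle $\om$-inequality for $d_\om$, so $(Y,d_\om,a)$ is an O-metric space.

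The one point needing care --- the main (and modest) obstacle --- is the bookkeeping on domains and codomains: in links (2)--(3) the values $\om(u_i,v_i)$ must lie in $I_a$ for the composition with $\phi$ to make sense, and $d_\om$ must take values in an interval of $\mathbb{R}_+$ containing $a$ for the conclusion to hold literally. Both are automatic in the upward setting $I_a=[a,\infty)$ --- there $\phi\ge\phi(a,\ldots,a)=a$ by monotonicity, so $d_\om$ is $[a,\infty)$-valued, and $\om$ is defined on $[a,\infty)^2$ --- which is the tacit convention already in force in the binary proposition; in full generality one either restricts each $\om_i$ to land in $I_a$ or enlarges the ambient interval accordingly. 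Apart from this, the argument is nothing more than the coordinatewise transport of the O-metric axioms through the monotone, $\om$-subadditive map $\phi$; it could be recast as an induction on $n$, but the direct verification is cleaner since the aggregator $\phi$ itself varies with $n$.
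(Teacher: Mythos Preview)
Your proof is correct and is exactly the natural direct verification of the three O-metric axioms; the paper in fact omits the proof of this proposition (and of the preceding two-factor version), so there is nothing to compare against beyond noting that your argument is the routine one the authors evidently had in mind. Your remark on the domain/codomain bookkeeping for $\om(u_i,v_i)\in I_a$ and the range of $d_\om$ is a genuine point the statement leaves implicit, and your resolution via the upward convention $I_a=[a,\infty)$ is consistent with how the paper handles such issues elsewhere.
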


\subsection{Topology induced by an O-metric}

\begin{definition}
Let $(X,d_\om,a)$ be an O-metric space. Given $x \in X$ and $r>0$, the open ball centered on $x$ and with radius $r$ is given by \[B(x,r)=\{y \in X: ~ |d_\om(x,y)-a|<r\}.\] 
\end{definition}
\noindent
If $X$ is an $a$-upward O-metric, i.e. if $I_a=[a,\infty)$,  then for $r>0$ and $x \in X$,  $B(x,r)=\{y \in X| ~ a \leq d_\om(x,y)<a+r\}$. This conforms to the definition of open balls in metric spaces and b-metric spaces (when $a=0$), and multiplicative metric spaces (when $a=1$). If $X$ is an $a$-downward O-metric, i.e. if $I_a=[0,a]$, $B(x,r)=\{y \in X| ~ a-r < d_\om(x,y) \leq a\}$ for $r>0$ and $x \in X$.
\\
\\
The following proposition lists and compares some topologies defined on an O-metric:

\begin{proposition}\label{topologiez}
An O-metric $d_\om:X \times X \to I_a$ generates the following topologies on $X$:
\begin{equation}
\mathcal{T}_1=\left\{A \subset X| ~ \forall x \in A ~\exists r \in (\im \om)\setminus \{a\},  ~B(x,|r-a|) \subset A \right\} \label{topologywithim}
\footnote{$\im \om=\left\{\om(u,v):~ u,v \in I_a\right\}$}
\end{equation}
\begin{equation}
\mathcal{T}_2=\left\{A \subset X: ~ \forall x \in A ~\exists r>0,~  B(x,r) \subset A \right\}
 \label{topology}
\end{equation}
\begin{equation}
\mathcal{T}_3=\left\{A \subset X: ~ \forall \{x_n\} \subset X ~ d_\om(x_n,x) \to a \mbox{ and } x \in A \implies \exists n_0 \in \mathbb{N}: x_n \in A ~~ \forall n \geq n_0\right\} \label{topologyseq}
\end{equation}

\noindent
We also have $\mathcal{T}_1 \subset \mathcal{T}_2 =\mathcal{T}_3.$
\end{proposition}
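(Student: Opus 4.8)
The plan is to handle the three families one at a time for the topology axioms and then chase the inclusions, with the equality $\mathcal{T}_2=\mathcal{T}_3$ carrying the only real content. For $\mathcal{T}_2$ I would verify the axioms straight from the definition, stressing that $\mathcal{T}_2$ is simply the family of sets containing an open ball about each of their points; this is a topology for \emph{every} choice of $\om$, with no regularity hypotheses, precisely because the definition does not require the balls to be open (indeed, without monotonicity/continuity of $\om$ the balls need not form a basis). The sets $\emptyset,X$ and arbitrary unions are immediate, and the only point needing a word is finite intersections: given $x\in A_1\cap A_2$ with $B(x,r_1)\subseteq A_1$ and $B(x,r_2)\subseteq A_2$, put $r=\min\{r_1,r_2\}>0$; since $|d_\om(x,y)-a|<r$ forces $|d_\om(x,y)-a|<r_i$ for $i=1,2$, we get $B(x,r)\subseteq B(x,r_1)\cap B(x,r_2)\subseteq A_1\cap A_2$.

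The family $\mathcal{T}_1$ is treated identically, the finite-intersection step being unchanged once one notes that the \emph{smaller} of two admissible radii $|r_1-a|,|r_2-a|$ (with $r_1,r_2\in\im\om\setminus\{a\}$) is itself admissible, being literally one of the two; hence no closure-under-infimum property of $\im\om$ is required. The single thing to record separately is that $X\in\mathcal{T}_1$, which amounts to $\im\om\setminus\{a\}\neq\emptyset$; this we take for granted throughout (it holds, e.g., as soon as $\om(a,a)\neq a$, and in the degenerate complementary situation $d_\om$ is forced to be constant and there is nothing to prove). The inclusion $\mathcal{T}_1\subseteq\mathcal{T}_2$ is then trivial, since for $A\in\mathcal{T}_1$ the witness $r\in\im\om\setminus\{a\}$ gives a genuine positive radius $|r-a|>0$ witnessing $A\in\mathcal{T}_2$. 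For $\mathcal{T}_3$ I would run the standard ``sequentially open'' bookkeeping: for a union, a sequence $d_\om$-converging into $\bigcup_i A_i$ lands in some $A_{i_0}$ and is eventually inside it; for a finite intersection one takes the larger of the two threshold indices; $\emptyset$ and $X$ are clear. Again no hypothesis on $\om$ enters.

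It remains to prove $\mathcal{T}_2=\mathcal{T}_3$. For $\mathcal{T}_2\subseteq\mathcal{T}_3$: if $A\in\mathcal{T}_2$, $x\in A$, $B(x,r)\subseteq A$, and $d_\om(x_n,x)\to a$, then $|d_\om(x_n,x)-a|\to 0$, so for large $n$ we have $x_n\in B(x,r)\subseteq A$, giving the required tail. For the reverse inclusion I would argue by contraposition: if $A\notin\mathcal{T}_2$, fix $x\in A$ with $B(x,r)\not\subseteq A$ for every $r>0$ and choose $x_n\in B(x,1/n)\setminus A$ for each $n\geq 1$; then $|d_\om(x_n,x)-a|<1/n$, so $d_\om(x_n,x)\to a$ while $x\in A$ and no tail of $(x_n)$ lies in $A$, hence $A\notin\mathcal{T}_3$.

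I expect this last construction to be the crux of the argument: it is exactly the step where the countable, metric-like structure (the radii $1/n$) is used, and it is what lets $\mathcal{T}_2$ be recovered purely sequentially — the same phenomenon behind the first-countability and sequential-continuity claims flagged elsewhere in the paper. Everything else is routine set-theoretic verification, the only place asking for a moment's care being the nonemptiness of $\im\om\setminus\{a\}$ that $\mathcal{T}_1\ni X$ relies on.
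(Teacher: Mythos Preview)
Your proposal is correct and follows essentially the same route as the paper: routine verification of the three topology axioms for each family, the trivial inclusion $\mathcal{T}_1\subseteq\mathcal{T}_2$, and the equality $\mathcal{T}_2=\mathcal{T}_3$ via the forward direction through balls and the reverse direction by picking $x_n\in B(x,1/n)\setminus A$. Your observation that $X\in\mathcal{T}_1$ tacitly requires $\im\om\setminus\{a\}\neq\emptyset$ is a point the paper passes over as ``trivial''; flagging and disposing of this degenerate case is a small improvement in care, but the argument is otherwise the same.
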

 \begin{proof}
\item
\textbf{(a)}
$\emptyset \in \mathcal{T}_1$ vacuously while $X \in \mathcal{T}_1$ trivially. Now, let $\{A_i: i \in I\}$ be a family of subsets of $\mathcal{T}_1$. For each $x \in \bigcup_{i \in I} A_i,$ there exists $j \in I$ such that $x \in A_j$ hence there is $r \in \im \om \setminus \{a\}$  satisfying $B(x,|r-a|) \subset A_j \subset \bigcup_{i \in I}A_i$. Thus $\bigcup_{i \in I}A_i \in \mathcal{T}_1.$
\\
Now, given two members $A_1$ and $A_2$ of $\mathcal{T}_1$, $x \in A_1 \cap A_2$ implies that there are radii $r_1,r_2 \in \im \om \setminus \{a\}$ such that $B(x,|r_i-a|)  \subset A_i$, $i \in \{1,2\}$, since $x \in A_1$ and $x \in A_2$. Taking $r_j \in \{r_1,r_2\}$ such that $|r_j-a|=\min\{|r_1-a|,|r_2-a|\}$, $B(x,|r_j-a|)  \subset A_1 \cap A_2$ hence $A_1 \cap A_2 \in \mathcal{T}_1$. We have thus proved that $\mathcal{T}_1$ is a topology on $X$. 
\\
\textbf{(b)} In fact, by substituting $\im \om$ by $\mathbb{R}_+$ and $|r-a|$ with $r$ in \textbf{(a)}, we obtain that $\mathcal{T}_2$ is also a topology on $X$.
\\
\textbf{(c)} 
Consider a family  $\{B_i: i \in I\}$ of subsets of $\mathcal{T}_3$, and a sequence $\{x_n\}_{n \in \mathbb{N}} \subset X$ such that $d_\om(x_n,x) \to a$, where $x \in \bigcup_{i \in I} B_i$. There exists $j \in I$ such that $x \in B_j \in \mathcal{T}_3$ hence there is $n_0 \in \mathbb{N}$ such that $\forall n \geq n_0$, $x_n \in B_j \subset \bigcup_{i \in I} B_i$. Thus $\bigcup_{i \in I} B_i \in \mathcal{T}_3$. In the case that $I=\{1,2\}$ and $\{y_n\}_{n \in \mathbb{N}}$ is a sequence in $X$ such that $d_\om(y_n,y) \to a$ with $y \in B_1 \cap B_2$, then $y \in B_1$ and $y \in B_2$ hence there are integers $n_1, n_2 \in \mathbb{N}$ such that $y_n \in B_1$ $\forall n \geq n_1$ and $y_n \in B_2$ $\forall n \geq n_2$; thus taking $n_0=\max\{n_1,n_2\}$, $y_n \in B_1 \cap B_2$ for all $n \geq n_0$. Thus $B_1 \cap B_2 \in \mathcal{T}_3$. Thus $\mathcal{T}_3$ is a topology on $X$.
\\
The inclusion $\mathcal{T}_1 \subset \mathcal{T}_2$ is obvious.\\
Let $A \in \mathcal{T}_2$. Then for each $x \in A$, there is $r>0$ such that $B(x,r) \subset A$. Let $(x_n)$ be a sequence in $X$ such that $d_\om(x_n,x) \to a$ with $x \in A$. There is $n_0 \in \mathbb{N}$ such that $|d_\om(x_n,x)-a|< r$ for all $n \geq n_0$, hence $x_n \in B(x,r) \subset A$. Thus $A \in \mathcal{T}_3$. Conversely, let $A \in \mathcal{T}_3$. Suppose, for contradiction, that $A \notin \mathcal{T}_2$. Then there is $x \in A$ such that for all $n \in \mathbb{N}$, $B(x,\frac{1}{n}) \not\subset A$. For each $n \in \mathbb{N}$ choose $x_n \in B(x,\frac{1}{n})$ such that $x_n \notin A$. For all $n \in \mathbb{N}$, $|d_\om(x_n,x)-a|< \frac{1}{n}$ hence $d_\om(x_n,x) \to a$. Since $A \in \mathcal{T}_3$, there is $n_0 \in \mathbb{N}$ such that $x_n \in A$ for all $n \geq n_0$, a contradiction. Thus $A \in \mathcal{T}_2$ and therefore $\mathcal{T}_2=\mathcal{T}_3$.
\end{proof}
\noindent
In the remaining part of this paper, the O-metric space $(X,d_\om,a)$ is endowed with the topology $\mathcal{T}=\mathcal{T}_2=\mathcal{T}_3$ in (\ref{topology}) and (\ref{topologyseq}), and is called the topology \textit{induced} by the O-metric $d_\om$, or simply the \textit{O-metric topology}. 

\subsubsection{Convergence and sequential continuity}

We have the following definition:
\begin{definition}
Given an O-metric $d_\om: X \times X \to I_a$ where $I_a$ is an interval of $\mathbb{R}_+$ containing $a \geq 0$, with $\om: I_a \times I_a \to [0,\infty)$. A sequence $\{x_n\}_{n \in \mathbb{N}}$ of points in $X$:
\\
(i)  converges to $x \in X$ if and only if $\lim_{n \to \infty} d_\om(x_n,x)=a$;
\\
(ii) is a Cauchy sequence if $\lim_{n,m \to \infty} d_\om(x_n,x_m)=a$.
\\
The space $(X,d_\om,a)$ is said to be complete if every Cauchy sequence of $X$ converges in $X$. 
\end{definition}
\noindent
From the above definition, the following proposition on sequential continuity immediately follows:
\begin{proposition}\label{sequential}
Let $(X_1,d_{\om_1},a)$ and $(X_2,d_{\om_2},b)$ be two O-metric spaces, and $f:X_1 \to X_2$ a map. The following are equivalent:
\begin{itemize}
\item[(i)] $f$ is sequentially continuous at $\tilde{x} \in X_1$ i.e. for any sequence $\{x_n\}_{n \in \mathbb{N}}$ of points in $X_1$, $x_n  \to \tilde{x} \implies f(x_n) \to f(\tilde{x})$;
\item[(ii)] $\forall \epsilon>0$, $\exists \delta>0:$ $|d_{\om_1}(x,\tilde{x})-a| < \delta \implies |d_{\om_2}(f(x),f(\tilde{x}))-b|<\epsilon$.
\end{itemize}
\end{proposition}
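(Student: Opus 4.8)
The plan is to observe that both conditions (i) and (ii) are statements purely about the real-valued quantities $|d_{\om_1}(x,\tilde x)-a|$ and $|d_{\om_2}(f(x),f(\tilde x))-b|$: by the definition of convergence just given, a sequence in $X_1$ converges to a point $p$ precisely when $|d_{\om_1}(x_n,p)-a|\to 0$ in $\mathbb R$, and similarly in $X_2$ with $b$ in place of $a$. Consequently the proposition is nothing but the classical equivalence between the $\epsilon$--$\delta$ formulation of continuity at a point and its sequential formulation, and no appeal to the O-metric topology $\mathcal T$ is needed.

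For the implication (ii) $\Rightarrow$ (i) I would argue directly. Assume (ii), let $\{x_n\}_{n\in\mathbb N}$ be a sequence in $X_1$ with $x_n\to\tilde x$, and fix $\epsilon>0$. Choose $\delta>0$ as supplied by (ii). Since $|d_{\om_1}(x_n,\tilde x)-a|\to 0$, there is $n_0\in\mathbb N$ with $|d_{\om_1}(x_n,\tilde x)-a|<\delta$ for all $n\ge n_0$; then (ii) yields $|d_{\om_2}(f(x_n),f(\tilde x))-b|<\epsilon$ for all $n\ge n_0$. As $\epsilon>0$ was arbitrary, $d_{\om_2}(f(x_n),f(\tilde x))\to b$, that is $f(x_n)\to f(\tilde x)$, so $f$ is sequentially continuous at $\tilde x$.

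For (i) $\Rightarrow$ (ii) I would use the contrapositive. If (ii) fails, then there is an $\epsilon_0>0$ such that for every $\delta>0$ some point of $X_1$ violates the implication; applying this with $\delta=\tfrac1n$ for each $n\in\mathbb N$ produces a sequence $\{x_n\}_{n\in\mathbb N}$ with $|d_{\om_1}(x_n,\tilde x)-a|<\tfrac1n$ and $|d_{\om_2}(f(x_n),f(\tilde x))-b|\ge\epsilon_0$. The first inequality forces $d_{\om_1}(x_n,\tilde x)\to a$, hence $x_n\to\tilde x$; the second shows $d_{\om_2}(f(x_n),f(\tilde x))\not\to b$, hence $f(x_n)\not\to f(\tilde x)$, contradicting (i).

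The proof is entirely routine; the only point requiring a little care is the extraction step in the contrapositive, where one must negate (ii) in the correct quantifier order (``$\exists\,\epsilon_0>0\ \forall\,\delta>0\ \exists\,x$\dots'') so that a single $\epsilon_0$ serves along the whole sequence, which is precisely what guarantees $f(x_n)\not\to f(\tilde x)$. Beyond this bookkeeping there is no substantive obstacle.
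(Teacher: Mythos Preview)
Your proof is correct and matches the paper's approach: the paper does not give an explicit argument but simply notes that the proposition ``immediately follows'' from the definition of convergence, which is exactly the routine verification you have written out in full.
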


\begin{example}
Consider two O-metric spaces $(X_1,d_{\om_1},a)$ and $(X_2,d_{\om_2},b)$ such that $X_1=X_2=\mathbb{R}$, $a=1$, $b=0$, $\om_1(u,v)=\dfrac{u}{v}$ for all $u \geq 0$ and $v>0$,  $\om_2(u,v)=(u+1)(v+1)$ for all $u,v \geq 0$, $d_{\om_1}(x,y)=e^{-|x-y|}$ and $d_{\om_2}(x,y)=\ln(1+|x-y|)$ for all $x,y \in \mathbb{R}$. The map $f:X_1 \to X_2$ defined by $f(x)=x^2-2$ is sequentially continuous at any $\tilde{x} \in \mathbb{R}$ in that it satisfies (ii) of Proposition \ref{sequential}, with  $\delta=1-e^{|\tilde{x}|-\sqrt{|\tilde{x}|^2+e^\epsilon-1}}$ for any $\epsilon>0$.
\end{example}
\noindent
Next, we specify conditions under which the limit of a sequence, if it exists, is unique.
\begin{proposition}\label{UUU}
Let $(X,d_\om,a)$ be an O-metric space, with $\om:I_a \times I_a \to [0,\infty)$ and $d_\om:X \times X \to I_a$, where $I_a$ is an interval in $\mathbb{R}_+$ containing $a$. If $\{x_n\}_{n \in \mathbb{N}}$ is a sequence of points in $X$ that converges to a point $x \in X$, then $x$ is unique under the conditions $(U_1)$ and $(U_2)$, or $(U_1)$ and $(U_2')$, where:
\begin{itemize}
\item[$(U_1)$]
$\om$ is continuous at $(a,a)$; 
\item[$(U_2)$]
$\om$ is nondecreasing in both variables and either $\om(u,a)=a \Leftrightarrow u=a$ or $\om(a,u)=a \Leftrightarrow u=a$.
\item[$(U_2')$]
$\om$ is nondecreasing in one variable, say $u_i$, and for $u_j=a$ with $j \neq i$, $\om(u_1,u_2)=a \Leftrightarrow u_i=a$.
\end{itemize}
\end{proposition}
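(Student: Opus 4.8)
The plan is to argue by contradiction. Suppose $\{x_n\}_{n\in\mathbb N}$ converges to two points $x,y\in X$, so that $d_\om(x_n,x)\to a$ and $d_\om(x_n,y)\to a$, and suppose $x\neq y$, i.e. $b:=d_\om(x,y)\neq a$. The goal is to reach $b=a$, which is the contradiction. I would first record the elementary fact that, under either $(U_2)$ or $(U_2')$, one has $\om(a,a)=a$: each of these hypotheses contains a clause of the form ``$\om(a,u)=a\Leftrightarrow u=a$'' (or with the arguments swapped, or the one-variable version of $(U_2')$), and specializing the free variable to $a$ yields $\om(a,a)=a$.

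Next, the triangle $\om$-inequality together with the symmetry of $d_\om$ gives, for every $n$,
\[
b=d_\om(x,y)\;\leq\;\om\big(d_\om(x,x_n),\,d_\om(x_n,y)\big).
\]
Since $d_\om(x,x_n)\to a$ and $d_\om(x_n,y)\to a$, condition $(U_1)$ (continuity of $\om$ at $(a,a)$) lets me pass to the limit on the right-hand side and conclude $b\leq\om(a,a)=a$.

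It then remains to derive the reverse inequality $b\geq a$. Here I would apply the triangle $\om$-inequality in the form $d_\om(x_n,y)\leq\om\big(d_\om(x_n,x),\,d_\om(x,y)\big)=\om\big(d_\om(x_n,x),\,b\big)$ in the $(U_2)$-case whose distinguished clause is ``$\om(a,u)=a\Leftrightarrow u=a$'' (otherwise use the symmetric form $d_\om(x_n,y)\leq\om(b,d_\om(x_n,x))$, or the one-variable form dictated by $(U_2')$). Using that $\om$ is non-decreasing in the relevant variable and that $b\leq a$, one bounds the right-hand side by $\om(a,b)$ once the distances $d_\om(x_n,x)$ sit on the correct side of $a$; since $d_\om(x_n,y)\to a$ this gives $a\leq\om(a,b)$, and combined with monotonicity and $\om(a,a)=a$ (which force $\om(a,b)\leq\om(a,a)=a$) one gets $\om(a,b)=a$. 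The clause ``$\om(a,u)=a\Leftrightarrow u=a$'' then yields $b=a$, contradicting $x\neq y$; hence $x=y$ and the limit is unique.

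The step I expect to be the main obstacle is the reverse inequality in the last paragraph. The monotonicity estimate $\om(d_\om(x_n,x),b)\leq\om(a,b)$ is immediate when $d_\om$ is an $a$-downward metric (all distances are $\leq a$), and it is not needed at all when $d_\om$ is $a$-upward (there $b\geq a$ automatically, so $b\leq a$ already closes the argument); in the general case, where distances may approach $a$ from above, one must be a little careful, combining the one-sided monotone limits of $\om$ along $d_\om(x_n,x)\to a$ with the continuity hypothesis $(U_1)$ at $(a,a)$. The distinction between $(U_2)$ and $(U_2')$ only affects which argument of $\om$ is held fixed at $a$ in these estimates, so the sub-cases are handled in the same way \emph{mutatis mutandis}.
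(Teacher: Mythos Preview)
Your proposal is correct and follows essentially the same route as the paper: deduce $\om(a,a)=a$, then $b:=d_\om(x,y)\leq a$ from the triangle $\om$-inequality together with $(U_1)$, and finally sandwich $a\leq\om(u_1,u_2)\leq\om(a,a)=a$ (with $u_i=b$, $u_j=a$) via a second triangle $\om$-inequality and the monotonicity in $(U_2')$, forcing $b=a$ through the biconditional clause. The paper is terser at exactly the step you flag as the obstacle, simply writing ``taking the limits'' in $d_\om(x_n,x)\leq\om(d_\om(x,y),d_\om(x_n,y))$ to obtain $a\leq\om(b,a)$ without separating the upward/downward/general cases as you do.
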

\begin{proof} Assume $(U_1)$ and $(U_2')$ hold (since $(U_2)$ implies $(U_2')$). From condition $(U_2')$, $\om(a,a)=a$. Now, suppose $\lim_{n \to \infty} d_\om(x_n,x)=\lim_{n \to \infty} d_\om(x_n,y)= a$, where $x,y \in X$.  $d_\om(x,y) \leq \om(d_\om(x,x_n),d_\om(x_n,y)) \to \om(a,a)=a$ since $\om$ is continuous; hence $d_\om(x,y) \leq a$. Also,
$d_\om(x_n,x) \leq \om(d_\om(x,y),d_\om(x_n,y))$ hence taking the limits, $a \leq \om(d_\om(x,y),a)$. Similarly, we have that $a \leq \om(a,d_\om(x,y))$. Thus, from condition $(U_2')$, $a \leq \om(u_1,u_2) \leq \om(a,a)=a$, with variable $u_i$ in which $\om$ is monotone nondecreasing such that $u_i=d_\om(x,y)$ and $u_j =a$ for $j \neq i$. $(U_2')$ then also implies that $d(x,y)=a$ so that $x=y$.
\end{proof}
\noindent
\noindent
From Remark \ref{classes}, one can easily verify that metric spaces, b-metric spaces, multiplicative metric spaces, b-multiplicative metric spaces, ultrametric spaces, $\theta$-metric spaces and p-metric spaces satisfy condtions $(U_1)$ and $(U_2)$, hence the limit of a convergent sequence in any of these spaces converge with respect to the O-metric topology. Also, the O-metric space in Example \ref{genl} satisfy conditions $(U_1)$ and $(U_2)$. However, not every O-metric space satisfies conditions $(U_1)$ and $(U_2)$ (or $(U_2')$) as shown in the example below:
\begin{example}\label{olala}
Consider the O-metric space $(X,d_\om,a)$ where $X=[-1,1]$, $~~a=1$, $d_\om(x,y)=\left\{\begin{array}{lll} 1 & \mbox{ if } x=y \\ |xy| & \mbox{ if } x \neq y \end{array}\right.$ and $\om(u,v)=\left\{\begin{array}{lll} \frac{1}{uv} &\mbox{ if } u,v \neq 0, \\ 1 &\mbox{ if } u=0 \mbox{ or } v=0. \end{array}\right.$ Consider the sequence $\{x_n\}_{n \in \mathbb{N}}$ defined by $x_n=1-\frac{1}{n}$, for all $n \in \mathbb{N}$. For $x \in \{-1,1\}$, $d_\om(x_n,x)=\left(1-\frac{1}{n}\right)|x|=1-\frac{1}{n} \to 1$ as $n \to \infty$. Hence $\{x_n\}$ has two limits, $\pm 1$. \end{example}
\noindent
Although the function $\om$ is not unique for any O-metric space  $(X,d_\om,a)$ (given that any other function which dominates $\om$ would qualify), the O-metric space in Example \ref{olala} is such that no other function $\om$ can satisfy conditions $(U_1)$ and $(U_2)$ (or $(U_2')$) since the limit of a convergent sequence needs not be unique. It should also be noted that conditions $(U_1)$ and $(U_2)$ (or $(U_2')$) are only sufficient conditions.

\subsubsection{Openness of balls and Hausdorff property}

It should be noted that authors in \cite{openball} proved that not all open balls are open sets in b-metric spaces, which, as explained earlier, are O-metric spaces. Therefore, further conditions are necessary for every open ball in an O-metric to be an open set.

\begin{theorem}\label{complicated}
Let $(X,d_\om,a)$ be an O-metric space, with $d_\om:X \times X \to I_a$ and $\om: I_a \times I_a \to \mathbb{R}_+$ where $I_a$ is an interval in $\mathbb{R}_+$ containing $a$.
Suppose that:
\begin{equation}\label{complicated1}
\left|
\begin{array}{lll}
\forall r>0 ~~ \forall u \in I_a  ~~|u-a|<r \implies \exists s>0 \mbox{ such that } |w-a|< r \mbox{ whenever } 
\\
w \in I_a \mbox{ with } w \leq \om(u,v)  \mbox{ for some } v \in I_a \mbox{ such that } |v-a|<s. 
\end{array}
\right.
\end{equation}
\\
Then every open ball  in $X$ is an open set.
\end{theorem}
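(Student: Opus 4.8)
The plan is to show directly that an arbitrary open ball $B(x_0,r)$ lies in $\mathcal{T}_2$, i.e. that every point $y \in B(x_0,r)$ has a sub-ball $B(y,s)$ with $B(y,s) \subset B(x_0,r)$. Fix $y \in B(x_0,r)$, so $u := d_\om(x_0,y) \in I_a$ satisfies $|u-a|<r$. Apply hypothesis (\ref{complicated1}) to this $r$ and this $u$ to obtain a radius $s>0$ with the stated property: whenever $w \in I_a$ satisfies $w \leq \om(u,v)$ for some $v \in I_a$ with $|v-a|<s$, then $|w-a|<r$. This $s$ is the radius I will use for the sub-ball at $y$.

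Next I would verify the inclusion. Take any $z \in B(y,s)$, so $v := d_\om(y,z) \in I_a$ with $|v-a|<s$. By the triangle $\om$-inequality (axiom (iii) of Definition \ref{psimetric}) together with symmetry (axiom (ii)), $w := d_\om(x_0,z) \leq \om(d_\om(x_0,y),d_\om(y,z)) = \om(u,v)$, and of course $w \in I_a$. Hence the conclusion of (\ref{complicated1}) applies and gives $|w-a| = |d_\om(x_0,z)-a|<r$, i.e. $z \in B(x_0,r)$. Since $z$ was arbitrary, $B(y,s) \subset B(x_0,r)$, and since $y$ was arbitrary in $B(x_0,r)$, the ball $B(x_0,r)$ belongs to $\mathcal{T}_2$, which is the claim.

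I do not expect any serious obstacle here: condition (\ref{complicated1}) has evidently been engineered precisely to make this argument go through, so the proof is essentially a matter of unwinding the quantifiers in the right order and inserting the triangle $\om$-inequality at the one place it is needed. The only points requiring a moment's care are (a) confirming that $d_\om(x_0,z)$ indeed lands in $I_a$ so that (\ref{complicated1}) is applicable to it (immediate, since $d_\om$ takes values in $I_a$ by definition), and (b) keeping the roles of $u$, $v$, $w$ straight — $u$ is the (fixed) distance from the centre to $y$, $v$ ranges over distances from $y$ to points of its sub-ball, and $w$ is the resulting distance from the centre. One could also remark that when $X$ is $a$-upward and $\om$ is continuous and nondecreasing, condition (\ref{complicated1}) is automatically satisfied, recovering the classical fact that balls are open in metric and multiplicative metric spaces; but that is a corollary rather than part of the proof.
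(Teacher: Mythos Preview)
Your proof is correct and follows essentially the same approach as the paper's: pick a point in the ball, set $u$ equal to its distance from the centre, invoke condition~(\ref{complicated1}) to produce the radius $s$, and then use the triangle $\om$-inequality to show $B(y,s)\subset B(x_0,r)$. The only cosmetic differences are your variable names ($y,z$ in place of the paper's $x,y$) and that you conclude membership in $\mathcal{T}_2$ while the paper writes $\mathcal{T}_3$; these are the same topology by Proposition~\ref{topologiez}.
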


\begin{proof}
Consider the open ball $B(x_0,r)$, where $x_0 \in X$ and $r>0$.
Let $x \in B(x_0,r)$. Then $|d_\om(x,x_0)-a|<r$.  Letting $u=d_\om(x,x_0)$, $u \in I_a$ and $|u-a|<r$  hence there is $s>0$ such that $|w-a|< r$ whenever $w \in I_a$ with $w \leq \om(u,v)$ for some $v \in I_a$ such that $|v-a|<s$. 
\\
Let $y \in B(x,s)$. $|d_\om(x,y)-a|<s$ and from the $\om$-triangle inequality, $d_\om(x_0,y) \leq \om(d_\om(x_0,x),d_\om(x,y))$. Thus, taking $v=d_\om(x,y)$ and $w=d_\om(x_0,y)$, we have that $v,w \in I_a$, $|v-a|<s$ and $w \leq \om(u,v)$, hence $|d_\om(x_0,y)-a| =|w-a|<r$ and $y \in B(x_0,r)$. This shows that there is $s>0$ such that $B(x,s) \subset B(x_0,r)$ and thus, $B(x_0,r) \in \mathcal{T}_3$.
\end{proof}
\noindent
The condition (\ref{complicated1}) in Theorem \ref{complicated} could be explained geometrically. Indeed, if we define an open $a$-centered slice of $I_a$ as $S_r:=I_a \cap (a-r,a+r)$ for some $r>0$, the condition simply means that for any open $a$-centered slice $S_r$ of $I_a$ and an element $u$ in it, there is another open $a$-centered slice $S_s$ of $I_a$ such that $S_r$ fully contains the portion of $I_a$ whose elements are not greater than some element in the image $\om(\{u\} \times S_s)$ of the strip $\{u\} \times S_s$ in the rectangle $S_r \times S_s$. 
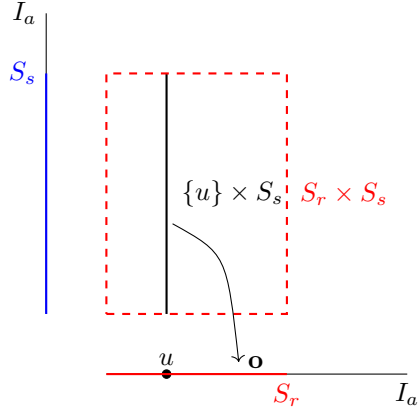
\begin{figure}[h]
\centering
\begin{tikzpicture}[scale=.8]
\draw (5,0) -- (10,0) node[anchor=north] {$I_a$}; 
\filldraw[black] (6,0) circle (2pt) node[anchor=south] {$u$}; 
\draw[thick] (6,1) --node[align=right] {$~~~~~~~~~~~~~~~\{u\} \times S_s$} (6,5) ; 
\draw[red,thick] (5,0) -- (8,0) node[anchor=north] {$S_r$}; 
\draw (4,1) -- (4,6) node[anchor=east] {$I_a$}; 
\draw[blue,thick] (4,1) -- (4,5) node[anchor=east] {$S_s$}; 
\draw[red,thick,dashed] (5,1) -- (8,1) -- node[] {$~~~~~~~~~~~~~S_r \times S_s$} (8,5) -- (5,5) -- (5,1); 
\draw[->] (6.1,2.5) .. controls (7,2) .. (7.2,0.2) node[anchor=west] {$\om$}; 
\end{tikzpicture}
\caption{Illustration of condition (\ref{complicated1}).}
\end{figure}

\noindent
In the case of upward O-metrics, we have the following: 
\begin{corollary}\label{c1c2}
Let $(X,d_\om,a)$ be an upward O-metric space, with $d_\om:X \times X \to I_a$ and $\om:I_a \times I_a \to I_a$, where $I_a=[a,\infty)$. 
If the following conditions hold:
\begin{itemize}
\item[$(C_1)$] There exists $\gamma:[a,\infty) \times [a,\infty) \to \mathbb{R}$ such that
$a \leq u <r \implies \left\{ \begin{array}{lll} \gamma(r,u)>a \\ \om(u,\gamma(r,u))\leq r,\end{array}\right.$
\item[$(C_2)$] $\om$ is increasing in both variables,
\end{itemize}
then every open ball is an open set.
\end{corollary}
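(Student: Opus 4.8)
The plan is to derive the corollary from Theorem~\ref{complicated} by checking that conditions $(C_1)$ and $(C_2)$ force the technical hypothesis (\ref{complicated1}). Since $(X,d_\om,a)$ is upward, $I_a=[a,\infty)$, so for every $t\in I_a$ one has $|t-a|=t-a$; in particular a condition of the form $|t-a|<\rho$ is simply $a\le t<a+\rho$. This reformulation is precisely what lets one plug $(C_1)$ directly into (\ref{complicated1}).

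First I would fix an arbitrary $r>0$ and $u\in I_a$ with $|u-a|<r$, i.e.\ $a\le u<a+r$. Because $u-a<r$, I can choose an auxiliary number $r'$ with $u-a<r'<r$, so that $a\le u<a+r'$. Applying $(C_1)$ to the pair $(a+r',u)$ (legitimate since $a+r'\in[a,\infty)$ and $a\le u<a+r'$) yields a value $\gamma(a+r',u)>a$ with $\om\bigl(u,\gamma(a+r',u)\bigr)\le a+r'$. I then set $s:=\gamma(a+r',u)-a>0$, the radius demanded by (\ref{complicated1}).

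Next I would verify the implication itself: take any $v\in I_a$ with $|v-a|<s$, i.e.\ $a\le v<\gamma(a+r',u)$, and any $w\in I_a$ with $w\le\om(u,v)$. Since $\gamma(a+r',u)\in I_a$, condition $(C_2)$ (monotonicity of $\om$ in the second variable) gives $\om(u,v)\le\om\bigl(u,\gamma(a+r',u)\bigr)\le a+r'$, whence $w\le a+r'<a+r$, that is $|w-a|<r$. This is exactly (\ref{complicated1}), so Theorem~\ref{complicated} applies and every open ball of $X$ is an open set.

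The only delicate point is securing the \emph{strict} inequality $|w-a|<r$ in (\ref{complicated1}): $(C_1)$ only produces $\om(u,\gamma(a+r',u))\le a+r'$, which a priori could leave $w$ sitting exactly at distance $r$ from $a$. Inserting the intermediate radius $r'<r$ (available precisely because $|u-a|<r$ is strict) removes this obstacle; alternatively one may keep $r'=r$ and exploit that $\om$ in $(C_2)$ is strictly increasing, so $v<\gamma(a+r,u)$ already forces $\om(u,v)<\om(u,\gamma(a+r,u))\le a+r$. Beyond this, the argument is just routine bookkeeping of the absolute-value conditions once $I_a=[a,\infty)$ is used to strip them away, so no further machinery is needed.
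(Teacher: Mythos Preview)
Your proposal is correct and follows essentially the same route as the paper: both verify hypothesis (\ref{complicated1}) of Theorem~\ref{complicated} by setting $s=\gamma(a+r,u)-a$ (the paper uses $r$ where you use your intermediate $r'$) and then invoking $(C_2)$ to bound $\om(u,v)$. Your insertion of $r'\in(u-a,r)$ is a minor extra precaution to force the strict inequality $|w-a|<r$; the paper instead reads ``increasing'' in $(C_2)$ as strictly increasing, so that $v<\gamma(a+r,u)$ already gives $\om(u,v)<\om(u,\gamma(a+r,u))\le a+r$, which is exactly the alternative you mention at the end.
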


\begin{proof} We note that given $r>0$ and $u \in [a,\infty)$ such that $|u-a|<r$, we have that $a \leq u <a+r$ hence taking $s=\gamma(a+r,u)-a$, from $(C_1)$, $s>0$. If $v \in [a,\infty)$ and $|v-a|<s$, then $a \leq v < a+\gamma(a+r,u)$; hence, given $(C_2)$, if $w \in [a,\infty)$ and $w \leq \om(u,v)$, $w \leq \om(u,\gamma(a+r,u)) \leq a+r$ so $|w-a|<r$. Thus condition (\ref{complicated1}) is satisfied.
\end{proof}
\begin{remark}
Metric spaces are upward O-metric spaces satisfying conditions $(C_1)$ and $(C_2)$ with $a=0$, $\om$ the addition, and $\gamma(r,u)=r-u$ for all $r,u \geq 0$. Conditions $(C_1)$ and $(C_2)$ hold in multiplicative metric spaces with $a=1$, $\om$ the product, and $\gamma(r,u)=\frac{r}{u}$ for all $r,u \geq 1$. Any $\theta$-metric space (see Definition \ref{action}) such that $\theta$ is a surjective B-action satisfies conditions $(C_1)$ and $(C_2)$, with $a=0$, $\om=\theta$, and $\gamma$ being the B-inverse action of $\theta$ (see \cite{thetamet}). 
\\
No b-metric space $(X,d,s)$ such that the optimal coefficient $s$ of the relaxed triangle inequality is strictly greater than $1$ satisfies condition $(C_1)$. Indeed, with $\om(u,v)=s(u+v)$,  if $u \in \left[\frac{r}{s},r\right)$, then $\om(u,\gamma(r,u)) \leq r \Rightarrow \gamma(r,u) \leq \frac{r}{s}-u \leq 0$.  
It is a conjecture whether in such case, there is always an open ball that is not an open set. It is easy to show that the conjecture is true for b-metrics $d(x,y)=|x-y|^p$ on $\mathbb{R}$, and $d(x,y)=\sum_{i=1}^{\infty} |x_i-y_i|^p$ on $l_p$ spaces, with $p>1$.
\end{remark}

\begin{example}
Consider as in Example \ref{example1}, the downward O-metric space $X=\mathbb{R}$, with $d_{\om}(x,y)=e^{-|x-y|}$, $a=1$, $I_1=(0,1]$, and $\om(u,v)=u/v$. It satisfies the general condition (\ref{complicated1}) of openness of open balls: 
$\forall r>0$ $\forall u \in (0,1]$ such that $|u-1|<r$, there is $s>0$ ($s=r$ if $r \geq 1$ and $s$ any positive number if $r<1$) such that $|w-1|<r$ whenever $w \in (0,1]$ with $w \leq u/v$ for some $v \in (0,1]$ such that $|v-1|<s$.
\end{example}
\noindent

\begin{proposition} Let $(X,d_\om,a)$ be an upward O-metric space, with $d_\om:X \times X \to I_a$ and $\om:I_a \times I_a \to I_a$, where $I_a=[a,\infty)$. Under conditions $(C_1)$ and $(C_2)$, the topology $\mathcal{T}$ is first countable and Hausdorff, and $\mathcal{T}=\mathcal{T}_1=\mathcal{T}_2=\mathcal{T}_3$.
\end{proposition}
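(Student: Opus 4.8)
The plan is to lean entirely on Corollary \ref{c1c2}, which already supplies, under $(C_1)$ and $(C_2)$, that every open ball $B(x,r)$ belongs to $\mathcal{T}=\mathcal{T}_2=\mathcal{T}_3$; everything else is a short argument with the function $\gamma$ furnished by $(C_1)$ together with the strict monotonicity of $\om$ in $(C_2)$. I would first record the trivial auxiliary fact $\om(a,a)=a$: applying $(C_1)$ with $u=a$ and any $r>a$ gives $\gamma(r,a)>a$ and $\om(a,\gamma(r,a))\le r$, whence $a\le\om(a,a)\le\om(a,\gamma(r,a))\le r$ for every $r>a$, so $\om(a,a)=a$.

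For first countability, fix $x\in X$ and consider the family $\{B(x,\frac{1}{n}):n\in\mathbb{N}\}$. Each $B(x,\frac{1}{n})$ is open by Corollary \ref{c1c2} and contains $x$ because $d_\om(x,x)=a$. If $U\in\mathcal{T}$ with $x\in U$, there is $\rho>0$ with $B(x,\rho)\subset U$; choosing $n$ with $\frac{1}{n}\le\rho$ yields $B(x,\frac{1}{n})\subset B(x,\rho)\subset U$, since $|d_\om(x,y)-a|<\frac{1}{n}\le\rho$. Hence $\{B(x,\frac{1}{n})\}_{n}$ is a countable neighbourhood base at $x$.

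For the Hausdorff property, take $x\neq y$ and put $d:=d_\om(x,y)$, which satisfies $d>a$ since $d_\om$ takes values in $[a,\infty)$ and equals $a$ only on the diagonal. Pick any $u_0\in(a,d)$, say $u_0=\frac{a+d}{2}$, and apply $(C_1)$ with $r=d$ and $u=u_0$ to obtain $\gamma(d,u_0)>a$ and $\om(u_0,\gamma(d,u_0))\le d$. Set $\rho_1=u_0-a>0$ and $\rho_2=\gamma(d,u_0)-a>0$. If some $z$ lay in $B(x,\rho_1)\cap B(y,\rho_2)$, then $d_\om(x,z)<u_0$ and $d_\om(z,y)<\gamma(d,u_0)$, so the triangle $\om$-inequality together with the strict monotonicity of $(C_2)$ gives $d\le\om(d_\om(x,z),d_\om(z,y))<\om(u_0,\gamma(d,u_0))\le d$, a contradiction; thus $B(x,\rho_1)$ and $B(y,\rho_2)$ are disjoint open neighbourhoods separating $x$ and $y$.

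Finally, for $\mathcal{T}=\mathcal{T}_1=\mathcal{T}_2=\mathcal{T}_3$ it suffices, by Proposition \ref{topologiez} (which already gives $\mathcal{T}_1\subset\mathcal{T}_2=\mathcal{T}_3=\mathcal{T}$), to show $\mathcal{T}_2\subset\mathcal{T}_1$. Let $A\in\mathcal{T}_2$ and $x\in A$, choose $\rho>0$ with $B(x,\rho)\subset A$, and apply $(C_1)$ with $u=a$ and $r=a+\rho$ to obtain $\gamma(a+\rho,a)>a$ and $r_0:=\om(a,\gamma(a+\rho,a))\le a+\rho$. By strict monotonicity, $r_0=\om(a,\gamma(a+\rho,a))>\om(a,a)=a$, so $r_0\in(\im\om)\setminus\{a\}$ with $|r_0-a|=r_0-a\in(0,\rho]$, hence $B(x,|r_0-a|)\subset B(x,\rho)\subset A$. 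This places $A$ in $\mathcal{T}_1$, completing the chain of equalities. The only mildly delicate points are producing \emph{two} disjoint balls for the Hausdorff step — which is where the freedom of choosing $u_0$ strictly between $a$ and $d$ is used — and observing that $\im\om$ accumulates at $a$ from above so that the $\mathcal{T}_1$-admissible radii $|r-a|$ can be made arbitrarily small; both are handled by feeding the right arguments into $(C_1)$.
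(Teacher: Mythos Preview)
Your proof is correct and follows essentially the same route as the paper's: both invoke Corollary \ref{c1c2} for first countability via the balls $B(x,\tfrac{1}{n})$, separate $x\neq y$ by feeding a point strictly between $a$ and $d_\om(x,y)$ into $(C_1)$ and using strict monotonicity to derive a contradiction, and produce a $\mathcal{T}_1$-admissible radius $r_0=\om(a,\gamma(a+\rho,a))\in(\im\om)\setminus\{a\}$ to get $\mathcal{T}_2\subset\mathcal{T}_1$. Your explicit preliminary verification that $\om(a,a)=a$ is a nice touch (the paper only uses $\om(a,a)\ge a$), and your choice $u_0=\tfrac{a+d}{2}$ simply makes concrete what the paper writes as ``there is $r>0$ such that $d_\om(x,y)>a+r>a$''.
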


\begin{proof} Under $(C_1)$ and $(C_2)$, open balls are open sets hence, for all $x \in X$, the collection $\left\{B\left(x,\dfrac{1}{n}\right): ~n \in \mathbb{N}\right\}$ is a countable local basis of $x$. Hence $\mathcal{T}$ is first countable. 
\\
Let $x,y \in X$ be such that $x \neq y$. Then $d_\om(x,y)>a$ hence there is $r>0$ such that $d_\om(x,y)>a+r>a$. From $(C_1)$, $\gamma(d_\om(x,y),a+r)>a$. 
Let $r_1=\gamma(d_\om(x,y),a+r)-a$. \\
Claim: $B(x,r_{1}) \cap B(y,r)=\emptyset$.\\
Indeed, let $z \in B(x,r_{1}) \cap B(y,r)$. Then $d_\om(x,z)<a+ r_1=\gamma(d_\om(x,y),a+r)$ and $d_\om(y,z)<a+r$ so $d_\om(y,x) \leq \om(d_\om(y,z),d_\om(z,x)) < \om(a+r,\gamma(d_\om(x,y),a+r)) \leq d_\om(x,y)$ from $(C_1)$, a contradiction, whence the claim.
\\
Given Proposition \ref{topologiez}, to prove the equality $\mathcal{T}=\mathcal{T}_i$ for all $i \in \{1,2,3\}$, it suffices to show that $\mathcal{T} \subset \mathcal{T}_1$. 
Let $A \in \mathcal{T}$ and $x \in A$ There exists $r>0$ such that $B(x,r) \subset A$. If $r'=\om(a,\gamma(a+r,a))$, $r' \in (\im \om) \setminus\{a\}$ since $\gamma(a+r,a)>a$ from $(C_2)$ implies that $r'>\om(a,a) \geq a$ as $\om$ is increasing in both variables. From $(C_1)$, $r'=\om(a,\gamma(a+r,a)) \leq a+r$, hence $|r'-a|=r'-a \leq r$. Thus $B(x,|r'-a|) \subset B(x,r) \subset A$, hence $A \in \mathcal{T}_1$.
\end{proof}

\subsubsection{Generating the O-metric topology and relating properties $(C_i)$, $(E_i)$, $(U_i)$, $i=1,2$}
\begin{theorem}\label{surprise}
The topology of an O-metric space $X$ can be generated by an upward O-metric on $X$. 
\end{theorem}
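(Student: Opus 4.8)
\noindent\emph{Proof proposal.} The plan is to ``fold'' the given O-metric about its centre $a$: the resulting non-negative function will be an upward O-metric, and since it encodes exactly the same convergence it will induce exactly the same topology. Given an O-metric space $(X,d_\om,a)$ with $d_\om\colon X\times X\to I_a$, I would define $\rho\colon X\times X\to\mathbb{R}_+$ by $\rho(x,y)=|d_\om(x,y)-a|$. The first two axioms are immediate: $\rho$ inherits symmetry from $d_\om$, and $\rho(x,y)=0$ iff $d_\om(x,y)=a$ iff $x=y$ by Definition \ref{psimetric}. What remains is to produce a function $\om'$ witnessing a triangle $\om'$-inequality for $\rho$, and then to check that the two O-metric topologies coincide.

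For the triangle inequality I would take the smallest function that can possibly work, namely
\[
\om'(u,v)=\sup\bigl\{\rho(x,z):\ x,y,z\in X,\ \rho(x,y)=u,\ \rho(y,z)=v\bigr\},
\]
with the convention $\sup\emptyset=0$, so that $\rho(x,z)\le\om'(\rho(x,y),\rho(y,z))$ holds by construction. The one point that needs an argument — and the place where the full strength of ``$d_\om$ is an O-metric'' is used, rather than merely ``$\rho$ is symmetric and vanishes exactly on the diagonal'' — is that $\om'(u,v)<\infty$ for all $u,v\ge 0$. This follows because $\rho(x,y)=u$ pins $d_\om(x,y)$ to the (at most two-element) set $\{a-u,a+u\}\cap I_a$ and $\rho(y,z)=v$ pins $d_\om(y,z)$ to $\{a-v,a+v\}\cap I_a$; hence the pair $(d_\om(x,y),d_\om(y,z))$ ranges over a set of at most four points of $I_a\times I_a$ depending only on $u,v$, the triangle $\om$-inequality bounds $d_\om(x,z)$ above by the maximum of the corresponding (at most four) values of $\om$, and $0\le d_\om(x,z)$ bounds it below, so $\rho(x,z)=|d_\om(x,z)-a|$ is bounded by a finite quantity depending only on $u,v$. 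Consequently $\om'$ is a genuine map $\mathbb{R}_+\times\mathbb{R}_+\to\mathbb{R}_+$, and $(X,\rho,0)$ is an O-metric space; taking $I_0=[0,\infty)$ as normalised in Definition \ref{classif}, it is $0$-upward. (If one prefers an explicit majorant, one can instead take $\om'(u,v)$ to be the maximum of $a$ and of the at most four numbers $|\om(p_1,p_2)-a|$ with $p_1\in\{a-u,a+u\}\cap I_a$ and $p_2\in\{a-v,a+v\}\cap I_a$.)

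Finally I would match the topologies. By the very definition of $\rho$ we have $\rho(x_n,x)\to 0$ if and only if $d_\om(x_n,x)\to a$, so a sequence converges to a given point in $(X,\rho,0)$ precisely when it converges to that point in $(X,d_\om,a)$. Since the topology $\mathcal{T}_3$ of (\ref{topologyseq}) is defined purely through this convergence relation, the $\mathcal{T}_3$ attached to $\rho$ coincides with the $\mathcal{T}_3$ attached to $d_\om$; and by Proposition \ref{topologiez} every O-metric satisfies $\mathcal{T}=\mathcal{T}_2=\mathcal{T}_3$. Therefore the O-metric topology induced by the upward O-metric $\rho$ equals the topology of $X$, which is the assertion; this is also the justification for subsequently restricting attention to upward O-metric spaces. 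I expect the finiteness of $\om'$ to be the only real obstacle — everything else is bookkeeping once $\rho$ is written down.
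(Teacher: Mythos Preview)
Your proposal is correct and follows essentially the same idea as the paper: fold $d_\om$ about $a$ and observe that convergence (hence $\mathcal{T}_3$, hence $\mathcal{T}$) is preserved. The only cosmetic differences are that the paper keeps the centre at $a$ by setting $d_\xi=a+|d_\om-a|$ rather than shifting to $0$, and it writes down directly the explicit majorant $\xi(u,v)=\max\{\om(u,v),\om(u,2a-v),\om(2a-u,v),\om(2a-u,2a-v),2a\}$, which is exactly your parenthetical alternative rephrased at centre $a$.
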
 
\begin{proof}
Let $(X,d_\om,a)$ be an O-metric space, with $d_\om:X \times X \to I_a$ and $\om:I_a \times I_a \to \mathbb{R}_+$, where $I_a$ is an interval in $\mathbb{R}_+$ containing $a \geq 0$. Define the map $\xi:I_a \times I_a \to \mathbb{R}_+$ by:
\begin{equation*}
\xi(u,v)=\max\left\{\om(u,v),\om(u,2a-v),\om(2a-u,v),\om(2a-u,2a-v),2a\right\} ~~ \forall u,v \in [a,\infty).
\end{equation*}
One can check that $(X,d_\xi,a)$ where $d_\xi$ defined by $d_\xi(x,y)=a+|d_\om(x,y)-a|$ for all $x,y \in X$, is an upward metric space. 
Let $\{x_n\}_{n \in \mathbb{N}}$ be a sequence in $X$. We have that:
\begin{equation*}
\begin{array}{lcl}
d_\om(x_n,x) \to a & \Longleftrightarrow & |d_\om(x_n,x)-a| \to 0 \\ & \Longleftrightarrow& d_\xi(x_n,x) =a+|d_\om(x_n,x)-a| \to a.
\end{array}
\end{equation*}
Thus the topology generated by $d_\om$ coincides with that generated by the upward O-metric $d_\xi$.
\end{proof}
\noindent
This result allows us to focus in the remaining part of the paper, on upward O-metric spaces. 
\begin{proposition}
Given an O-metric space $(X,d_\om,a)$ where $I_a = [a,\infty)$,
\begin{enumerate}
\item If $\om$ satisfies $(E_1)$ and $(E_2)$, then it satisfies $(C_1)$  and $(C_2)$;
\item If $\om$ satisfies $(C_1)$ and $(C_2)$, then it satisfies $(U_1)$, $(U_2)$ (and consequently, $(U_2'))$.
\end{enumerate}
\end{proposition}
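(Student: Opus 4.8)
The plan is to prove the two implications separately, in each case reducing the defining conditions to their bare algebraic content and then checking the target conditions directly.

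\textbf{Part (1): $(E_1),(E_2)\Rightarrow(C_1),(C_2)$.} Let $\lambda:[a,\infty)\to[0,\infty)$ be the monotone function of $(E_1)$; it is a bijection (this is what makes $\lambda^{-1}$ meaningful in Theorem \ref{metricequiv}), and as a monotone bijection between intervals it is moreover continuous. Since $\om$ maps into $I_a=[a,\infty)$, condition $(E_2)$ can be rewritten as $\om(u,v)=\lambda^{-1}(\lambda(u)+\lambda(v))$ for all $u,v\ge a$. From this, $(C_2)$ is immediate: $\lambda$, hence $\lambda^{-1}$, is strictly increasing, and $(u,v)\mapsto\lambda(u)+\lambda(v)$ is strictly increasing in each variable, so $\om$ is too. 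For $(C_1)$ I would set $\gamma(r,u):=\lambda^{-1}(\lambda(r)-\lambda(u))$ whenever $a\le u<r$; then $\lambda(u)<\lambda(r)$ gives $\lambda(r)-\lambda(u)>0$, hence $\gamma(r,u)>\lambda^{-1}(0)=a$, and a one-line computation gives $\om(u,\gamma(r,u))=\lambda^{-1}(\lambda(u)+\lambda(r)-\lambda(u))=\lambda^{-1}(\lambda(r))=r$. Thus $(C_1)$ holds (with equality, in fact).

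\textbf{Part (2): $(C_1),(C_2)\Rightarrow(U_1),(U_2),(U_2')$.} The first step is to record that $\om(a,a)=a$: on the one hand $\om(a,a)\in I_a=[a,\infty)$ gives $\om(a,a)\ge a$; on the other hand, applying $(C_1)$ with the value $a$ in place of $u$ yields, for every $r>a$, a number $\gamma(r,a)\ge a$ with $\om(a,\gamma(r,a))\le r$, and then $(C_2)$ gives $\om(a,a)\le\om(a,\gamma(r,a))\le r$; letting $r\downarrow a$ forces $\om(a,a)\le a$. Condition $(U_2)$ now follows: $\om$ is nondecreasing in both variables by $(C_2)$, and if $u>a$ then strict monotonicity of $\om$ in the first variable gives $\om(u,a)>\om(a,a)=a$, so $\om(u,a)=a\Leftrightarrow u=a$; hence $(U_2')$ as well, exactly as remarked in the proof of Proposition \ref{UUU}. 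Finally, for $(U_1)$, continuity of $\om$ at $(a,a)$: given $\epsilon>0$, pick $r\in(a,a+\epsilon)$ and $u^*\in(a,r)$, and apply $(C_1)$ to obtain $v^*:=\gamma(r,u^*)>a$ with $\om(u^*,v^*)\le r<a+\epsilon$; put $\delta:=\min\{u^*-a,\,v^*-a\}>0$. If $a\le u<a+\delta$ and $a\le v<a+\delta$ then $u\le u^*$ and $v\le v^*$, so by $(C_2)$ together with $\om(a,a)=a$ we get $a\le\om(u,v)\le\om(u^*,v^*)<a+\epsilon$, i.e. $|\om(u,v)-\om(a,a)|<\epsilon$.

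\textbf{Main obstacle.} Everything becomes routine monotonicity bookkeeping once the representation $\om(u,v)=\lambda^{-1}(\lambda(u)+\lambda(v))$ is available, so the one genuinely delicate point is the legitimacy of that formula and of the choice $\gamma(r,u)=\lambda^{-1}(\lambda(r)-\lambda(u))$: this requires $\lambda$ to be a bijection of $[a,\infty)$ \emph{onto} all of $[0,\infty)$, so that $\lambda^{-1}$ is defined at the arguments $\lambda(r)-\lambda(u)$ for arbitrary $a\le u<r$. I will take this surjectivity as part of the content of $(E_1)$, since it is already tacitly used through $\lambda^{-1}$ in Theorem \ref{metricequiv}. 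In Part (2) the only step requiring a little care is the extraction of a single pair $(u^*,v^*)$ with $u^*,v^*>a$ and $\om(u^*,v^*)$ small, on which the $\epsilon$--$\delta$ argument rests; after that, monotonicity does the rest.
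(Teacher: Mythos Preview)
Your proof is correct and follows essentially the same approach as the paper's own proof: in Part~(1) you both invert $(E_2)$ to the representation $\om(u,v)=\lambda^{-1}(\lambda(u)+\lambda(v))$ and set $\gamma(r,u)=\lambda^{-1}(\lambda(r)-\lambda(u))$, and in Part~(2) you both first pin down $\om(a,a)=a$ from $(C_1)$--$(C_2)$ and then run an $\epsilon$--$\delta$ argument for continuity via a pair $(u^*,v^*)$ with $\om(u^*,v^*)\le a+\epsilon$ (the paper makes the concrete choice $u^*=a+\epsilon/2$, $v^*=\gamma(a+\epsilon,a+\epsilon/2)$, while you leave $u^*\in(a,r)$ generic, but the mechanism is identical). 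Your explicit flagging of the surjectivity of $\lambda$ needed to make $\lambda^{-1}(\lambda(r)-\lambda(u))$ well-defined is a point the paper uses tacitly.
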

\begin{proof}
\item
1) Suppose there is a strictly increasing function $\lambda:[a,\infty) \to [0,\infty)$ with $\lambda(a)=0$, such that $\lambda(\om(u,v))=\lambda(u)+\lambda(v)$ for all $u,v \in [a,\infty)$. Then $\om(u,v)=\lambda^{-1}(\lambda(u)+\lambda(v))$ for all $u,v \in [a,\infty)$ so $\om$ is strictly increasing on both variables as composition of two strictly increasing functions. Thus condition $(C_2)$ is satisfied. Define $\gamma:[a,\infty) \times [a,\infty) \to \mathbb{R}$ as $\gamma(u,v)=\lambda^{-1}(\lambda(u)-\lambda(v))$. Suppose that $a \leq u <r$. Then $\gamma(r,u)=\lambda^{-1}(\lambda(r)-\lambda(u)) >\lambda^{-1}(0)=a$ and $\om(u,\gamma(r,u))=\lambda^{-1}(\lambda(u)+\lambda(\gamma(r,u)))=\lambda^{-1}(\lambda(u)+\lambda(r)-\lambda(u))=r$. Thus $(C_1)$ is also satisfied. 
\item
2) Now suppose $\om$ is strictly increasing in both variables and that $\gamma:[a,\infty) \times [a,\infty) \to \mathbb{R}$ is such that $\gamma(r,u)>a$ and $\om(u,\gamma(r,u)) \leq r$ whenever $a \leq u <r$. Given $\epsilon>0$, for all $u \in [a,a+\epsilon)$, from $(C_1)$, $\gamma(a+\epsilon,u) >a$ and $\om(u,\gamma(a+\epsilon,u)) \leq a +\epsilon$, and since $\om$ is strictly increasing in both variables, $\om(a,a)< \om(u,\gamma(a+\epsilon,u)) \leq a+\epsilon$. Thus $a \leq \om(a,a) < a+\epsilon$ for all $\epsilon>0$, hence $\om(a,a)=a$. Hence, $(U_2)$ is satisfied. In fact, $(U_2')$ is also satisfied since $\om(u,a)=a=\om(a,a)$ implies $u=a$, and $\om(a,v)=a=\om(a,a)$ implies $v=a$. For $\epsilon>0$, if $\delta=\min\left\{\frac{\epsilon}{2}, \gamma(a+\epsilon,a+\frac{\epsilon}{2})-a\right\}$, $\delta>0$ since from $(C_1)$, $\gamma(a+\epsilon,a+\frac{\epsilon}{2})>a$ and $\om\left(a+\frac{\epsilon}{2},\gamma(a+\epsilon,a+\frac{\epsilon}{2})\right) \leq a+\epsilon$; for all $u,v \in [a,\infty)$, $|u-a|<\delta$ and $|v-a|<\delta$ imply that $\om(u,v) <\om(a+\delta,a+\delta) \leq \om\left(a+\frac{\epsilon}{2},\gamma(a+\epsilon,a+\frac{\epsilon}{2})\right) \leq a+\epsilon$ hence $|\om(u,v)-a|<\epsilon$. Thus $\om$ is continuous at $(a,a)$ and $(U_1)$ is satisfied.
\end{proof}
\noindent
Multiplicative metric spaces satisfy $(E_1)$ and $(E_2)$ (see proof of Corollary \ref{multiplicativem}), while b-metric spaces satisfy $(U_1)$, $(U_2)$ and $(U_2')$ but not $(C_1)$. In the case where a function $\om$ associated to an O-metric space $(X,d_\om,a)$, where $I_a = [a,\infty)$,  satisfy $(E_1)$ and $(E_2)$, completeness is directly related to the completeness of the associated metric $\lambda \circ d_\om$:
\begin{proposition}
Let $(X,d_\om,a)$ be an O-metric space where $I_a = [a,\infty)$, with $\om:[a,\infty) \times [a,\infty) \to [a,\infty)$, where $a \in \mathbb{R}_+$. Suppose that there is $\lambda:[a,\infty) \to [0,\infty)$ satisfying conditions $(E_1)$ and $(E_2)$ of Theorem \ref{metricequiv}. Then $(X,d_\om,a)$ is complete if and only if the metric space $(X,\lambda \circ d_\om)$ is complete.
\end{proposition}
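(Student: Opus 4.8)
The plan is to pass everything through the single metric $d := \lambda \circ d_\om$, which is genuinely a metric on $X$ by Theorem \ref{metricequiv} (here $I_a=[a,\infty)$, so $(X,d_\om,a)$ is an upward O-metric space, and $(E_1)$, $(E_2)$ hold), and to show that $d_\om$ and $d$ have exactly the same Cauchy sequences and the same convergent sequences with the same limits. Once that is in place, the equivalence of completeness is immediate: if $(X,d)$ is complete and $\{x_n\}$ is $d_\om$-Cauchy, then it is $d$-Cauchy, hence $d$-converges to some $x\in X$, hence $d_\om$-converges to $x$; the reverse implication is symmetric.

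The one analytic ingredient needed is a transfer lemma: for any sequence $\{t_n\}\subset[a,\infty)$ one has $t_n\to a$ if and only if $\lambda(t_n)\to 0$. The direction ``$\lambda(t_n)\to 0 \Rightarrow t_n\to a$'' uses only that $\lambda$ is strictly increasing with $\lambda(a)=0$: if $t_n\not\to a$, some subsequence satisfies $t_{n_k}\geq a+\epsilon$ for a fixed $\epsilon>0$, whence $\lambda(t_{n_k})\geq\lambda(a+\epsilon)>\lambda(a)=0$, contradicting $\lambda(t_n)\to 0$. For ``$t_n\to a \Rightarrow \lambda(t_n)\to 0$'' I would invoke the continuity of $\lambda$ at $a$: under $(E_1)$, in the setting of Theorem \ref{metricequiv} where $\lambda$ is a strictly increasing bijection of the intervals $[a,\infty)$ and $[0,\infty)$, $\lambda$ is a homeomorphism, so $t_n\to a$ gives $\lambda(t_n)\to\lambda(a)=0$.

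With the transfer lemma available I would apply it twice. Taking $t_n=d_\om(x_n,x_m)$ as $n,m\to\infty$ shows that $\lim_{n,m}d_\om(x_n,x_m)=a$ iff $\lim_{n,m}d(x_n,x_m)=0$, i.e. $\{x_n\}$ is $d_\om$-Cauchy iff it is $d$-Cauchy; taking $t_n=d_\om(x_n,x)$ shows that $d_\om(x_n,x)\to a$ iff $d(x_n,x)\to 0$, i.e. $\{x_n\}$ converges to $x$ in $(X,d_\om,a)$ iff it converges to $x$ in $(X,d)$. Combining these with the reduction of the first paragraph proves both directions of the proposition.

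I expect the transfer lemma — specifically the implication $t_n\to a\Rightarrow\lambda(t_n)\to 0$ — to be the only real obstacle: this is precisely where the structure imposed by $(E_1)$ and $(E_2)$ (via Theorem \ref{metricequiv}) is used, since a merely monotone $\lambda$ with a jump at $a$ would make the $d_\om$-topology and the $d$-topology disagree. Everything else is a routine unwinding of the definitions of Cauchy sequence and convergence in an O-metric space.
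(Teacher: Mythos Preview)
The paper states this proposition without proof, so your overall plan --- matching the Cauchy sequences and convergent sequences of $d_\om$ and $d=\lambda\circ d_\om$ via the transfer lemma $t_n\to a\Leftrightarrow\lambda(t_n)\to 0$ --- is the natural one and presumably what the authors intend.

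The gap is exactly where you suspected. You justify $t_n\to a\Rightarrow\lambda(t_n)\to 0$ by asserting that $\lambda$ is a strictly increasing bijection of $[a,\infty)$ onto $[0,\infty)$, hence a homeomorphism. But neither $(E_1)$, $(E_2)$, nor Theorem~\ref{metricequiv} gives surjectivity onto $[0,\infty)$: the proof of Theorem~\ref{metricequiv} only uses that $\lambda:[a,\infty)\to\operatorname{Im}(\lambda)$ is bijective. Without continuity of $\lambda$ at $a$ the conclusion can actually fail. Take $a=0$, $X=(0,1]$, $d_\om(x,y)=|x-y|$, $\lambda(0)=0$, $\lambda(t)=1+t$ for $t>0$, and $\om(u,v)=1+u+v$ for $u,v>0$ with $\om(u,0)=\om(0,u)=u$. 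Then $(E_1)$ and $(E_2)$ hold, $(X,d_\om,0)$ is an $\om$-metric space that is not complete (the sequence $x_n=1/n$ is $d_\om$-Cauchy with no limit in $X$), yet $(X,\lambda\circ d_\om)$ is complete because $\lambda\circ d_\om(x,y)\ge 1$ whenever $x\ne y$, forcing every Cauchy sequence to be eventually constant. Your argument becomes correct once continuity of $\lambda$ at $a$ is added --- and this does hold in all the paper's concrete examples --- but it does not follow from $(E_1)$ and $(E_2)$ as stated.
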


\section{A fixed point theorem on O-metric spaces}

In this section, we provide a sneak peek at the usefulness of the setting of O-metric spaces by providing a general framework for fixed point theorems of contractive-like mappings in O-metric spaces. The following lemma affirms the equivalence between triangle $\om$-inequalities in O-metrics and polygon inequalities only distorted by a function $\Delta_n$ of the number of the sides of the polygon.\footnote{See next footnote.}

\begin{lem}\label{state}
Let $X$ be a non-empty set $X$, and $d: X\times X\longrightarrow I_a$ is a function, where $I_a = [a, \infty),$ with  $a \in \mathbb{R}_+$.  Then $(X,d,a)$ is an $\om$-metric space,  for some $\om: I_a \times I_a \to \mathbb{R}_+$, if and only if $d$ satisfies the following:
\begin{itemize}
\item[$(i)$]  $\forall x_1,x_2 \in X$,  $d(x_1,x_2)=a \Longleftrightarrow x_1=x_2;$
\item[$(ii)$] $\forall x_1,x_2 \in X$, $d(x_1,x_2)= d(x_2,x_1);$
\item[$(iii)$] For each $n\in \mathbb{N}$, there exists a function $\Delta_n : I_a^n \longrightarrow \mathbb{R}_+$, non-decreasing in all variables, such that for all $x_0,x_1, \ldots, x_n \in X$,
\begin{equation}\label{olal}
d(x_0, x_n) \leq \Delta_n(d(x_0, x_1), d(x_1, x_2), \dots, d(x_{n-1}, x_n)). \footnote{(\ref{olal}) can be referred to as a polygon inequality only distorted by a function $\Delta_n$ of the number of the sides of the polygon.}
\end{equation}
\end{itemize}
\end{lem}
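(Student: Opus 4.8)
The statement is an equivalence, and the ``if'' direction is almost immediate. Assume $d$ satisfies $(i)$--$(iii)$; I would simply set $\om:=\Delta_2$. Then $\om:I_a\times I_a\to\mathbb{R}_+$, and the three axioms of Definition~\ref{psimetric} hold verbatim: $(i)$ and $(ii)$ are the hypotheses bearing the same labels, while $(iii)$ with $n=2$ reads $d(x_0,x_2)\le\Delta_2(d(x_0,x_1),d(x_1,x_2))=\om(d(x_0,x_1),d(x_1,x_2))$, which is exactly the triangle $\om$-inequality. Hence $(X,d,a)$ is an $\om$-metric space.

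For the ``only if'' direction, suppose $(X,d,a)$ is an $\om$-metric space for some $\om:I_a\times I_a\to\mathbb{R}_+$. Conditions $(i)$ and $(ii)$ of the lemma are literally the first two O-metric axioms, so the entire content is to produce the functions $\Delta_n$ of condition $(iii)$. My plan is to iterate a \emph{monotone} surrogate of $\om$. Put
\[
\widehat{\om}(u,v):=\sup\bigl\{\,\om(u',v')\ :\ a\le u'\le u,\ a\le v'\le v\,\bigr\},\qquad u,v\in I_a ,
\]
so that $\widehat{\om}\ge\om$ pointwise and $\widehat{\om}$ is non-decreasing in each variable. Then define $\Delta_1:=\mathrm{id}_{I_a}$ and, recursively,
\[
\Delta_{n+1}(t_1,\dots,t_{n+1}):=\widehat{\om}\bigl(\Delta_n(t_1,\dots,t_n),\,t_{n+1}\bigr).
\]
When $\om$ is already non-decreasing in both variables --- which is the case in essentially all of the examples of Remark~\ref{classes} --- one may dispense with $\widehat{\om}$ and take directly $\Delta_n(t_1,\dots,t_n)=\om(t_1,\om(t_2,\dots,\om(t_{n-1},t_n)))$.

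Condition $(iii)$ then follows from two short inductions on $n$. Monotonicity: $\widehat{\om}$ is non-decreasing in each slot because enlarging $u$ or $v$ enlarges the set over which the supremum is taken, hence every $\Delta_n$ is non-decreasing in all its variables as a composition of non-decreasing maps. Polygon inequality: for $n=1$ it is the triviality $d(x_0,x_1)\le d(x_0,x_1)$; assuming it for $n$, for any $x_0,\dots,x_{n+1}\in X$ the triangle $\om$-inequality gives $d(x_0,x_{n+1})\le\om(d(x_0,x_n),d(x_n,x_{n+1}))\le\widehat{\om}(d(x_0,x_n),d(x_n,x_{n+1}))$, and then the induction hypothesis $d(x_0,x_n)\le\Delta_n(d(x_0,x_1),\dots,d(x_{n-1},x_n))$ together with monotonicity of $\widehat{\om}$ in its first argument yields $d(x_0,x_{n+1})\le\widehat{\om}\bigl(\Delta_n(d(x_0,x_1),\dots,d(x_{n-1},x_n)),d(x_n,x_{n+1})\bigr)=\Delta_{n+1}(d(x_0,x_1),\dots,d(x_n,x_{n+1}))$.

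The one point that genuinely requires care --- and the step I expect to be the main obstacle --- is checking that $\widehat{\om}$, and hence every $\Delta_n$, is \emph{real-valued}, i.e.\ that the defining supremum is finite for all $u,v\in I_a$. This is precisely where the assumption that $(X,d,a)$ is an O-metric space (equivalently, that $\om$ is $\mathbb{R}_+$-valued) must be exploited, and one cannot be cavalier: since $\om$ need not be monotone, the bound $d(x,z)\le\om(d(x,y),d(y,z))$ does not by itself tame $\widehat{\om}(u,v)$. The natural route is to replace $\widehat{\om}$ by the a priori smaller quantity $\sup\{d(x,z):x,y,z\in X,\ d(x,y)\le u,\ d(y,z)\le v\}$ and to establish its finiteness from the O-metric structure; I expect this --- rather than the two inductions above --- to be where the real work (and possibly a mild extra hypothesis on $\om$, such as monotonicity) is needed. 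Once finiteness is secured, the inductions complete the proof.
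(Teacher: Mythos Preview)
Your argument follows the paper's line exactly. For the ``if'' direction you set $\om=\Delta_2$, which is precisely what the paper does. For the ``only if'' direction the paper's proof is a single sentence: it says $\Delta_n$ is ``obtained by successive applications of the triangle $\om$-inequality on the terms $d(x_i,x_j)$, $0\le i<j\le n$, until each of $d(x_0,x_1),\dots,d(x_{n-1},x_n)$ appears exactly once''---in other words, the nested expression $\om(\om(\cdots\om(t_1,t_2),t_3),\dots,t_n)$, which is your recursion with $\om$ in place of $\widehat{\om}$.

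Your version is in fact more careful than the paper's. The paper's iterated substitution tacitly uses that $\om$ is non-decreasing in each variable: without that, one cannot replace $d(x_0,x_{n})$ by an upper bound \emph{inside} $\om$, nor conclude that the resulting $\Delta_n$ is non-decreasing in all variables as the lemma demands. Your monotone envelope $\widehat{\om}$ is exactly the device that repairs this, and the finiteness worry you raise is genuine: for a completely arbitrary $\om:I_a^2\to\mathbb{R}_+$ the supremum defining $\widehat{\om}$ need not be finite. The paper does not address this point at all; it simply proceeds as if $\om$ were monotone, which is the ``mild extra hypothesis'' you anticipated and which holds in every concrete instance considered in the paper (cf.\ Remark~\ref{classes}). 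So you have not missed an argument that the paper supplies---the gap you flag is present in the paper's proof as well, and your suggested fix (assume $\om$ non-decreasing, or at least locally bounded so that $\widehat{\om}$ is finite) is the natural one.
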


\begin{proof}
If $X$ is an O-metric space, $(i)-(iii)$ hold, with $\Delta_n$ obtained by successive applications of the triangle $\om$-inequality on the terms $d(x_{i},x_{j})$, $0 \leq i<j \leq n$, until each of $d(x_0,x_1)$, $d(x_1,x_2)$, $\ldots$, $d(x_{n-1},x_n)$ appear exactly once in the right hand side of the last inequality.  Conversely, if $(i)-(iii)$ hold, then  $X$ is an $\om$-metric space, with $\om=\Delta_2$.
\end{proof}
\noindent 
Note that if $a=0$ and $\Delta_n(t_1,t_2,\ldots,t_n)=\sum_{i=1}^nt_i$, then (\ref{olal}) reduces to the triangle inequality in metric spaces.
We then have the following theorem which extends the Banach contraction principle in O-metric spaces:

\begin{theorem} \label{app}
Let $(X,d,a)$ be a complete O-metric space, with  $d:X \times X \to I_a,$ where $a \in \mathbb{R}_+$, and $\left\{\Delta_n\right\}_{n \in \mathbb{N}}$ satisfying  conditions $(i)-(iii)$ of Lemma \ref{state}. 
Suppose $T: X \to X$ is a map such that $\forall x, y \in X$, $d(Tx, Ty) \leq \psi (d(x, y))$, where $\psi: I_a \rightarrow I_a$ is a non-decreasing function, continuous at $a$, with $\psi(a) = a$ and:
\begin{itemize}
\item[$(iv)$] $\forall t \in I_a$, $\lim_{n \to \infty} \psi^n(t) =a$;
\item[$(v)$] $\forall i\in \mathbb{N}$, $\lim_{t \to a} \Delta_i \left(\psi(t), \psi^2(t), \ldots, \psi^i(t)\right) = a$.
\end{itemize}
If either $T$ is continuous or $\Delta_2$ is continuous at (a, a) with $\Delta_2(a, a) = a$, then, $T$ has a unique fixed point. 
\end{theorem}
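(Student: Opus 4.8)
The plan is to follow the classical Banach–Picard scheme, showing the orbit $\{T^n x_0\}$ is Cauchy, extracting a limit by completeness, and identifying it as a fixed point using either continuity hypothesis. Fix an arbitrary $x_0 \in X$ and set $x_n = T^n x_0$. The first step is to control consecutive distances: by the contractive hypothesis and monotonicity/iteration, $d(x_n, x_{n+1}) = d(T x_{n-1}, T x_n) \leq \psi(d(x_{n-1}, x_n)) \leq \cdots \leq \psi^n(d(x_0, x_1))$. Set $t_0 := d(x_0, x_1) \in I_a$. By hypothesis $(iv)$, $\psi^n(t_0) \to a$, so consecutive distances tend to $a$.

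The second and central step is the Cauchy estimate. Given $m > n$, apply the polygon inequality $(iii)$ of Lemma \ref{state} with the $m-n$ points $x_n, x_{n+1}, \ldots, x_m$:
\begin{equation*}
d(x_n, x_m) \leq \Delta_{m-n}\big(d(x_n, x_{n+1}), d(x_{n+1}, x_{n+2}), \ldots, d(x_{m-1}, x_m)\big).
\end{equation*}
Using monotonicity of $\Delta_{m-n}$ in every argument together with $d(x_{n+k}, x_{n+k+1}) \leq \psi^{n+k}(t_0) \leq \psi^k(\psi^n(t_0))$ — here I use that $\psi$ non-decreasing gives $\psi^{n+k}(t_0) \leq \psi^k(s)$ whenever $\psi^n(t_0) \leq s$, and I will take $s$ close to $a$ for $n$ large — one bounds $d(x_n, x_m)$ by $\Delta_{m-n}\big(\psi(s), \psi^2(s), \ldots, \psi^{m-n}(s)\big)$ for a suitable $s = s_n \to a$. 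Here is where I would need to be slightly careful: $(v)$ gives, for each \emph{fixed} index $i$, that $\Delta_i(\psi(t), \ldots, \psi^i(t)) \to a$ as $t \to a$, but in the Cauchy estimate the index $m-n$ grows with $n$. The resolution is the standard one for such "$\Delta_n$-distorted" settings: the family $\{\Delta_i\}$ should be organized so that $\Delta_{j}$ refines $\Delta_i$ for $j > i$ (which is exactly how the $\Delta_n$ in Lemma \ref{state} are built, by iterating $\om = \Delta_2$), so that for $m-n$ large the tail contributions $\psi^{i}(s)$ with $i$ large are themselves near $a$ and absorbed; more precisely I expect the intended reading is that for each fixed $n$ and all $m>n$ the bound collapses to an expression controlled by $\lim_{t\to a}\Delta_i(\psi(t),\ldots,\psi^i(t))$ uniformly enough to force $d(x_n,x_m)\to a$. \textbf{This uniformity issue is the main obstacle} and the place where the precise structural assumptions on $\{\Delta_n\}$ (beyond mere monotonicity) must be invoked; I would phrase the argument so that for any $\varepsilon>0$ one first picks $i$ with $d(x_n,x_m) \le \Delta_2(d(x_n, x_{n+i}), \ldots)$-type nesting reducing the effective arity, then uses $(v)$ for that single $i$ together with continuity of $\psi$ at $a$.

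Granting the Cauchy property, completeness of $(X,d,a)$ yields $x^* \in X$ with $d(x_n, x^*) \to a$, i.e. $x_n \to x^*$ in the O-metric topology. The third step is to show $T x^* = x^*$, splitting on the hypothesis: if $T$ is continuous, then $T x_n \to T x^*$, but $T x_n = x_{n+1} \to x^*$, and uniqueness of limits — which holds here because the topology is first countable and, under $\Delta_2 = \om$ continuous at $(a,a)$ with $\om(a,a)=a$, Proposition \ref{UUU} applies (when instead $T$ is continuous one still needs a uniqueness-of-limits guarantee, so I would note that continuity of $T$ combined with the contractive inequality gives $d(x^*, Tx^*) \le \om(d(x^*,x_{n+1}), d(x_{n+1}, Tx^*))$ and pass to the limit directly rather than relying on topological Hausdorffness) — forces $Tx^* = x^*$. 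If instead $\Delta_2$ is continuous at $(a,a)$ with $\Delta_2(a,a)=a$, then from the triangle $\om$-inequality $d(x^*, Tx^*) \le \Delta_2\big(d(x^*, x_{n+1}), d(x_{n+1}, Tx^*)\big) \le \Delta_2\big(d(x^*, x_{n+1}), \psi(d(x_n, x^*))\big)$; letting $n\to\infty$, using $d(x_n,x^*)\to a$, continuity of $\psi$ at $a$ with $\psi(a)=a$, and continuity of $\Delta_2$ at $(a,a)$, gives $d(x^*, Tx^*) \le \Delta_2(a,a) = a$, whence $d(x^*, Tx^*) = a$ and $Tx^* = x^*$ by axiom (i).

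Finally, for uniqueness, suppose $Tx^* = x^*$ and $Ty^* = y^*$. Then $d(x^*, y^*) = d(Tx^*, Ty^*) \le \psi(d(x^*, y^*))$, and iterating, $d(x^*, y^*) \le \psi^n(d(x^*, y^*)) \to a$ by $(iv)$, so $d(x^*, y^*) = a$ and $x^* = y^*$. This last step is routine; the genuine work is entirely in the Cauchy estimate and in correctly exploiting condition $(v)$ despite the growing arity of $\Delta_{m-n}$.
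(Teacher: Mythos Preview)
Your approach mirrors the paper's exactly: build the Picard orbit, bound consecutive distances by $\psi^n(d(x_0,x_1))$, apply the polygon inequality $(iii)$ with $t_n=\psi^{n-1}(d(x_0,x_1))$, invoke completeness, split on the continuity hypothesis, and finish uniqueness via $\psi$. The fixed-point step (both cases) and the uniqueness step are essentially identical in the two write-ups, with only cosmetic differences (the paper asserts $\psi(t)<t$ for $t>a$ and argues in one line; your iteration via $(iv)$ is equivalent and arguably cleaner).

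The substantive point is the Cauchy step, which you correctly flag as ``the main obstacle.'' The paper does \emph{not} resolve the uniformity issue you raise. It bounds $d(x_n,x_{n+i})\le \Delta_i(\psi(t_n),\ldots,\psi^i(t_n))$, observes $t_n\to a$ by $(iv)$, applies $(v)$ to conclude $\lim_{n\to\infty} d(x_n,x_{n+i})=a$ \emph{for each fixed} $i\ge 1$, and then simply writes ``Therefore, $\{x_n\}$ is a Cauchy sequence.'' No argument is given to pass from convergence at each fixed gap $i$ to the Cauchy condition $\lim_{n,m\to\infty} d(x_n,x_m)=a$, and as you implicitly recognize, the former does not imply the latter in general (e.g.\ $x_n=\sqrt{n}$ in $\mathbb{R}$). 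So the workaround you sketch---nesting via $\Delta_2$ to reduce the effective arity before invoking $(v)$---goes beyond what the paper actually does; the paper's proof shares the gap you identified rather than closing it. Your instinct that additional structure on $\{\Delta_n\}$ or a uniform version of $(v)$ is needed here is sound.
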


\begin{proof}
Let $x_{0} \in X$ and $\{x_n\}_{n\in\mathbb{N}}$  a sequence such that $x_{n+1} = T{x_n}$ $\forall n\geq 0$. Then, 
\begin{equation*}
\begin{array}{lcl}
d(x_n,x_{n+1}) = d(Tx_{n-1},Tx_n) & \leq & \psi(d(x_{n-1},x_n))
\\
&\leq& \psi^2(d(x_{n-2},x_{n-1}))
\\
&\vdots&
\\
&\leq& \psi^n(d(x_0,x_1)), ~~~~\mbox{ $\forall n\geq 0.$ }
\end{array}
\end{equation*}

\noindent
For all $n, i \in \mathbb{N}$,
\begin{equation*}
\begin{array}{lcl}
d(x_n,x_{n+i})& \leq &\Delta_i (d(x_n, x_{n+1}), d(x_{n+1}, x_{n+2}), \ldots, d(x_{n+i-1}, x_{n+i}))
\\
&\leq & \Delta_i (\psi(t_n), \psi^2(t_n), \ldots, \psi^i(t_n))
\\

& where & t_n =\psi^{n-1} (d(x_{0}, x_{1})).
\end{array}
\end{equation*}
From $(iv)$, $\lim_{n \to \infty}\psi^{n-1}(d(x_0, x_1)) = a,$ hence, from $(v)$, as $t_n \to a,$ 

\begin{equation*}
\lim_{n \to \infty} \Delta_i (\psi(t_n), \psi^2(t_n), \ldots, \psi^i(t_n)) =a. 
\end{equation*}
Thus, $\lim_{n \to \infty} d(x_n, x_{n+i}) = a $ $\forall i \geq 1.$
Therefore, $\{x_n\}$ is a Cauchy sequence and thus converges to some $x^{*} \in X$ (since X is complete).\\
\noindent
\textit{Case 1:} 
If $T$ is continuous, then $x^* = \displaystyle \lim_{n \to \infty}x_{n+1} = \displaystyle \lim_{n \to \infty} Tx_n = Tx^*$ is a fixed point of $T$. 
\\
\textit{Case 2:} Suppose $\Delta_2$ is continuous at $(a, a)$ and that $\Delta_2(a, a) = a$.
\begin{equation*}
\begin{array}{lcl}
d(Tx^*, x^*)& \leq &\Delta_2 (d(Tx^*, Tx_n), d(Tx_n, Tx^*))
\\
& \leq & \Delta_2 (\psi(d(x^*, x_n)), d(x_{n+1},x^*))
\end{array}
\end{equation*}
As $n \to \infty$, $d(Tx^*, x^*) \leq \Delta_2(a,a)=a$, hence $x^* = Tx^*$ is a fixed point of  $T$.
\\
From (v) and the monotonicity of $\psi$, we have that $\psi(t)<t$ for $t>a$. If $z$ is another fixed point of $T$, $d(x^*,z)=d(Tx^*,Tu) \leq \psi(d(x^*,z))$ hence $d(x^*,z)=a$ and $x^*=z$. Therefore, the fixed point of $T$ is unique.
\end{proof}
\noindent
The Banach contraction principle in the setting of metric spaces and b-metric spaces are obtained as corollaries as follows:

\begin{corollary}\label{BCP}\textup{\cite{Banach1922}}
Let (X, d) be a complete metric space, and $T: X \to X$ a map such that $d(Tx, Ty) \leq kd(x, y), \forall x, y \in X,$ for some $k \in [0, 1)$. Then, T has a unique fixed point.
\end{corollary}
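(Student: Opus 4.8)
The plan is to derive Corollary~\ref{BCP} as a direct specialization of Theorem~\ref{app}. First I would fix the data $a=0$, $I_a=[0,\infty)$, and take $d$ to be the given metric, which is an $\om$-metric with $\om(u,v)=u+v$; this satisfies conditions $(i)$--$(iii)$ of Lemma~\ref{state} with the natural choice $\Delta_n(t_1,\dots,t_n)=\sum_{i=1}^n t_i$, which is non-decreasing in every variable. Since $(X,d)$ is a complete metric space, $(X,d,0)$ is a complete O-metric space in the sense required by the theorem.

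Next I would set $\psi(t)=kt$ for $t\in[0,\infty)$. This is non-decreasing, continuous everywhere (in particular at $a=0$), and $\psi(0)=0=a$. The contraction hypothesis $d(Tx,Ty)\le k\,d(x,y)$ is exactly $d(Tx,Ty)\le\psi(d(x,y))$. It remains to verify $(iv)$ and $(v)$: since $k\in[0,1)$, $\psi^n(t)=k^n t\to 0=a$ for every $t\ge 0$, giving $(iv)$; and for each $i\in\mathbb{N}$,
\[
\Delta_i(\psi(t),\psi^2(t),\dots,\psi^i(t))=\sum_{j=1}^i k^j t = \frac{k(1-k^i)}{1-k}\,t \xrightarrow[t\to 0]{} 0 = a,
\]
which is $(v)$. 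Finally, $\Delta_2(u,v)=u+v$ is continuous at $(0,0)$ with $\Delta_2(0,0)=0$, so the second of the two alternative hypotheses of Theorem~\ref{app} holds (regardless of whether $T$ is continuous). Theorem~\ref{app} then yields a unique fixed point of $T$, which is precisely the assertion of Corollary~\ref{BCP}.

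There is essentially no obstacle here: the entire content is a bookkeeping check that the metric setting instantiates the O-metric hypotheses, together with the elementary geometric-series computation above. The only point requiring the slightest care is confirming that the particular $\Delta_n$ chosen is indeed the one produced by iterating the triangle $\om$-inequality as in the proof of Lemma~\ref{state} — but for $\om(u,v)=u+v$ the iterated inequality is just the ordinary polygon (triangle) inequality $d(x_0,x_n)\le\sum_{j=1}^n d(x_{j-1},x_j)$, so $\Delta_n(t_1,\dots,t_n)=\sum_j t_j$ is exactly right and no further justification is needed.
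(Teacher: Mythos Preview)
Your argument is correct and matches the paper's own proof essentially line for line: the paper also takes $a=0$, $\Delta_n(t_1,\dots,t_n)=\sum_i t_i$, $\psi(t)=kt$, verifies $(iv)$ and $(v)$ via the same geometric-series computation, and invokes Theorem~\ref{app} (noting, as you do, that $\Delta_2$ is continuous at $(0,0)$ with $\Delta_2(0,0)=0$). Your closing worry about whether $\Delta_n$ must be the one produced by iterating the triangle $\om$-inequality is unnecessary---Lemma~\ref{state} only requires \emph{some} non-decreasing $\Delta_n$ satisfying the polygon inequality, and your choice plainly works.
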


\begin{proof}
$(i)-(iii)$ of Lemma \ref{state} hold with $a=0$ and 
$\Delta_n : I_0 \longrightarrow \mathbb{R}_+$ defined by $\Delta_n(\alpha_1, \alpha_2, \ldots, \alpha_n) = \displaystyle\sum^n_{i=1}\alpha_i$, $\forall (\alpha_1, \alpha_2, \ldots, \alpha_n) \in I_0^n.$  \\
If $\psi: T_0 \longrightarrow I_0$ is defined by $\psi(t) = kt$, $\forall t \geq 0$, then $ (iv)$ and $(v)$ of Theorem \ref{app} hold. Indeed,
 $\forall t \geq 0,\displaystyle \lim_{n \to \infty} \psi^n(t) = \displaystyle \lim_{n \to \infty} k^nt = 0$ and, for all $i \in \mathbb{N}$,
\begin{equation*}
\begin{array}{lcl}
\displaystyle \lim_{t \to 0} \Delta_i (\psi(t), \psi^2(t), \ldots, \psi^i(t)) & = & \displaystyle \lim_{t \to 0}(kt+k^2t+\ldots +k^it)\\
& =&\displaystyle \lim_{t \to 0} k(1+k^2+\ldots + k^{i-1})t\\
& =&\displaystyle \lim_{t \to 0} {\frac{k-k^{i+1}}{1-k}} t\\
& =& 0.
\end{array}
\end{equation*}
$d(Tx, Ty) \leq \psi(d(x, y)), \forall x, y \in X,$ and $T$ is continuous (in fact, $\Delta_2$ being the addition operation is continuous at $(0, 0)$, with $\Delta_2(0, 0) =0$). Thus, from Theorem \ref {app}, $T$ has a fixed point.
\end{proof}

\begin{corollary}\textup{\cite{suzuki}}
Let (X, d) be a complete b-metric space, with $s\geq 1,$ and $T: X \longrightarrow X$ a mapping such that for some $k \in (0, 1)$ and for all $x, y \in X,$ $d(Tx, Ty) \leq kd(x, y)$. Then $T$ has a unique fixed point.
\end{corollary}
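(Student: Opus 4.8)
The plan is to obtain this corollary as a special case of Theorem \ref{app}, in the same spirit as Corollary \ref{BCP}. A b-metric space $(X,d,s)$ is precisely a $0$-upward O-metric space with $a=0$, $I_0=[0,\infty)$ and $\om(u,v)=s(u+v)$ (Remark \ref{classes}), and conditions $(i)$--$(ii)$ of Lemma \ref{state} are just the axioms $(d_1)$--$(d_2)$ of a b-metric. First I would exhibit the family $\{\Delta_n\}_{n\in\mathbb{N}}$: iterating the relaxed triangle inequality $d(x_0,x_n)\le s\,d(x_0,x_1)+s\,d(x_1,x_n)$ gives
\begin{equation*}
d(x_0,x_n)\le s\,d(x_0,x_1)+s^2 d(x_1,x_2)+\cdots+s^{n-1}d(x_{n-2},x_{n-1})+s^{n-1}d(x_{n-1},x_n),
\end{equation*}
so one sets $\Delta_n(\alpha_1,\dots,\alpha_n)=\sum_{j=1}^{n-1}s^j\alpha_j+s^{n-1}\alpha_n$, which is non-decreasing in each argument and satisfies $(iii)$ of Lemma \ref{state}. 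In particular $\Delta_2(\alpha_1,\alpha_2)=s(\alpha_1+\alpha_2)$ is continuous at $(0,0)$ with $\Delta_2(0,0)=0$.

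Next I would take $\psi:[0,\infty)\to[0,\infty)$ defined by $\psi(t)=kt$; it is non-decreasing, continuous at $0$, and $\psi(0)=0$, and the contraction hypothesis reads $d(Tx,Ty)\le kd(x,y)=\psi(d(x,y))$. Condition $(iv)$ holds since $\psi^n(t)=k^n t\to 0$ for every $t$ because $k\in(0,1)$. For $(v)$, a direct computation gives, for each $i\in\mathbb{N}$,
\begin{equation*}
\Delta_i\!\left(\psi(t),\psi^2(t),\dots,\psi^i(t)\right)=t\left(\sum_{j=1}^{i-1}(sk)^j+s^{i-1}k^i\right),
\end{equation*}
a fixed scalar multiple of $t$, which tends to $0$ as $t\to0$. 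Since $\Delta_2$ is continuous at $(0,0)$ with $\Delta_2(0,0)=0$ (equivalently, $T$, being a b-metric contraction, is sequentially continuous), Theorem \ref{app} applies and yields a unique fixed point of $T$. Completeness is not an issue: a sequence is Cauchy, resp. convergent, in the O-metric sense exactly when it is Cauchy, resp. convergent, in the b-metric $(X,d,s)$.

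The whole derivation is routine; the only steps requiring a moment of care are writing $\Delta_n$ in closed form and checking its monotonicity, together with the observation that $(v)$ only demands the limit at $t=0$, where the polynomial-in-$t$ expression above vanishes for trivial reasons. I do not anticipate a genuine obstacle, since the constants $s\ge 1$ and $k\in(0,1)$ are fixed.
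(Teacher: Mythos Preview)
Your proof is correct and follows the same overall scheme as the paper's: cast the b-metric as a $0$-upward O-metric, take $\psi(t)=kt$, exhibit a family $\{\Delta_n\}$ satisfying Lemma~\ref{state}(iii), and invoke Theorem~\ref{app}. The difference lies entirely in the choice of $\Delta_n$. You use the elementary telescoped bound $\Delta_n(\alpha_1,\dots,\alpha_n)=\sum_{j=1}^{n-1}s^j\alpha_j+s^{n-1}\alpha_n$, whose coefficients grow like $s^n$; the paper instead imports Suzuki's sharper polygon estimate $\Delta_n(\alpha_1,\dots,\alpha_n)=s^{f(n)}\sum_{j=1}^n\alpha_j$ with $f(n)=-[-\log_2 n]$, and then passes to an even more elaborate $\Delta_n'$ (depending on an integer $l$ with $sk^{2^l}<1$) to verify~(v), referring to the proof of Lemma~6 in \cite{suzuki}. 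Since condition~(v) only asks that $\Delta_i(\psi(t),\dots,\psi^i(t))\to 0$ as $t\to 0$ for each \emph{fixed} $i$, and your expression is a fixed scalar multiple of $t$, your cruder bound is entirely adequate and the argument is noticeably shorter. The paper's route, with its logarithmic exponent $f(n)$ and the refined $\Delta_n'$, would be relevant if one needed control that is uniform in $i$ (e.g.\ to address the Cauchy property directly rather than through Theorem~\ref{app}), but for deriving the corollary from Theorem~\ref{app} as stated, your choice suffices.
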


\begin{proof}
From Lemma 5 in \cite{suzuki}, $d(x_0, x_n) \leq s^{f(n)} \displaystyle \sum_{j=0}^{n-1} d(x_j, x_{j+1})$, $\forall n \in \mathbb{N}$, \\$\forall (x_0, x_1,\ldots, x_n) \in X^{n+1}$, where $f(n) = -[-\log_2n]$ (here, [t] denotes the maximum integer not exceeding a real number $t$). Thus, $(i)-(iii)$ in Lemma \ref{state} hold with $a=0$ and $\Delta_n: I_0^n \longrightarrow \mathbb{R}_+ $ such that $\Delta_n(\alpha_1, \alpha_2, \ldots, \alpha_n) = s^{f(n)} \displaystyle \sum_{j=1}^n \alpha_j.$ \\
It is easy to see that $(iv)$ of Theorem \ref{app} holds, taking $\psi: I_0 \longrightarrow I_0$ such that $\psi(t) = kt$, $\forall t \geq 0$.\\
In fact, if we define $\Delta_n^{\prime}: I_0^n \longrightarrow \mathbb{R}_+$ by 
$$\Delta_n^{\prime} (\alpha_1, \alpha_2, \ldots, \alpha_n) = 
\left\{
\begin{array}{lcl}
s^l \displaystyle \sum_{i=1}^n\alpha_i & \mbox{ if } & n \leq 2^l \\
\displaystyle \sum_{i=0}^{\mu-1}\left(s^{i+l+1} \displaystyle \sum_{j=1+i.2^{l}}^{(i+1)2^l}\alpha_j\right) +s^{\mu} \displaystyle \sum_{i=\mu.2^l+1}^{n}\alpha^i &  \mbox{ if } & n > 2^l,
\end{array}
\right.$$
where $\mu = \left[\frac{n}{2^l}\right],$ with $l \in \mathbb{N}$ chosen such that $ sk^{2^{l}} < 1,$
$(i)-(iii)$ in Lemma \ref{state} and $(iv)$ and $(v)$ in Theorem \ref{app} still hold, with $a = 0$, and $\Delta_n^{\prime}$ substituting $\Delta_n$ (see proof of Lemma 6 in \cite {suzuki}).\\
$\forall x, y \in X$, $d(Tx, Ty) \leq \psi(d(x, y))$,  and $T$ is continuous, $\Delta_2^{\prime}(\alpha_1, \alpha_2) =s(\alpha_1 + \alpha_2)$ is continuous at $(0, 0)$, and $\Delta_2^{\prime}(0, 0) = 0.$ Thus, from Theorem \ref{app}, $T$ has a fixed point.  
\end{proof}


\section{Polygon inequalities and Applications}

\subsection{A more precise triangle inequality} 

The concept of triangle $\om$-inequality in O-metric spaces can be used to provide sharp inequalities for distances in metric spaces. We begin by recalling the following definition:

\begin{definition}\textup{\cite{lines}}
Let $(X,d)$ be a metric space, with three distinct points $a,b,c$. The point $b$ is said to lie between $a$ and $c$ if $d(a,c)=d(a,b)+d(b,c)$, and one writes $[abc]$ when this holds.
\\
The line generated by two points $a,b \in X$ in a metric space, denoted $\overline{ab}$, is defined to be the set consisting  of $a,b$ and all other points $c$ satisfying $[acb]$, $[abc]$ or $[bac]$.
\end{definition}
\noindent
We can now state the following definitions:
\begin{definition}
A metric space $(X,d)$  is said to admit a metric betweenness (or a non-trivial line) if there exists a distinct triple $x,y,z \in X$ such that $d(x,z)=d(x,y)+d(y,z)$. In other words, $X$ is said to admit a non-trivial line if there is a line in $X$ with at least three points. 
\end{definition}

\begin{definition}
A sequence $\{x_n\}$ of distinct points of a metric space $(X,d)$ such that $d(x_1,x_{n+1})=\sum_{i=1}^n d(x_i,x_{i+1})$ for any $n \in \mathbb{N}$, is said to be a metric betweenness sequence. In such case, the points $x_n$ all belong to a line, hence $X$ has a line that is at least countably infinite. 
\end{definition} 

\begin{example}
The Euclidean metric $d(x,y)=\|x-y\|$ in $\mathbb{R}^\textup{N}$ admits a metric betweenness sequence (hence, a metric betweenness). 
\end{example}

\begin{example}
Given a set $X$ with at least three distinct elements, the discrete metric does not admit any metric betweenness.
\end{example}
\noindent
On the other hand, a set $X$ with at least three distinct elements can also be endowed with a metric for which there is a metric betweenness sequence. This is shown in the following lemma:

\begin{lem}
Given any set $X$ with at least three distinct elements, there is a metric $d$ on $X$ that admits a metric betweenness. In fact, if $X$ is infinite, there is some metric $d$ that admits a metric betweenness sequence. 
\end{lem}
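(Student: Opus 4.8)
The plan is to exhibit the required metrics explicitly. For the first assertion, I would fix three distinct points $a,b,c \in X$ and set $d(x,x)=0$ for all $x$, $d(a,c)=d(c,a)=2$, and $d(x,y)=1$ for every other pair of distinct points. The key observation is that \emph{any} symmetric function $X \times X \to \{0,1,2\}$ vanishing exactly on the diagonal is automatically a metric: if the triangle inequality $d(x,z)\le d(x,y)+d(y,z)$ failed, then $d(x,y)+d(y,z)\le 1$, so one of the two summands is $0$ — i.e.\ $x=y$ or $y=z$ — and then $d(x,z)$ equals the remaining summand, a contradiction. Hence the $d$ above is a metric, and $d(a,c)=2=1+1=d(a,b)+d(b,c)$ gives $[abc]$, so $(X,d)$ admits a metric betweenness. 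This construction needs only $|X|\ge 3$, so it also settles the finite case.

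For the second assertion, assume $X$ is infinite, choose a countably infinite set of distinct points $x_1,x_2,x_3,\dots$ in $X$, and put $Z = X \setminus \{x_n : n \ge 1\}$. I would define $d(x,x)=0$ and, for distinct points, $d(x_i,x_j)=|i-j|$, $d(x_i,z)=d(z,x_i)=i$ for $z \in Z$, and $d(z,z')=1$ for distinct $z,z' \in Z$; equivalently, with $g(x_i)=i$ and $g(z)=0$, one has $d(x,y)=|g(x)-g(y)|$ except that distinct points of the zero-fibre $Z$ are declared to be at distance $1$. Symmetry and positivity off the diagonal are clear. For the triangle inequality, after disposing of the trivial case where two of the points coincide, I would split on how many of the three distinct points lie in $Z$: if at most one does, the inequality is just the one-dimensional triangle inequality for $|g(\cdot)-g(\cdot)|$ on $[0,\infty)$; if two or three lie in $Z$, every positive distance occurring is $\ge 1$ while the largest distance appearing is still bounded by the sum of the other two (the extreme case being three points of $Z$, where $1 \le 1+1$). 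Finally,
\[
d(x_1,x_{n+1}) = |1-(n+1)| = n = \sum_{i=1}^{n} 1 = \sum_{i=1}^{n} d(x_i,x_{i+1}) \qquad \forall n \in \mathbb{N},
\]
so $\{x_n\}_{n\ge 1}$ is a metric betweenness sequence in $(X,d)$.

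There is no essential obstacle here; the most laborious step is the case analysis for the triangle inequality in the second construction. The only point requiring a little care is the choice of the perturbation value on $Z$: it must be positive (so that $d$ separates points) and at most $2$ (so that it never exceeds $2i$ for $x_i \notin Z$ nor the sum of two $Z$-distances), and $1$ is a convenient choice meeting both requirements.
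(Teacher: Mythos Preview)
Your proof is correct and proceeds differently from the paper in both parts. The paper enumerates a countable $X$ as $\{x_1,x_2,\ldots\}$ and sets $d(x_i,x_j)=|i-j|$, which simultaneously yields the betweenness $[x_1x_2x_3]$ and, when $X$ is countably infinite, the betweenness sequence; for uncountable $X$ it pulls back the usual metric on $\mathbb{R}$ along a bijection $\varphi:X\to\mathbb{R}$. Your first construction---perturbing the discrete metric by declaring a single distance to be $2$, together with the clean observation that any symmetric $\{0,1,2\}$-valued function vanishing exactly on the diagonal is automatically a metric---is more elementary and cardinality-agnostic. Your second construction is in fact more general than the paper's: by working with a chosen countable subset and patching in a discrete-type metric on the complement $Z$, you cover every infinite $X$, whereas the paper's uncountable step tacitly assumes $|X|=|\mathbb{R}|$. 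The price you pay is the short case analysis for the triangle inequality, which the paper avoids by transporting a known metric; your analysis is correct (in the two-points-in-$Z$ case the distances are $\{1,i,i\}$ with $i\ge 1$, so each required inequality is immediate).
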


\begin{proof}
Suppose $X=\{x_1,x_2,x_3,\ldots\}$ is countable (possibly finite), with $x_1$, $x_2$, $x_3$ all distinct.  The function $d(x_i,x_j)=|i-j|$ is a metric on $X$ and admits a metric betweenness $[x_1 x_2 x_3]$. In the case where $X$ is infinite, the sequence $\{x_n\}$ is a metric betweenness sequence. Suppose now that $X$ is uncountable, with a bijective map $\varphi:X \to \mathbb{R}$. The function $d(x,y)=|\varphi(x)-\varphi(y)|$ is a metric on $X$ and the sequence $\{\varphi^{-1}(n)\}$ is a metric betweenness sequence.
\end{proof}
\noindent
Now, let $(X, \tilde{d})$ be a metric space and $\{x_n\}$ a sequence with no metric betweenness. For each $i \in \mathbb{N}$, the following strict inequality holds 
\begin{equation}
 \tilde{d}(x_1,x_{n+1}) <\sum_{i=1}^n  \tilde{d}(x_i,x_{i+1}).%
\end{equation}
\noindent
In this case, it is interesting to find a more precise triangle inequality. \\
Observe that the metric space $(X, \tilde{d})$ is also an $\tilde{\om}$-metric space, with 
\begin{equation}
\tilde{\om}(u,v)=\sup\{ \tilde{d}(a,c): ~  \tilde{d}(a,b)=u, ~ \tilde{d}(b,c)=v\},
\end{equation} 
since the triangle $\tilde{\om}$-inequality $ \tilde{d}(x,z) \leq \tilde{\om}( \tilde{d}(x,y), \tilde{d}(y,z)) ~ \forall x,y,z \in X$, holds. In fact, $\bar{\om}(u,v) \leq u+v$ for all $u,v \geq 0$. 
\\
\\
It may not be convenient to work with $\tilde{\om}$. We consider the case where $\tilde{d}$ is a composition of some function and a metric.
 Suppose that there is another metric $d$ on $X$ such that $ \tilde{d}=\theta \circ d$, where $\theta:[0,\infty) \to [0,\infty)$ is an increasing  sub-additive map. By simple verification, $(X, \tilde{d})$ is an $\om$-metric space with 
\begin{equation}\label{casecomp}
\om(u,v)= \theta(\theta^{-1}(u)+\theta^{-1}(v)) ~~\forall u,v \geq 0.
\end{equation}
One can check that $\tilde{\om}(u,v) \leq \om(u,v) \leq u+v$ for all $u,v \in [0,\infty)$, with equalities if there is a metric betweenness $[xyz]$ (either for $d$ or $ \tilde{d}$) such that $u$ is the distance between $x$ and $y$, and $v$ the distance between $y$ and $z$. 
\\
A straightforward computation of the triangle $\om$-inequality using (\ref{casecomp}) gives that:
\begin{equation}\label{bettereq}
 \tilde{d}(x_1,x_{n+1}) \leq \theta\left(\sum_{i=1}^n \theta^{-1}(\tilde{d}(x_i,x_{i+1}))\right).%
\end{equation}
With the aid of (\ref{bettereq}), the maximum distance between the end points of a sequence of consecutively equidistant points can be better approximated. Indeed, suppose that for each $i \in \mathbb{N}$, $ \tilde{d}(x_i,x_{i+1})=\alpha$ for some $\alpha >0$. By triangle inequality,
\begin{equation}\label{least}
 \tilde{d}(x_1,x_{n+1}) \leq n\alpha ~~\forall n \in \mathbb{N}.
\end{equation}
However, from (\ref{bettereq}),  
\begin{equation}\label{better}
 \tilde{d}(x_1,x_{n+1}) \leq \theta(n\theta^{-1}(\alpha)) ~~ \forall n \in \mathbb{N}.
\end{equation}
From the properties of $\theta$, (\ref{better}) is a better estimation of $ \tilde{d}(x_1,x_{n+1})$ than (\ref{least}), since, $\theta(n\theta^{-1}(\alpha)) \leq n\alpha$ $~\forall n \in \mathbb{N}$.

\begin{example}
In $\mathbb{R}^\textup{N}$, consider the metric $ \tilde{d}(x,y)=\ln(1+\|x-y\|)$ for all $x,y \in \mathbb{R}^N$, where $\| \cdot\|$ is the Euclidean norm. One can easily check that $ \tilde{d}$ does not admit any metric betweenness sequence, although $\|\cdot\|$ does. Suppose that $\{x_n\}$ is a sequence of points in $\mathbb{R}^N$ at a unit distance $ \tilde{d}$ from each other (i.e. $ \tilde{d}(x_i,x_{i+1})=1$ for each $i \in \mathbb{N}$). By the triangle inequality, we have the estimation $ \tilde{d}(x_1,x_{n+1}) \leq n$ for each $n \in \mathbb{N}$. However, one can use (\ref{better}) with $\theta(t)=\ln(1+t)$ for all $t \geq 0$, to obtain the better estimate: $ \tilde{d}(x_1,x_{n+1}) \leq \ln(1+n(e-1))$ for $n \in \mathbb{N}$. 
\end{example}

\subsection{$s$-relaxed triangle inequalities and triangle $\om$-inequalities in some b-metrics}

\noindent
In a similar way, the triangle $\om$-inequality can be used to provide sharp inequalities for b-metrics on a set $X$ that are compositions of some bijective function $\varphi:\mathbb{R}_+ \to \mathbb{R}_+$ with $\varphi(0)=0$, and a metric on $X$. Such b-metrics are also $\om$-metrics, with $\om(u,v)=\varphi(\varphi^{-1}(u)+\varphi^{-1}(v))$ as seen in  Example \ref{refev}. 
The triangle inequality in b-metric spaces $d(x_1,x_3) \leq s[d(x_1,x_2)+d(x_2,x_3)]$ and the resulting polygon inequality $d(x_1,x_{n+1}) \leq s^{f(n)}\sum_{i=1}^n d(x_i,x_{i+1})$ for points $x_0,x_1,\ldots,x_{n+1} \in X$ are not as sharp as the triangle $\om$-inequality
$d(x_1,x_3) \leq \varphi(\varphi^{-1}(d(x_1,x_2))+\varphi^{-1}(d(x_2,x_3)))$ and
$d(x_1,x_{n+1}) \leq \varphi\left(\sum_{i=1}^n \varphi^{-1}(d(x_i,x_{i+1}))\right)$. 
\\
\\
Consider for example, on $X=\mathbb{R}^2$, the area $d(x,y)=\frac{\pi}{4}\|x-y\|^2$ of circles of 
diameter the segment between $x,y \in X$. We have that $d(x,y)=\varphi(\|x-y\|)$, where $\varphi$ defined by $\varphi(t)=\frac{\pi}{4}t^2$ $\forall t \geq 0$, is a bijection on $\mathbb{R}_+$ such that $\varphi(0)=0$. $(X,d,s)$ is a b-metric space with $s=2$. In fact, $2$ is the optimal value for $s$ in the sense that for collinear points $x_1,x_2,x_3$, with $x_3= x_2+\sqrt{2}=x_1+2\sqrt{2}$, $d(x_1,x_3)=2[d(x_1,x_2)+d(x_2,x_3)]$. Furthermore, $d$ is also an $\om$-metric, with $\om(u,v)=\varphi(\varphi^{-1}(u)+\varphi^{-1}(v))=(\sqrt{u}+\sqrt{v})^2$.
\\
The relaxed triangle inequality $d(x_1,x_3) \leq 2[d(x_1,x_2)+d(x_2,x_3)]$ $~\forall x,y,z \in X$, provides useful 
information that given any triangle with vertices $x_1,x_2,x_3$, the area of the circle with diameter 
a side of the triangle is not greater than two times the sum of the areas of the circles with
the 2 other sides as diameter, with equality occuring when $x_1,x_2,x_3$ are on the same segment, with one
point at a distance $\sqrt{2}$ from each endpoint (see Figure \ref{hallowed}).
\\
However, the triangle $\om$-inequality $d(x_1,x_3) \leq \left(\sqrt{d(x_1,x_2)}+\sqrt{d(x_2,x_3)}\right)^2$ is sharper than the $2$-relaxed triangle inequality and the difference $2(u+v)-(\sqrt{u}+\sqrt{v})^2$ is pictured by the curve $z=2(x+y)-(\sqrt{x}+\sqrt{y})^2$ in Figure \ref{TGR} below.

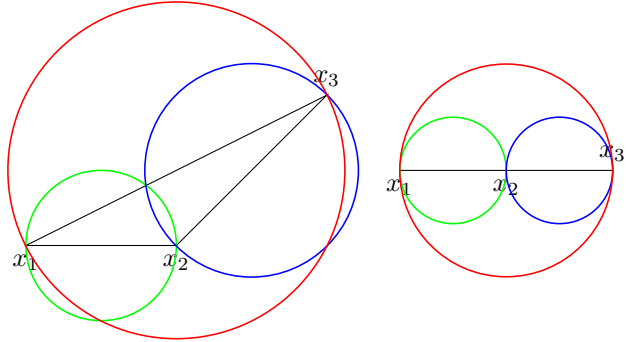
\begin{figure}[h!]\label{hallowed}
\centering
\begin{minipage}{.4\textwidth}
\begin{tikzpicture}
\draw (0,0) node[anchor=north]{$x_1$}
  -- (2,0) node[anchor=north]{$x_2$}
  -- (4,2) node[anchor=south]{$x_3$}
  -- cycle;
\path[draw=green, semithick] (1, 0) circle [radius=1];
\path[draw=blue, semithick] (3, 1) circle [radius=1.42];
\path[draw=red, semithick] (2, 1) circle [radius=2.24];
\end{tikzpicture}
\end{minipage}
\begin{minipage}{.4\textwidth}
\begin{tikzpicture}
\draw (0,0) node[anchor=north]{$x_1$}
  -- (1.415,0) node[anchor=north]{$x_2$}
  -- (2.83,0) node[anchor=south]{$x_3$}
  -- cycle;
\path[draw=green, semithick] (0.708, 0) circle [radius=0.708];
\path[draw=blue, semithick] (2.123, 0) circle [radius=0.708];
\path[draw=red, semithick] (1.415, 0) circle [radius=1.415];
\end{tikzpicture}
\end{minipage}
\caption{Circles of diameter the segment $x_1x_3$ (red), $x_1x_2$ (green) and $x_2x_3$ (blue). The area of the circle in red is not greater than 2 times the sum of the areas of the 2 other circles (left figure), with equality when $x_1,x_2,x_3$ are collinear and $d(x_1,x_2)=d(x_2,x_3)=\sqrt{2}$ (right figure).}
\end{figure}

\begin{figure}[h!]\label{TGR}
\centering
\begin{tikzpicture}
    \begin{axis}
    [
    axis lines=center,
    enlargelimits,
    tick align=inside,
    domain=-5:100,
    samples=200, 
    minor tick num=5,
	colormap/cool,
    ]
    \addplot3 [
	color=blue,
	domain=-5:200,
    samples=50,
	] { 2*(x+y)-(sqrt(x) + sqrt(y))^2 };
    \end{axis}
\end{tikzpicture}
\caption{$z=2(x+y)-(\sqrt{x}+\sqrt{y})^2$.}
\end{figure}
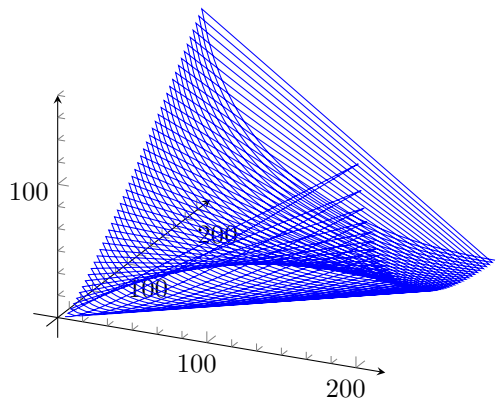

\subsection{$s$-constrained triangle inequality and infinite symmetric matrices}
We note that if the constant $s$ in Definition \ref{(b-Metric)} is taken to be less than $1$ ($s<1$), the space $X$ in the triplet $(X,d,s)$ is a singleton. 
On the other hand, the discrete metric on any non-empty set $X$ satisfies $(d_1)$, $(d_2)$ and the $s$-constrained triangle inequality $d(x,z) \leq s[d(x,y)+d(y,z)]$ for all distinct $x,y,z \in X$, with $s=\frac{1}{2}$, although any infinite sequence $\{x_n\}_{n \in \mathbb{N}}$ of distinct points in the discrete metric is such that $\sum_{i=1}^\infty d(x_i,x_{i+1})=\infty$. We obtain the following proposition which implies that in some sense, metric spaces satisfying the $s$-constrained triangle inequality in every non-degenerate triangle are not ``rich" \footnote{Such metric spaces are not rich in the sense that no infinite sequence of distinct points is a Cauchy sequence as noted in Remark \ref{GodisG}.}
%
%
\begin{proposition}\label{thereisG}
There is no infinite sequence $\{x_n\}$ of distinct points in a metric space $(X,d)$, satisfying for some $s \in [0,1)$ the $s$-constrained triangle inequality, and such that $\sum_{i=1}^\infty d(x_i,x_{i+1})$ converges. 
\end{proposition}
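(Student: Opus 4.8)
The plan is to argue by contradiction. Suppose such a sequence $\{x_n\}$ exists; write $a_i:=d(x_i,x_{i+1})$ and $T_n:=\sum_{i\ge n}a_i$, so that, since $\sum_i a_i$ converges, both $a_i\to 0$ and $T_n\to 0$ as $n\to\infty$.

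First I would use only the ordinary triangle inequality of the metric $d$ to note that $\{x_n\}$ is a Cauchy sequence: for $n<m$ one has $d(x_n,x_m)\le\sum_{i=n}^{m-1}a_i\le T_n\to 0$. Consequently, for each fixed index $k$, the real sequence $\bigl(d(x_k,x_n)\bigr)_n$ is Cauchy, since $|d(x_k,x_n)-d(x_k,x_m)|\le d(x_n,x_m)$; hence it converges to some $b_k\ge 0$.

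The heart of the argument is then to feed in the $s$-constrained triangle inequality to force $b_k=0$. For every $n>k$ the three points $x_k,x_n,x_{n+1}$ are pairwise distinct, so $d(x_k,x_{n+1})\le s\bigl(d(x_k,x_n)+d(x_n,x_{n+1})\bigr)=s\bigl(d(x_k,x_n)+a_n\bigr)$; letting $n\to\infty$ gives $b_k\le s\,b_k$, and since $0\le s<1$ this yields $b_k=0$, i.e. $d(x_k,x_n)\to 0$ for every fixed $k$. Applying this with $k=1$ and with $k=2$ and invoking the triangle inequality once more, $d(x_1,x_2)\le d(x_1,x_n)+d(x_n,x_2)\to b_1+b_2=0$, so $d(x_1,x_2)=0$ and $x_1=x_2$, contradicting the hypothesis that the $x_n$ are distinct.

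The manipulations with the convergent series and the triangle inequality are routine; the only points that require care — the ``main obstacle'' — are making sure that the triple to which the $s$-constrained inequality is applied consists of genuinely distinct points (handled by the distinctness of the $x_n$ together with the restriction $n>k$) and that the limit $b_k$ exists before one passes to the limit in the inequality (handled by the Cauchy observation of the second step). Note that the polygon-inequality machinery of Lemma \ref{state} is not needed for this proof.
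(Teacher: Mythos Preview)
Your argument is correct, and it takes a genuinely different route from the paper. The paper invokes a Suzuki-type polygon inequality, obtaining $d(x_i,x_n)\le s^{g(n-i)}\sum_{j=i}^{n-1}d(x_j,x_{j+1})$ with $g(m)=[\log_2(m+1)]+1$; since $s<1$ and the partial sums are bounded by the convergent series, the factor $s^{g(n-i)}\to 0$ forces $d(x_i,x_n)\to 0$ directly, after which uniqueness of limits in a metric space yields the contradiction. You instead first use only the \emph{ordinary} triangle inequality to show that each real sequence $(d(x_k,x_n))_n$ is Cauchy and hence has a limit $b_k$, and then apply the $s$-constrained inequality just once, to the triple $(x_k,x_n,x_{n+1})$, to deduce $b_k\le s\,b_k$ and hence $b_k=0$. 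Your approach is more elementary and self-contained (no external lemma, no iterated polygon bound), while the paper's approach yields an explicit quantitative estimate on $d(x_i,x_n)$ that is reused later in the adaptation to Proposition~\ref{thereisGod}. Your care in checking distinctness of the triple and existence of $b_k$ before passing to the limit is exactly what is needed.
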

\begin{proof}
Following the proof of Lemma 5 of \cite{suzuki}, we obtain that for each $i \in \mathbb{N}$,
\begin{equation}\label{atlasti}
d(x_i, x_{n}) \leq s^{g(n-i)} \displaystyle \sum_{j=i}^{n-1} d(x_j, x_{j+1}) \leq s^{g(n-i)} \displaystyle \sum_{j=1}^{n-1} d(x_j, x_{j+1}), ~~\forall n \in \mathbb{N}, \end{equation}
where $g(n) = [\log_2(n+1)]+1$ (with [t] denoting the maximum integer not exceeding a real number $t$).  Thus $\lim_{n \to \infty}d(x_i,x_n)=0$ for each $i \in \mathbb{N}$ which means that each term is a limit of $\{x_n\}$, an absurdity.
\end{proof}
\noindent
From Proposition \ref{thereisG}, we note the following:
\begin{remark}[Characteristics of infinite metric spaces satisfying a $s$-constrained triangle inequality in non-degenerate triangles]\label{GodisG}
In an infinite metric space $(X,d)$ (i.e. a metric space with an infinite number of points) satisfying the $s$-constrained triangle inequality for non-degenerate triangles (i.e. $d(x,y) \leq s[d(x,z)+d(z,y)]$ for $x,y,z$ distinct): 
\begin{enumerate}
\item Any infinite sequence $\{x_n\}$ of distinct points must be such that the sequence $\sum_{i=1}^\infty d(x_i,x_{i+1})=\infty$;

\item Any infinite sequence $\{x_n\}$ of distinct points cannot be a Cauchy sequence. Equivalently, any Cauchy sequence must be finite, hence convergent.

\item $X$ is a complete metric space.

\item The longest path between two distinct points is of infinite length, i.e., for any $x,y \in X$ distinct,
$$\sup\left\{\sum_{i=1}^n d(x_{i-1},x_{i}): x_0=x,~ x_n=y,~  x_i \neq x_j \mbox{ for } i \neq j \right\} =\infty.$$
\end{enumerate}
\end{remark}
\noindent
It turns out that the proof of Proposition \ref{thereisG} can be adapted to obtain a parallel result on the maximization of ``triangular quotients" \footnote{Triangular quotients refer to the quotients $\frac{\alpha_{ij}}{\alpha_{ik}+\alpha_{kj}}$, $i\neq k \neq j$, in Proposition \ref{thereisGod}.} in some infinite symmetric matrices:
\begin{proposition}\label{thereisGod}
Let $A=(\alpha_{ij})_{i,j \geq 1}$ be a symmetric infinite matrix with distinct positive real numbers as non-diagonal entries, such that the sum $\sum_{i=1}^\infty \alpha_{i,i+1}$ of the superdiagonal entries is finite, and the triangle inequality $\alpha_{ij} \leq \alpha_{ik}+\alpha_{kj}$ is satisfied for all $i,j,k$. Then
$\displaystyle \sup_{\substack{i,j,k \in \mathbb{N} \\ i \neq k \neq j}} \frac{\alpha_{ij}}{\alpha_{ik}+\alpha_{kj}} =1$.  
\end{proposition}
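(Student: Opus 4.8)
The plan is to split the statement into the trivial bound $\displaystyle\sup_{i\neq k\neq j}\frac{\alpha_{ij}}{\alpha_{ik}+\alpha_{kj}}\le 1$ and the reverse inequality, the latter proved by contradiction as a direct reduction to Proposition~\ref{thereisG}. The upper bound is immediate: for every admissible triple $i\neq k\neq j$ the hypothesised inequality $\alpha_{ij}\le\alpha_{ik}+\alpha_{kj}$ (applied also in the degenerate case $i=j$, where it reads $\alpha_{ii}\le 2\alpha_{ik}$) gives $\frac{\alpha_{ij}}{\alpha_{ik}+\alpha_{kj}}\le 1$, so the supremum is at most $1$.

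For the reverse inequality I would first turn the matrix into a metric space. Put $X=\mathbb{N}$ and define $d(i,j)=\alpha_{ij}$ for $i\neq j$ and $d(i,i)=0$. Positivity of the off-diagonal entries makes $d$ separate points, symmetry of $A$ makes $d$ symmetric, and the triangle inequality for $A$ — which needs to be checked only in the few cases where two of the three indices coincide, each of which is immediate from $\alpha_{ii}\ge 0$ and $\alpha_{ii}\le 2\alpha_{ik}$ — makes $d$ satisfy the triangle inequality. Thus $(X,d)$ is a genuine metric space, and $x_n:=n$ is an infinite sequence of distinct points with $\sum_{n\ge 1}d(x_n,x_{n+1})=\sum_{n\ge 1}\alpha_{n,n+1}<\infty$ by hypothesis.

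Now suppose, for contradiction, that the supremum equals some $\sigma<1$ (note $\sigma\ge 0$ automatically, since the entries are positive). Then $\frac{\alpha_{ij}}{\alpha_{ik}+\alpha_{kj}}\le\sigma$ for every triple with $i\neq k\neq j$; in particular, for all pairwise distinct $x,y,z\in X$ this reads $d(x,z)\le\sigma[d(x,y)+d(y,z)]$, i.e. $d$ satisfies the $\sigma$-constrained triangle inequality with $\sigma\in[0,1)$. Proposition~\ref{thereisG} then rules out the existence of an infinite sequence of distinct points in $X$ whose consecutive distances have a convergent sum — contradicting the sequence $x_n=n$ constructed above. Hence the supremum cannot be strictly less than $1$, and combined with the upper bound it equals $1$.

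I expect the only delicate point to be the bookkeeping in this reduction: checking that the convention $d(i,i)=0$ is consistent with the blanket triangle inequality imposed on $A$, and observing that restricting the ``$i\neq k\neq j$'' supremum to pairwise distinct triples loses nothing here, since Proposition~\ref{thereisG} only invokes the constrained triangle inequality for distinct points. Once the metric is set up, the result is essentially a corollary of the earlier proposition.
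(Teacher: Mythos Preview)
Your proof is correct and follows essentially the approach the paper indicates: the paper does not write out a separate argument for this proposition but only remarks that the proof of Proposition~\ref{thereisG} ``can be adapted'', and your reduction---building the metric $d(i,j)=\alpha_{ij}$ on $X=\mathbb{N}$, observing that the sequence $x_n=n$ has summable consecutive distances, and invoking Proposition~\ref{thereisG} as a black box to rule out any $s$-constrained triangle inequality with $s<1$---is precisely that adaptation. Your bookkeeping on the diagonal (setting $d(i,i)=0$ and checking the degenerate triangle cases) and your observation that Proposition~\ref{thereisG} only needs the constrained inequality on pairwise distinct triples are both sound.
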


\begin{example}
Consider the infinite symmetric matrix $A=(\alpha_{ij})_{i,j \geq 1}$ such that for some $r \in (0,1)$,
\begin{equation*}
\alpha_{ij}=\left\{
\begin{array}{lll}
~~1, & \mbox{ if } (i,j)=(1,2)
\\
~~0, & \mbox{ if } i=j
\\
\sqrt{\left(\displaystyle \sum_{q=\floor*{i/2}}^{\floor*{j/2}-1} (-1)^q r^{2q}   \right)^2+\left( \displaystyle \sum_{q=\ceil*{i/2}}^{\ceil*{j/2}-1} (-1)^q r^{2q-1}  \right)^2 }, & \mbox{ if } 2 \leq i <j.
\end{array}
\right.
\end{equation*}
\noindent
Here, $\floor*{\cdot}$ and $\ceil*{\cdot}$ denote the floor function and the ceiling function respectively.\\
We have for all $i,j$ that $\alpha_{ij}=\|x_i-x_j\|$, where $\{x_n\}$ is the sequence in the Euclidean space $\mathbb{R}^2$ such that $x_1=(0,0)$, $x_2=(1,0)$ and for all $i \geq 3$, 
$$x_i=\left( \displaystyle \sum_{q=0}^{\floor*{i/2}-1} (-1)^q r^{2q} , \displaystyle \sum_{q=0}^{\ceil*{i/2}-1} (-1)^q r^{2q-1}\right).$$
From Proposition \ref{thereisG} and \ref{thereisGod}, $\displaystyle \sup_{\substack{i,j,k \in \mathbb{N} \\ i \neq k \neq j}} \frac{\alpha_{ij}}{\alpha_{ik}+\alpha_{kj}} =1$.  

\begin{figure}[h!]
\centering
\begin{tikzpicture}[scale=8]
\draw node[anchor=south] {$x_1$} (0,0) -- (1,0) node[anchor=south] {$x_2$};

\draw (1,0) --  (1,-0.5) node[anchor=west] {$x_3$} ;
\draw (1,-0.5) --  (0.75,-0.5) node[anchor=east] {$x_4$};
\draw (0.75,-0.5) -- (0.75,-0.375) node[anchor=south] {$x_5$};
\draw (0.75,-0.375) -- (0.8125,-0.375) node[anchor=south] {$x_6$};
\draw (0.8125,-0.375) -- (0.8125,-0.40625) node[anchor=west] {$x_7$};
\draw (0.8125,-0.40625) -- (0.796875,-0.40625) node[anchor=north] {$x_8$};
\draw (0.796875,-0.40625) -- (0.796875,-0.3984375);
\draw (0.796875,-0.3984375) -- (0.80078125,-0.3984375);
\draw (0.80078125,-0.3984375) -- (0.80078125,-0.400390625);
\draw[fill] (0,0) circle (.1pt);  
\draw[fill] (1,0) circle (.1pt);  
\draw[fill] (1,-0.5) circle (.1pt);  
\draw[fill] (0.75,-0.5) circle (.1pt);  
\draw[fill] (0.75,-0.375) circle (.1pt); 
\draw[fill] (0.8125,-0.375) circle (.1pt); 
\draw[fill] (0.8125,-0.40625) circle (.1pt); 
\draw[fill] (0.796875,-0.40625) circle (.1pt); 
\end{tikzpicture}
\caption{The terms of the sequence $\{x_n\}$ in the case $r=\frac{1}{2}$, are the endpoints of the segments, starting from the least, clockwise.}
\end{figure}
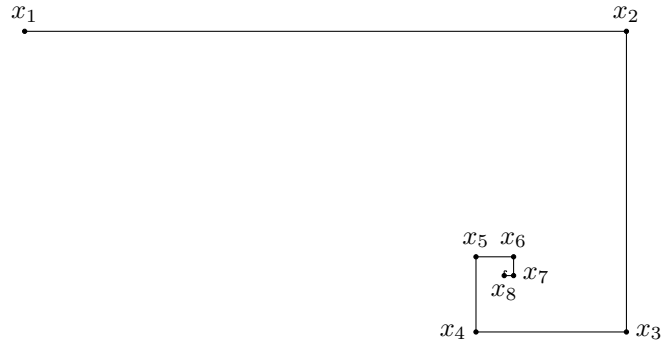
\end{example}

\section*{Declarations}

\section*{Availability of data and materials}
Not Applicable.

\section*{Competing interests}
The authors declare that they have no competing interests.

\section*{Funding}
This research did not receive any specific grant from funding agencies in the public, commercial, or not-for-profit sectors.

\section*{Authors' contributions}
All authors contributed meaningfully to this research work, and read and approved the final manuscript.

\section*{Acknowledgements}
None.






\bibliography{Another2}

\end{document}